%%%%%%%%%%%%%%%%%%%%%%%% file template.tex
\documentclass{siamltex}

\usepackage{graphicx}%
\usepackage{multirow}%
\usepackage{amsmath,amssymb,amsfonts}%
\usepackage{mathrsfs}%
\usepackage[title]{appendix}%
\usepackage{xcolor}%
\usepackage{textcomp}%
\usepackage{manyfoot}%
\usepackage{booktabs}%
\usepackage{algorithm}%
\usepackage{algorithmicx}%
\usepackage{algpseudocode}%
\usepackage{listings}%
%%%%
\usepackage{amsfonts,latexsym}
\usepackage{epsfig}
\usepackage{multirow}
\usepackage{amsmath}
\usepackage{graphicx}
\usepackage{caption}
\usepackage{subcaption}
\usepackage{amsmath}
\usepackage{hyperref}
\usepackage{parskip}
\usepackage{color}
\usepackage{ulem}
\usepackage{epstopdf} 
\usepackage{empheq}
\usepackage[utf8]{inputenc}

% \newtheorem{remar}{Remark}
% \newtheorem{corol}{Corollary}
% \newtheorem{theo}{Theorem}

%\newcommand{\R}{\hbox{ I\hskip -2pt R}}

%------------------------------
\usepackage{amsfonts,latexsym}
\usepackage{epsfig}
\usepackage{graphicx}
\usepackage{amsmath}
\usepackage{color}
\usepackage{mathptmx}  
\usepackage{algpseudocode}%
\usepackage{listings}%

%*******************************
%------------------------------

%\newcommand{\bL}{\mathcal{L}}

%
%\renewcommand{\algorithmicrequire}{\textbf{Input:}}
%\renewcommand{\algorithmicensure}{\textbf{Output:}}
%\newcommand{\Input}{\REQUIRE}
%\newcommand{\Output}{\ENSURE}
% \parindent 0pt
% \textwidth 16cm 
%\textheight 20cm
% \evensidemargin -0.1cm
% \oddsidemargin -0.1cm

%%%%%%%%%%%%%%%%%%%%%%%%%%%%%%%%%%%%%%%%%%%%
% \newtheorem{proposition}{Proposition}[section]
% \newtheorem{theorem}{Theorem}[section]
% \newtheorem{definition}{Definition}[section]
% \newtheorem{lemma}{Lemma}[section
%
%
%%%%%%%%%%%%%%%%%%%%%%%%%%%%%%%%%%%%%%%%%%%%%%%%%%%%%%%%%
%\parindent 0.5cm
%\textwidth 15cm
%\textheight 21cm
%\evensidemargin -0.1cm
%\oddsidemargin -0.1cm
%
%
%%moore pensore inverse
%\makeatletter
%\newcommand{\ssymbol}[1]{^{\@fnsymbol{#1}}}
%\makeatother
%
%

% \parindent 0pt
% \textwidth 16cm 
%\textheight 20cm
% \evensidemargin -0.1cm
% \oddsidemargin -0.1cm
%

\title{Low-Rank Regularized Convex-Non-Convex Problems for Image Segmentation or
	Completion }
	%\subtitle{Do you have a subtitle?\\ If so, writ e it here}
	
	%\titlerunning{Short form of title}        % if too long for running head
	
	\author{M. El Guide\thanks{Mohammed VI Polytechnic University, Rabat, Morocco. email{mohamed.elguide@um6p.ma}}               
		\and 
		A. El Hachimi\footnotemark[3] \thanks{The UM6P Vanguard center, Mohammed VI Polytechnic University, Rabat, Morocco. email{anaselhachimi1997@gmail.com}  }
		\and 
		K. Jbilou \thanks{Universit\'e du Littoral, C\^ote d'Opale, batiment H. Poincarr\'e,
		50 rue F. Buisson,  F-62280 Calais Cedex, France. email{jbilou@lmpa.univ-littoral.fr}} 
		\and L. Reichel \thanks{Department of Mathematical Sciences, Kent State University, Kent 44242, Ohio USA. email{reichel@math.kent.edu}}
	}

	\date{}
	% The correct dates will be entered by the editor
	
	\begin{document}
		\maketitle

	\begin{abstract}{This work proposes a novel convex-non-convex formulation of the image 
		segmentation and the image completion problems. The proposed approach is based on the 
		minimization of a functional involving two distinct regularization terms: one promotes 
		low-rank structure in the solution, while the other one enforces smoothness. To solve the
		resulting optimization problem, we employ the alternating direction method of multipliers
		(ADMM). A detailed convergence analysis of the algorithm is provided, and the performance 
		of the methods is demonstrated through a series of numerical experiments.}
	\end{abstract}
	
\begin{keywords} Convex-non-convex problem, image completion, image segmentation, low-rank approximation,  matrix nuclear normregularization. 
\end{keywords}
	
%	[AMS Classification]{3MSC 65F \and MSC 15A}
	
%	\subclass{MSC 65F \and MSC 15A }

\section{Introduction}
Image segmentation and completion are fundamental computational tasks in computer vision 
and data analysis with many applications in medical and hyperspectral imaging, as well as
in remote sensing. These tasks arise when reconstructing or segmenting images with missing
or occluded pixels. Existing approaches, including variational methods \cite{Rudin} and 
deep learning techniques \cite{Long} have been applied to solve these tasks under specific 
circumstances. However, these approaches have limitations, such as sensitivity of the 
computed results to noise in the given data and high computational cost. 

Recovery of meaningful information from corrupted or incomplete data is a long-standing 
problem in image processing. For example, in medical imaging, modalities such as Magnetic 
Resonance Imaging (MRI) and Computerized Tomography (CT) frequently yield data that suffer 
from noise contamination or incomplete acquisition due to hardware limitations or patient 
constraints \cite{MRI_compression,MRI_2}. Similarly, hyperspectral imaging, known for its
ability to capture detailed spectral information, often suffers from missing data values 
because of sensor malfunction \cite{HI_denoising2,HI_denoising1}. These difficulties make
image recovery and segmentation indispensable for accurate data analysis, diagnosis, and 
decision-making. Traditional approaches to deal with these issues include the use of i) 
regularization, e.g., Total Variation (TV) regularization \cite{Amir_denoising,Benchatou,bentbib1},
ii) low-rank matrix or tensor factorization \cite{Completion, HI_denoising2}, and iii) 
variational formulations such as the Mumford-Shah model \cite{Mumford--Shah model}. The 
latter has inspired many variations, including the two-stage Mumford-Shah model by Cai et 
al. \cite{two-stage Mumford--Shah model} and the Convex-Non-Convex (CNC) model proposed 
by Chan et al. \cite{CNC}. Despite their effectiveness, these approaches face limitations, 
such as high computational complexity, sensitivity to parameter settings, and difficulties
in preserving fine details in high-dimensional data.

Image segmentation involves partitioning an image into meaningful regions based on certain
properties of the image such as intensity, texture, or spectral information. Classical
techniques, including active contour methods \cite{T.Chan} and region-growing algorithms 
\cite{Espindola}, have been studied extensively. Variational approaches, such as the 
Mumford-Shah model \cite{Mumford--Shah model} and its convex variants 
\cite{two-stage Mumford--Shah model}, have advanced the field by
providing robust mathematical formulations. More recently, the hybrid convex-non-convex 
framework has gained prominence due to its ability to balance computational feasibility 
with modeling flexibility; see Chan et al. \cite{CNC}. We remark that traditional convex 
formulations, such as those based on the nuclear norm, are robust but can be overly 
restrictive and may give suboptimal solutions. Conversely, purely non-convex approaches, 
while offering greater flexibility, often lack stability and scalability; see, e.g., Lu et
al. \cite{Lu}.

To overcome these limitations, this paper introduces a hybrid convex-non-convex 
optimization framework for image segmentation and completion. By combining the robustness
of convex optimization and the flexibility of non-convex regularization, the proposed 
methods strike a balance between accuracy and computational efficiency. The methods use 
regularization that promotes matrix solutions of low rank. This has emerged as a powerful 
technique for image recovery as well as for image segmentation. Low-rank matrix solutions 
can be represented with reduced degrees of freedom; see Bell et al. \cite{Completion}. We
seek to determine low-rank matrix solutions by regularization with the nuclear matrix 
norm, which is a convex surrogate of the matrix rank function 
\cite{Completion_nuclear_norm}. However, while the 
use of the nuclear norm ensures mathematical tractability, the solution of minimization 
problems that involve the nuclear matrix norm can be very demanding computationally for 
large-scale problems. To address this issue, the Alternating Direction Method of 
Multipliers (ADMM) is frequently employed; see, e.g., \cite{Bu,ADMM1,LPS,ADMM2}. The 
ability of ADMM to decompose complex problems into simple subproblems has made it a 
popular method for solving a variety of optimization problems in image processing. 
Computed examples presented in Section \ref{sec 4} show the proposed methods to be 
competitive with available methods both in terms of accuracy and computing time. 

We formulate image completion and image segmentation within a unified framework.
Let $B\in\mathbb{R}^{n_1\times n_2}$ represent an observed image. For image completion,
the matrix $B$ has zero or blank entries at locations for which no data is available, and
for image segmentation the matrix $B$ represents a fully observed noisy image. The desired
solution $U\in\mathbb{R}^{n_1\times n_2}$ of many image processing problems is a piecewise 
smooth image. Therefore, many solution methods use regularization that promotes piecewise 
smoothness of the computed solution. For instance, this can be achieved with Total 
Variation (TV) regularization; see \cite{Benchatou,elha2,elha1}. We obtain a 
minimization problem of the form
\begin{equation}\label{main2}
	\min_{U\in\mathbb{R}^{n_1\times n_2}}\left\{\dfrac{\lambda}{2}
	\left\Vert U - B\right\Vert_F^2 + R(U)\right\},
\end{equation}
where $R:\mathbb{R}^{n_1\times n_2}\to\mathbb{R}$ denotes a total variation regularization
functional, $\|\cdot\|_F$ stands for the Frobenius norm, and $\lambda>0$ is a 
regularization parameter that balances the influence of the terms in \eqref{main2} on the 
solution. Chan et al. \cite{CNC} proposed the use of the total variation functional
\[
R(U)=\sum_{i=1}^{n_1}\sum_{j=1}^{n_2}\phi(\|\left(\nabla U\right)_{ij}\|_F,a,T),
\]
where
\begin{equation}\label{phifun}
\phi\left( t;T,a \right)=\left\lbrace\begin{array}{ll}
\phi_1\left(t;T,a\right)= \dfrac{a(T_2-T)t^2}{2T}, & t\in [0,T),  \\[2mm]
\phi_2\left(t;T,a\right)= -\dfrac{a}{2}t^2 + aT_2 t - \dfrac{aTT_2}{2}, & t\in [T,T_2),
\\[3mm]
\phi_3\left(t;T,a\right)=\dfrac{aT_2 \left(T_2 - T\right)}{2},~~ & t \in [T_2,\infty).
\end{array} \right.
\end{equation}
The parameters $T>0$ and $a>0$ have to be chosen by a user: $a$ is used to tune the degree
of non-convexity of the regularization functional as discussed below, while $T$ is chosen 
to emphasize interesting features of the image. In image segmentation, $T$ is used to 
indicate which entries of the image should not be considered boundary points of the 
segmented regions of the images. The chosen form behaves like a quadratic smoothing term 
for small gradients but transitions to a flat response for large gradients, thereby 
preserving edges, as proposed by Chan et al. \cite{CNC}. We comment on the choices of $a$,
$T$, and $T_2$ below; see also Chan et al. \cite{CNC} for a discussion on how to choose 
these parameters.

Following Chan et al. \cite{CNC}, we define a gradient-type operator $\nabla U$ by 
\[
\left(\nabla U\right)_{ij} =[\left(D_1 U\right)_{ij}, \left(D_2 U\right)_{ij}]^T,
\]
where the superscript $^T$ stands for transposition,
\[
D_1 U=U C_1,\quad D_2 U=C_2 U,
\]
and $C_1$ and $C_2$ are the circulant matrices
\[
C_1=\begin{bmatrix}
	-1 & 0 & 0 & \ldots & 1\\
	1 & -1 & 0 & \ldots & 0\\
	\vdots & \ddots & \ddots  & \ddots & \vdots\\
	0 & \ldots & 1 & -1 & 0\\
	0 & \ldots & 0 & 1 & -1\\
\end{bmatrix}\in \mathbb{R}^{n_2 \times n_2}, \quad 
C_2=\begin{bmatrix}
	-1 & 1 & 0 & \ldots & 0\\
	0 & -1 & 1 & \ldots & 0\\
	\vdots & \ddots & \ddots  & \ddots & \vdots\\
	0 & \ldots & 0 & -1 & 1\\
	1 & \ldots & 0 & 0 & -1\\
\end{bmatrix}\in \mathbb{R}^{n_1 \times n_1}.
\]

The function $\phi$ in \eqref{phifun} satisfies: 
\begin{enumerate}
	\item $\phi$ is continuously differentiable for $t\in \mathbb{R}_+$.
	\item $\phi$ is twice continuously differentiable for 
	$t\in \mathbb{R}_+\backslash \{T,T_2 \}$.
	\item $\phi$ is convex and monotonically increasing for $t\in [0,T)$.
	\item $\phi$ concave and monotonically non-decreasing for $t\in [T,T_2)$.
	\item $\phi$ is constant for $t\in [T_2,+\infty)$.
	\item $\inf_{t\in \mathbb{R}_+\backslash \{T,T_2\}} \phi''=-a$.
\end{enumerate}
\vskip3pt

We would like to determine an approximate solution of \eqref{main2} of low rank. Recovery
of data so that the computed solution or correction of an available approximate solution 
are of low rank has received considerable attention; see, e.g., 
\cite{HI_denoising2,MRI_compression,LMRS,Rudin,HI_denoising1}. Reasons for this include 
that the imposition of a low-rank constraint may be meaningful in the model considered,
may yield a computed solution of higher quality, or may be easier to interpret. We 
therefore modify the minimization problem \eqref{main2} to obtain
\begin{equation}\label{main3} 
	\min_{U\in\mathbb{R}^{n_1\times n_2}}\left\{ \dfrac{\lambda}{2}
	\left\Vert U - B\right\Vert_F^2 + R(U)\right\}, \text{~~such~that~~~rank}(U)\leq r,
\end{equation}
where $r$ is user-specified positive integer. 

However, the solution of the minimization problem \eqref{main3} is NP-hard. It therefore 
is impractical to solve \eqref{main3}, except when all matrices involved are very small. 
To circumvent this difficulty, we replace the rank function by its convexification, which 
is the matrix nuclear norm; see \cite{Completion,MFazel,Completion_nuclear_norm} for 
discussions on this replacement. We solve the minimization so obtained by ADMM.

In summary, the contributions of this work are: (i) a new unified model for image 
completion and segmentation that promotes both low-rank structure and piecewise 
smoothness, (ii) an efficient ADMM-based algorithm with proven convergence properties, and
(iii) a comprehensive experimental evaluation for a variety of images (grayscale, color, 
hyperspectral) that demonstrate improved accuracy and speed compared to existing methods.

This paper is organized as follows. Section \ref{sec 2} discusses the solution of the 
optimization problem \eqref{main3} with the rank function replaced by the nuclear norm, 
and Section \ref{sec 3} is concerned with the convergence properties of the proposed 
solution method. Numerical examples that illustrate the performance of the solution 
methods are reported in Section \ref{sec 4}. Concluding remarks can be found in Section 
\ref{sec 5}. 

It is a pleasure to dedicate this paper to \AA ke Bj\"orck and Lars Eld\'en, who made
profound contributions to Numerical Linear Algebra and pioneered the development of 
numerical methods for the solution linear discrete ill-posed problems; see, e.g., 
\cite{Bj1,Bj2,Bj3,Bj4,El1,El2,El3,El4}.

\section{The solution method}\label{sec 2}
This section describes the proposed solution method. We convexify the rank-regularized 
problem \eqref{main3} by using the nuclear norm surrogate, and derive an efficient 
algorithm based on ADMM to solve the resulting problem. 

Let the matrix $U\in\mathbb{R}^{n_1\times n_2}$ have the singular values
$\sigma_1\geq\sigma_2\geq\ldots\geq\sigma_{\min\{n_1, n_2\}}\geq 0$. The nuclear norm of
$U$ is defined as
\[
\left\Vert U\right\Vert_* = \sum_{i=1}^{\min\{n_1, n_2\}} \sigma_i.
\]
The matrix nuclear norm is continuous, convex, and coercive. Its subdifferential is given
by 
\[
\partial \left\Vert X \right\Vert_*= \left\{  UV^T+W:\; W\in\mathbb{R}^{n_1\times n_2},\;
U^TW=0,\; W^TV=0,\; \left\Vert W \right\Vert_2\leq 1 \right\}.
\]
Here and throughout this paper $\|\cdot\|_2$ denotes the spectral matrix norm or the
Euclidean vector norm.

Replacing the rank constraint in the optimization problem \eqref{main3} by the matrix 
nuclear norm gives the minimization problem 
\begin{equation}\label{main4}
	\min_{U\in\mathbb{R}^{n_1\times n_2}}\left\{\dfrac{\lambda}{2}
	\left\Vert U- B\right\Vert_F^2 + R(U)+ \left\Vert U \right\Vert_*\right\}.
\end{equation}
We introduce some auxiliary variables to obtain an equivalent minimization problem that 
we will solve:
\begin{equation}\label{main5}
\min_{U,Z,M} \left\{\dfrac{\lambda}{2}\left\Vert U - B\right\Vert_F^2 + 
\sum_{i=1}^{n_1}\sum_{j=1}^{n_2} \phi\left( \left\Vert M_{ij}\right\Vert_2; T,a \right) +
\left\Vert Z\right\Vert_*\right\},
\end{equation}
such that $DU=M$ and $Z=U$. Here, $M$ captures the discrete gradient of $U$ (so that the
regularizer $R(U)$ can be written in terms of $M$), and $Z$ is introduced to facilitate 
the nuclear norm term. These substitutions allow splitting the terms of the objective 
function in the ADMM framework and enable more efficient optimization. In detail, we 
define the differential operator $D=[D_1^T,D_2^T]^T$, and express the auxiliary variable 
$M=[M_{ij}]$, where each matrix entry $M_{ij}=[(D_1U)_{i,j},(D_2U)_{i,j}]^T$ captures the
local gradient information. We propose to solve the resulting optimization problem using 
the ADMM; see, e.g., \cite{Bu,ADMM1,LPS,ADMM2} for discussions of this method. The 
augmented Lagrangian is given by 
\begin{eqnarray*}
	L_{\beta_1,\beta_2}\left(U,Z,M,Q,O\right)=\dfrac{\lambda}{2}\left\Vert U -
	B\right\Vert_F^2 &+& \sum_{i=1}^{n_1}\sum_{j=1}^{n_2} 
	\phi\left( \left\Vert M_{ij}\right\Vert_2; T,a \right) + 
	\left\Vert Z\right\Vert_* \\ 
	&+& \langle DU-M,Q \rangle + \dfrac{\beta_1}{2}\left\Vert DU-M \right\Vert_F^2\\
	&+& \langle Z-U,O \rangle +\dfrac{\beta_1}{2}\left\Vert Z-U \right\Vert_F^2,
\end{eqnarray*}
where $\beta_1>0$ and $\beta_2>0$ are Lagrangian parameters and
$Q\in\mathbb{R}^{2n_1\times n_2}$ and $O\in\mathbb{R}^{n_1\times n_2}$ are Lagrangian 
multipliers. The inner product of $U=[u_{ij}]\in\mathbb{R}^{n_1\times n_2}$ and 
$V=[v_{ij}]\in\mathbb{R}^{n_1\times n_2}$ is defined as
\begin{equation}\label{innerprod}
\langle U,V\rangle={\rm trace}(U^TV)=\sum_{i=1}^{n_1}\sum_{j=1}^{n_2}u_{ij}v_{ij}. 
%\; \; {\rm and}\; \;  \|U\|_F=\langle U,U\rangle^{1/2}.
\end{equation}
In particular, $\|U\|_F=\langle U,U\rangle^{1/2}$.

At the $(k+1)$st iteration of ADMM, we solve the following subproblems
\begin{eqnarray}
	&U^{k+1}=&\arg\min_{U} L_{\beta_1,\beta_2}\left(U,Z^k,M^k,Q^k,O^k\right)\label{subp 1},\\
	&Z^{k+1}=&\arg\min_{Z} L_{\beta_1,\beta_2}\left(U^{k+1},Z,M^k,Q^k,O^k\right), 
	\label{subp 2}\\
	&M^{k+1}=&\arg\min_{M} L_{\beta_1,\beta_2}\left(U^{k+1},Z^{k+1},M,Q^k,O^k\right), 
	\label{subp 3}\\
	&Q^{k+1}=&Q^{k}+\beta_1 \left(DU^{k+1} - M^{k+1}\right), \label{subp 4}\\
	&O^{k+1}=&O^{k}+\beta_2 \left(U^{k+1} - Z^{k+1}\right). \label{subp 5}
\end{eqnarray}
The remainder of this section discusses the solution of these subproblems.

\subsection{Solution of subproblem \eqref{subp 1}} 
We obtain from \eqref{subp 1} that 
\[
U^{k+1}=\arg\min_{U}\dfrac{\lambda}{2}\left\Vert U -B \right\Vert_F^2 + 
\langle DU-M^{k},Q^{k} \rangle +\dfrac{\beta_1}{2}\left\Vert DU-M^{k} \right\Vert_F^2
+ \langle Z^{k}-U,O^{k} \rangle +\dfrac{\beta_2}{2}\left\Vert Z^{k}-U \right\Vert_F^2,
\]
which is equivalent to 
\[
U^{k+1}=\arg\min_{U}\dfrac{\lambda}{2}\left\Vert U -B \right\Vert_F^2 
+\dfrac{\beta_1}{2}\left\Vert DU-M^{k} +\dfrac{Q^{k}}{\beta_1} \right\Vert_F^2
+ \dfrac{\beta_2}{2}\left\Vert Z^{k}-U  +\dfrac{O^{k}}{\beta_2} \right\Vert_F^2.
\]
The solution satisfies 
\begin{equation}\label{u1}
	\lambda U + \beta_1 D^TDU = \lambda B + \beta_1 D^TM^{k} -D^TQ^{k} +\beta_2 Z^{k} -O^{k},
\end{equation}
where $D^T$ is the adjoint of the gradient operator, i.e., the divergence operator. Let
\[
R^{k}=\lambda B + \beta_1 D^TM^{k} -D^TQ^{k} +\beta_2 Z^{k} -O^{k}.
\]
We then can write equation \eqref{u1} as
\[
\lambda U + \beta_1 D_1^TD_1 U + \beta_1 D_2^TD_2U=R^k,
\]
which is equivalent to 
\[
\lambda U + \beta_1  UC_1C_1^T + \beta_1 UC_2C_2^T=R^k.
\]
Since the matrices $C_1$ and $C_2$ are circulants, they are diagonalizable by Discrete
Fourier Transform (DFT) matrices, i.e.,
\[
C_1=F_1^H \Lambda_1 F_1,\quad C_2=F_2^H \Lambda_2 F_2,
\]
where $F_1$ and $F_2$ denote DFT matrices of sizes $n_2\times n_2$ and $n_1\times n_1$,
respectively, and the superscript $^H$ stands for transposition and complex conjugation. 
The diagonal entries of the diagonal matrices $\Lambda_1$ and $\Lambda_2$ are the 
eigenvalues of $C_1$ and $C_2$, respectively. Let $\otimes$ denote the Kronecker product 
and let the operator $\texttt{vec}$ stack all entries of a matrix column by column to
give a vector. We obtain
\[
\left(F_1^H \otimes F_2^H\right) \left(\lambda I \otimes I + \beta_1 \Lambda_1 \otimes I 
+\beta_1 I \otimes \Lambda_2\right)\left(F_1 \otimes F_2\right)\texttt{vec}
\left( U \right)=\texttt{vec}\left( R^k \right).
\]
The parameters $\lambda$ and $\beta_1$ are positive and the matrices $\Lambda_1$ and 
$\Lambda_2$ are invertible. Therefore, the matrix 
$\left(\lambda I\otimes I+\beta_1\Lambda_1\otimes I+\beta_1 I\otimes\Lambda_2\right)$ 
is invertible. It follows that $U$ can be expressed as
\begin{equation}\label{sol u1}
\texttt{vec}\left( U \right)=\left(F_1^H \otimes F_2^H\right) \left(\lambda I \otimes I + 
\beta_1 \Lambda_1 \otimes I +\beta_1 I \otimes \Lambda_2\right)^{-1}
\left(F_1 \otimes F_2\right)\texttt{vec}\left( R^k \right).
\end{equation}

\subsection{Solving subproblem \eqref{subp 2}} 
The matrix $Z^{k+1}$ satisfies
\[
Z^{k+1}=\arg\min_{Z}\left\{\left\Vert Z\right\Vert_* + \langle Z-U^{k+1},O^{k} \rangle +
\dfrac{\beta_2}{2}\left\Vert Z-U^{k+1} \right\Vert_F^2\right\},
\]
which can be written as
\[
Z^{k+1}=\arg\min_{Z} \left\{\left\Vert Z\right\Vert_* + \dfrac{\beta_2}{2}\left\Vert Z-
U^{k+1} +\dfrac{O^{k}}{\beta_2} \right\Vert_F^2\right\}.
\]
The right-hand side defines a proximal operator of the matrix nuclear norm. It can be 
shown that since $\beta_2>0$, the iterates $Z^{k+1}$ converge to a unique solution 
$Z^\infty$ as $k\to\infty$; see \cite{nuclear_norm_prox}. The iterate $Z^{k+1}$ can be 
expressed with the aid of Singular Value Thresholding (SVT) of the matrix 
\begin{equation}\label{Ushft}
	U^{k+1} -\dfrac{O^{k}}{\beta_1}.
\end{equation}
Let $USV^T$ denotes the singular value decomposition of the matrix \eqref{Ushft}. Thus, $U$
and $V$ are orthogonal matrices and $S$ is a diagonal matrix, whose nontrivial entries are 
the singular values. Then
\begin{equation}\label{sol z1}
	Z^{k+1}=U S_{\frac{1}{\beta_2}}  V^T,
\end{equation}
where, elementwise, $S_{\frac{1}{\beta_2}}=\max\left(S-\dfrac{1}{\beta_2},0\right)$.

\subsection{Solving subproblem \eqref{subp 3}}
Let $M=[M_{ij}]\in\mathbb{R}^{n_1\times n_2}$. Then this subproblem can be written as 
\begin{equation*}
M^{k+1}=\arg\min_{M} \left\{\sum_{i=1}^{n_1}\sum_{j=1}^{n_2} \phi\left( \left\Vert M_{ij}
\right \Vert_2; T,a \right) + \left\Vert Z^{k+1}\right\Vert_* + 
\langle DU^{k+1}-M,Q^{k} \rangle 
+ \dfrac{\beta_1}{2}\left\Vert DU^{k+1}-M\right\Vert_F^2\right\}.
\end{equation*}
This expression can can be simplified to
\begin{equation}\label{sub2.5}
M^{k+1}=\arg\min_{M}\left\{ \sum_{i=1}^{n_1}\sum_{j=1}^{n_2} \phi\left( \left\Vert M_{ij}
\right \Vert_2; T,a \right)+\dfrac{\beta_1}{2}\left\Vert DU^{k+1}-M+\dfrac{Q^k}{\beta_1} 
\right\Vert_F^2\right\}.
\end{equation}
Following the discussion by Chan et al. \cite{CNC}, we find that the minimizer is given by 
\begin{equation}\label{sol M1}
	M_{ij}=\zeta_{ij}R_{ij},
\end{equation}
where 
\[
R_{ij}=\left[\left(D_1U^{k+1}\right)_{ij}-\dfrac{Q^{k,(1)}_{i,j}}{\beta_1}, 
\left(D_2U^{k+1}\right)_{ij}-\dfrac{Q^{k,(2)}_{i,j}}{\beta_1}\right]
\]
and 
\[
\zeta_{ij}=\left\lbrace\begin{array}{ll}
	\kappa_1, &~~~~\text{if } \left\Vert R_{ij}\right\Vert_2\in [0,\kappa_0), \\
	\kappa_2 -\dfrac{\kappa_3}{\left\Vert R_{ij}\right\Vert_2}, & ~~~~\text{if } 
	\left\Vert R_{ij}\right\Vert_2\in [\kappa_0,T_2), \\
	1, & ~~~~\text{if } \left\Vert R_{ij}\right\Vert_2\in [T_2,+\infty)
\end{array}\right.
\]
with 
\[
\kappa_0=T+\dfrac{a}{\beta_1}(T_2-T),\quad \kappa_1=\dfrac{T}{\kappa_0},\quad 
\kappa_2=\dfrac{\beta_1}{\beta_1-a},\quad \kappa_3=\dfrac{aT_2}{\beta_1-a}.
\]
Chan et al. \cite{CNC} show that the cost function for the minimization problem 
\eqref{sub2.5} is strongly convex (convex) if and only if $\beta_1>a$ ($\beta_1\geq a$). 

We are in a position to discuss the solution of the minimization problem \eqref{main4}.
The following algorithm outlines the solution process. 

\begin{algorithm}[H]
\caption{Low-rank convex-non-convex (LR-CNC) algorithm}\label{Alg 1}
\textbf{Input:} $B\in \mathbb{R}^{n_1\times n_2}$ observed data.\\
\textbf{Output:} $U\in \mathbb{R}^{n_1\times n_2}$ recovered data.\\
\textbf{Initialize:} $\lambda$, $\beta_1$, $\beta_2$, $a$, $T_2$.
\begin{algorithmic}[1]
	\While{not converged}
	\State Update $U$ from \eqref{sol u1}.
	\State Update $Z$ from \eqref{sol z1}.
\State Update $M$ from \eqref{sol M1}.
\State Update $Q$ and $Q$ from \eqref{subp 4} and \eqref{subp 5}, respectively.
\EndWhile
\end{algorithmic}
\end{algorithm}

\noindent ADMM is guaranteed to converge to a global minimum of the convex problem \eqref{main5}; in
our non-convex formulation, we show in Section \ref{sec 3} that every limit point of the 
sequence generated by Algorithm~\ref{Alg 1} is a stationary point of \eqref{main5}.
\\
A numerically practical way to compute the solution $U$ is to first apply the MATLAB 
function \texttt{psf2otf} to the matrix 
\[
L=\frac{1}{4}\begin{bmatrix}
	0& -1& 0\\
	-1& 4& -1\\
	0& -1& 0
\end{bmatrix}.
\]
This MATLAB function is used to compute the fast Fourier transform of the matrix $L$ 
(considered as a Point-Spread Function (PSF) array) and creates the Optical Transfer
Function (OTF) array. Using MATLAB notation, $U$ can be computed as 
\[
U=\texttt{ifft2}( \texttt{fft2}(R^k)./(1+((\beta_1/\lambda)*\texttt{psf2otf}(L)))).
\]
Generally, the matrix $M$ is easy to compute. The main computational work required by 
Algorithm \ref{Alg 1} is the computation of the matrix $Z$ as this requires the evaluation
of a singular value decomposition. The matrices in the computed examples of Section 
\ref{sec 4} are small enough to make this feasible. For very large matrices, we can 
compute a partial singular value decomposition that is made up of all singular triplets 
with singular values larger than $1/\beta_2$. These singular triples can be computed, 
e.g., with the MATLAB function \texttt{svds}, which implements the method described in 
\cite{BR}.

\section{Convergence analysis}\label{sec 3}
This section investigates the convergence of Algorithm~\ref{Alg 1}. We establish that the
sequence of the generated iterates has limit points and any accumulation point is a 
solution (or stationary point) of the minimization problem. Let 
\begin{equation}\label{Jfun}
	J(U;\lambda,a,T)=F(U)+L(U) + R(U),
\end{equation}
where 
\[
F(U)=\dfrac{\lambda}{2}\left\Vert U-B \right\Vert_F^2,\quad  
L(U)=\left\Vert U\right\Vert_*, \quad R(U)=
\sum_{i=1}^{n_1}\sum_{j=1}^{n_2}\phi\left(\left\Vert \left(\nabla U\right)_{ij} 
\right\Vert_2;T,a\right).
\]
The functionals $F$, $L$, and $R$ are referred to as the fidelity, low-rank, and
regularization functionals, respectively. Consider the saddle point problem of determining
a quintuple $\left( U^*, Z^*, W^*, Q^*, O^* \right)$ such that
\begin{equation}
	\label{saddle}
	\begin{split}
		L_{\beta_1,\beta_2}\left(U^*,Z^*,M^*,Q,O\right)\leq L_{\beta_1,\beta_2}
		\left(U^*,Z^*,M^*,Q^*,O^*\right)\leq L_{\beta_1,\beta_2}\left(U,Z,M,Q^*,O^*\right),\\ 
		\forall \left(U,Z,M,Q,O\right)\in \mathbb{R}^{n_1\times n_2} \times 
		\mathbb{R}^{n_1\times n_2} \times \mathbb{R}^{2n_1 \times n_2} \times 
		\mathbb{R}^{2n_1\times n_2}\times \mathbb{R}^{n_1 \times n_2}.
	\end{split}
\end{equation}
Chan et al. \cite[Lemma 3.1]{CNC} show the following result.

\begin{lemma}
	The functional defined by 
	\[
	\begin{split}
		J_1(\cdot;\lambda,T,a):\mathbb{R}^{n_1\times n_2}& \to \mathbb{R},\\
		U& \mapsto \dfrac{\lambda}{2}\left\Vert U-B \right\Vert_F^2 +
		\sum_{i=1}^{n_1}\sum_{j=1}^{n_2}\phi\left(\left\Vert \left(\nabla U\right)_{ij} 
		\right\Vert_2;T,a\right)
	\end{split}
	\]
	is strictly convex if and only if the function 
	$h(\cdot;\lambda,T,a):\mathbb{R}\to\mathbb{R}$ given by 
	\[
	h(t;\lambda,T,a)=\dfrac{\lambda}{18}t^2 + \phi(\left\vert t \right\vert;T,a)
	\]
	is strictly convex.
\end{lemma}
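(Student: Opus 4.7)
The plan is to reduce strict convexity of the matrix-valued functional $J_1$ to strict convexity of the one-dimensional function $h$ by means of a local three-pixel decomposition. The coefficient $\lambda/18$ in $h$ reflects a two-sided $3$-fold incidence between pixels and gradient cells: on the one hand, each gradient value $(\nabla U)_{kl}$ depends only on the three pixels $u_{kl}, u_{k,l+1}, u_{k+1,l}$; on the other, each pixel $u_{ij}$ appears in exactly three such cells, namely those indexed by $(i,j)$, $(i,j-1)$, and $(i-1,j)$ (thanks to the periodic boundary conditions encoded in $C_1$ and $C_2$). These observations make it natural to distribute the fidelity $\tfrac{\lambda}{2}\|U-B\|_F^2$ evenly among the three cells containing each pixel.

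Concretely, setting $N_{kl}=\{(k,l),(k,l+1),(k+1,l)\}$, I would rewrite $J_1(U) = \sum_{(k,l)} T_{kl}(u_{kl},u_{k,l+1},u_{k+1,l})$, where $T_{kl}(x,y,z) = \tfrac{\lambda}{6}\bigl[(x-b_{kl})^2+(y-b_{k,l+1})^2+(z-b_{k+1,l})^2\bigr] + \phi\bigl(\sqrt{(y-x)^2+(z-x)^2};T,a\bigr)$. I would then diagonalize the quadratic form $(y-x)^2+(z-x)^2 = \|J(x,y,z)^T\|^2$, where $J$ denotes the linear map $(x,y,z)\mapsto(y-x,z-x)$, by an orthonormal change of coordinates on $\mathbb R^3$. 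Since $J^T J$ has eigenvalues $0,1,3$ with kernel along $(1,1,1)/\sqrt 3$, denoting the new coordinates by $(\alpha,\beta,\gamma)$ (with $\gamma$ along the kernel) and setting $s=\sqrt 3\,\alpha$, a routine calculation shows that $T_{kl}$ equals, modulo affine terms, $\tfrac{\lambda}{6}\gamma^2 + \tilde g(s,\beta)$, with $\tilde g(s,\beta) = \tfrac{\lambda}{18}s^2+\tfrac{\lambda}{6}\beta^2+\phi\bigl(\sqrt{s^2+\beta^2};T,a\bigr)$. The $\gamma$-part is automatically strictly convex. For $\tilde g$ I would use the standard decomposition $\nabla^2\phi(\|\cdot\|) = \phi''(r)\hat u\hat u^T + \tfrac{\phi'(r)}{r}(I-\hat u\hat u^T)$, with $r=\sqrt{s^2+\beta^2}$ and $\hat u=(s,\beta)/r$: the tangential eigenvalue picks up $\phi'(r)/r\ge 0$ and is never binding, while the radial eigenvalue computed on the line $\beta=0$ reads exactly $h''(s)=\lambda/9+\phi''(|s|)$, and is strictly larger along any other direction. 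Hence strict convexity of $\tilde g$, and therefore of each $T_{kl}$, is equivalent to strict convexity of $h$.

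Finally I would assemble the two directions. The forward implication follows directly from the sum structure: if every $T_{kl}$ is strictly convex, then any perturbation $V\ne 0$ produces a nonzero triple $(V_{kl},V_{k,l+1},V_{k+1,l})$ for at least one cell, so that cell contributes a strictly positive second variation and $J_1$ is strictly convex. The main obstacle I anticipate is the converse: a single $T_{kl}$ failing strict convexity along one direction need not spoil the global sum, because the neighbourhoods overlap and adjacent cells may contribute strict curvature in that same direction. To handle this, in the case where $h$ is not strictly convex (so $h''$ vanishes on some interval $I\subset(T,T_2)$) I would construct an explicit globally flat line: pick a checkerboard-type pattern $V$ whose local image in every cell's $(\alpha,\beta,\gamma)$ coordinates lies on the $\beta=0$ radial axis, so that the bad one-dimensional direction is excited simultaneously in every cell, together with a base point $U$ of the same pattern scaled so that every gradient magnitude lies in $I$. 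A direct computation of $\tfrac{d^2}{dt^2}J_1(U+tV)$ in these aligned coordinates would then show that it vanishes on a non-trivial $t$-interval, which produces the desired contrapositive.
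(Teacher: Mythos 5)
A preliminary remark: the paper does not actually prove this lemma --- it cites \cite[Lemma 3.1]{CNC} and moves on --- so there is no in-paper argument to compare yours against. Your \emph{sufficiency} half ($h$ strictly convex $\Rightarrow J_1$ strictly convex) is sound and is, as far as I can tell, the intended argument: the three-fold pixel/cell incidence, the eigenvalues $0,1,3$ of $J^TJ$, and the resulting normal form $T_{kl}=\tfrac{\lambda}{6}\gamma^2+\tfrac{\lambda}{18}s^2+\tfrac{\lambda}{6}\beta^2+\phi\bigl(\sqrt{s^2+\beta^2};T,a\bigr)+\text{affine}$ correctly account for the constant $\lambda/18$. (For the last step it is cleaner, and avoids the nonsmooth points $r\in\{T,T_2\}$ and the origin where your Hessian computation is only formal, to rewrite $\tilde g(s,\beta)=\tfrac{\lambda}{9}\beta^2+h\bigl(\sqrt{s^2+\beta^2}\bigr)$ and use that $h$ is even and strictly convex, hence strictly increasing on $(0,\infty)$, so that $h(\Vert\cdot\Vert)$ is convex and $\tilde g$ is strictly convex along every line.)

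The \emph{necessity} half has a genuine gap: the perturbation $V$ you propose does not exist. You need, in every cell $(k,l)$, the triple $(V_{kl},V_{k,l+1},V_{k+1,l})$ to lie on the $\alpha$-axis, i.e.\ to equal $t_{kl}(-2,1,1)$ for some scalar $t_{kl}$. But the cells overlap: pixel $(k,l+1)$ is the ``$y$-slot'' of cell $(k,l)$ and the ``$x$-slot'' of cell $(k,l+1)$, which forces $V_{k,l+1}=t_{kl}=-2t_{k,l+1}$, i.e.\ $t_{k,l+1}=-t_{kl}/2$. Iterating along a row and using the periodicity built into $C_1$ gives $t_{kl}=(-1/2)^{n_2}t_{kl}$, hence $t_{kl}=0$ and $V=0$; allowing some cells to carry the zero triple does not help, since any nonzero pixel propagates a nonzero chain around the torus to the same contradiction. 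Worse, this rigidity is not a removable technicality for your route: when $h$ is convex but not strictly (e.g.\ $\lambda=9a$), every summand $T_{kl}$ is convex, so $J_1$ can be affine along a segment only if \emph{every} $T_{kl}$ is affine along it, and your own normal form shows that the only directions along which a single $T_{kl}$ can be affine are precisely the locally-$(-2,1,1)$ ones you tried (and failed) to glue. So the converse cannot be obtained by exhibiting a common flat direction of the summands of this splitting; it would require an argument of a different nature --- and the above computation in fact casts doubt on whether the ``only if'' holds at all for the periodic stencil at the boundary case. As it stands, the converse direction is not established by your proposal.
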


\begin{lemma}
	The functional $J(\cdot;\lambda,T,a):\mathbb{R}^{n_1\times n_2}\to \mathbb{R}$ is strictly 
	convex if and only if the function $J_1:\mathbb{R}^{n_1\times n_2}\to \mathbb{R}$ is 
	strictly convex.
\end{lemma}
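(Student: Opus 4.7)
The plan is to work with the decomposition $J(U) = J_1(U) + L(U)$, where $L(U) = \left\Vert U\right\Vert_*$ is the nuclear norm. Since $L$ is a norm, it is convex (though not strictly convex: it is linear along rays from the origin, and more generally along segments of matrices with aligned singular vectors). Strict convexity of $J$ and of $J_1$ can therefore be compared through $L$.

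The ``if'' direction ($J_1$ strictly convex $\Rightarrow$ $J$ strictly convex) is immediate: the sum of a strictly convex function and a convex function is strictly convex. I would simply invoke this elementary fact together with the convexity of $\left\Vert \cdot\right\Vert_*$.

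For the ``only if'' direction ($J$ strictly convex $\Rightarrow$ $J_1$ strictly convex) I would argue by contraposition. Assume $J_1$ is not strictly convex. By the preceding lemma, $h(\cdot;\lambda,T,a)$ is not strictly convex on $\mathbb{R}$, so there exist scalars $\tau_1<\tau_2$ and $s\in(0,1)$ with equality in the convexity inequality at $(1-s)\tau_1 + s\tau_2$; by convexity of $h$ this forces $h$ to be \emph{affine} on the whole interval $[\tau_1,\tau_2]$. The goal is to lift this one-dimensional affine piece into $\mathbb{R}^{n_1\times n_2}$: produce matrices $U_1\neq U_2$ such that $J_1$ is affine on $[U_1,U_2]$ and, in addition, $L$ is affine on $[U_1,U_2]$. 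Then $J = J_1 + L$ is affine, hence not strictly convex, on the same segment, contradicting strict convexity of $J$.

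The lift would mirror the construction used in the proof of Chan et al.'s Lemma~3.1: take $U_i = B + \tau_i E$ for a carefully chosen rank-one probe $E = uv^T$. Two properties are then needed. First, along the ray $\{B + tE\}$ the fidelity $F$ reduces to a scalar quadratic in $t$ and the regularizer $R$ reduces to a constant multiple of $\phi(|t|;T,a)$, so that $J_1$ restricted to this ray equals (up to constants) the function $h$; affineness of $h$ on $[\tau_1,\tau_2]$ then transfers to affineness of $J_1$ on $[U_1,U_2]$. Second, because $E$ has rank one, $\left\Vert B+tE\right\Vert_*$ is affine in $t$ on any interval where the relevant singular values do not cross zero, so $L$ is automatically affine on the lifted segment.

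The main obstacle is exhibiting a probe $E$ satisfying both the gradient-uniformity condition (so that $R(B+tE)$ collapses to a scalar evaluation of $\phi$, with the correct entry count to reproduce the $\lambda/18$ scaling appearing in $h$) and the rank-one condition (so that $L$ is affine on the segment). These two requirements pull in different directions — the circulant operators $D_1,D_2$ act in two distinct coordinate directions, while rank one is a stringent structural constraint — so the hard part is to verify that a common $E$ exists. Once such a probe is identified, and the interval $[\tau_1,\tau_2]$ is shifted if necessary so as to keep it on one side of the singular-value sign change, the contrapositive closes immediately.
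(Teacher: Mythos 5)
Your ``if'' direction is correct and is essentially all the paper itself does: its entire proof is the one--line remark that the nuclear norm is convex and ``therefore does not affect the strict convexity,'' which rigorously covers only the implication ($J_1$ strictly convex $\Rightarrow$ $J$ strictly convex). You are right to flag that the converse is the nontrivial part --- adding a convex term can in general \emph{create} strict convexity, so it genuinely needs an argument --- and in that sense your plan is more honest than the paper's proof.

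However, your proposed route for the converse has a genuine gap, and one of its load-bearing claims is false. You assert that because $E$ has rank one, $t\mapsto\left\Vert B+tE\right\Vert_*$ is affine on any interval where the relevant singular values do not cross zero. This is not true: take $B=I_2$ and $E=e_1e_2^T$. Then $B+tE=\left(\begin{smallmatrix}1&t\\0&1\end{smallmatrix}\right)$ has $\sigma_1^2+\sigma_2^2=2+t^2$ and $\sigma_1\sigma_2=\vert\det(B+tE)\vert=1$, so $\left\Vert B+tE\right\Vert_*=\sqrt{4+t^2}$, which is strictly convex in $t$ on every interval even though no singular value ever vanishes. Affinity of the nuclear norm along a line requires the direction $E$ to be aligned with the singular subspaces of the base point (e.g.\ $E=u_1v_1^T$ with $u_1,v_1$ singular vectors of $B$, staying on one side of $-\sigma_1$), which is far more rigid than ``rank one.'' Combined with the gradient-uniformity condition needed to collapse $R(B+tE)$ to a multiple of $\phi(\vert t\vert;T,a)$ --- whose standard test directions in the Chan et al.\ construction are localized perturbations with no reason to respect the singular structure of $B$ --- it is entirely unclear that a common probe $E$ exists, and you explicitly leave this unverified. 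As written, the contrapositive therefore does not close: the failure of strict convexity of $J_1$ gives you \emph{some} affine segment, but you have not produced one on which $\left\Vert\cdot\right\Vert_*$ is also affine, which is exactly what the ``only if'' direction requires.
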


\begin{proof}
	The functional $J(\cdot;\lambda,T,a)$ can be written as
	\begin{equation}\label{Jrep}
		J(U;\lambda,T,a)=J_1(U,;\lambda,T,a) + \left\Vert U\right\Vert_*.
	\end{equation}
	The nuclear norm function is convex. Therefore, it does not affect the strict convexity. 
	This shows the lemma.
\end{proof}
\vskip2pt

\begin{corollary}\label{cor 1}
	The functional $J$ is strictly convex if and only if $h$ is strictly convex.
\end{corollary}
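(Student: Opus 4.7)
The plan is almost trivial: Corollary~\ref{cor 1} is just the transitive chain obtained by composing the two lemmas immediately preceding it. So I would not attempt a direct proof from the definition of strict convexity; instead, I would simply invoke the two equivalences already established.

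More concretely, the first step is to recall the second lemma, which asserts that $J(\cdot;\lambda,T,a)$ is strictly convex if and only if $J_1(\cdot;\lambda,T,a)$ is strictly convex. This reduces the question from $J$ to $J_1$. The second step is to apply the first lemma (Chan et al.\ \cite[Lemma 3.1]{CNC} as reproduced above), which asserts that $J_1(\cdot;\lambda,T,a)$ is strictly convex if and only if the scalar function $h(\cdot;\lambda,T,a)$ is strictly convex. Chaining the two ``if and only if'' statements yields the claim.

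There is essentially no obstacle, since both directions of each equivalence are already available. The only minor bookkeeping point to mention is that the two lemmas are stated for the same parameter triple $(\lambda,T,a)$ and for the same reference matrix $B$ appearing in $F(U)=\tfrac{\lambda}{2}\|U-B\|_F^2$, so the composition is genuinely unconditional: whatever $(\lambda,T,a,B)$ one fixes, strict convexity of $J$ and strict convexity of $h$ are simultaneously true or simultaneously false. I would therefore write the proof as a single sentence citing the two lemmas, perhaps displaying the chain $J \text{ strictly convex} \iff J_1 \text{ strictly convex} \iff h \text{ strictly convex}$ for clarity, and close with $\square$.
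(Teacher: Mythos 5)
Your proposal is correct and matches the paper's intent exactly: the corollary is stated immediately after the two lemmas with no separate proof, precisely because it follows by chaining the equivalence $J$ strictly convex $\iff$ $J_1$ strictly convex (second lemma) with $J_1$ strictly convex $\iff$ $h$ strictly convex (the cited Lemma 3.1 of Chan et al.). Nothing further is needed.
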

\vskip2pt

The next theorem furnishes conditions that secure strict convexity of 
$J(\cdot;\lambda,T,a)$.
\vskip2pt

\begin{theorem}
	A sufficient condition for the functional $J(\cdot;\lambda,T,a)$ to be strictly convex is 
	that the pair $\left(\lambda,a\right)\in \mathbb{R}^*_+\times \mathbb{R}^{*}_+$ satisfies
	\begin{equation}\label{condition}
		\lambda>9a \quad \left( \text{equivalently}, \; \lambda=\tau\, 9a
		\text{~~for some constant~~} \tau>1 \right).
	\end{equation}
\end{theorem}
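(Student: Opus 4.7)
The plan is to reduce strict convexity of $J$ to strict convexity of the one-variable function $h(t;\lambda,T,a)=\tfrac{\lambda}{18}t^2+\phi(|t|;T,a)$ via Corollary \ref{cor 1}, and then verify strict convexity of $h$ directly from the piecewise definition of $\phi$ in \eqref{phifun}, using the second-derivative bound $\inf \phi'' = -a$ (property 6 of $\phi$).

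First, I would invoke Corollary \ref{cor 1} to reduce the theorem to showing that $h(\cdot;\lambda,T,a)$ is strictly convex whenever $\lambda>9a$. Since $\phi$ is even in its first argument after composing with $|\cdot|$, the function $h$ is even, so it suffices to analyze $t\ge 0$. On the three pieces determined by \eqref{phifun} where $\phi$ is $C^2$, namely $t\in(0,T)$, $t\in(T,T_2)$, and $t\in(T_2,\infty)$, I would compute
\[
h''(t)=\frac{\lambda}{9}+\phi''(t),
\]
and use the explicit formulas for $\phi_1,\phi_2,\phi_3$ together with property 6 to obtain
\[
h''(t)\ \ge\ \frac{\lambda}{9}-a\ >\ 0
\]
on each open piece, the strict inequality coming from the hypothesis $\lambda>9a$.

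Next, I would handle the potentially troublesome points $t=0,T,T_2$ where $h''$ may fail to exist. By property 1 of $\phi$, the one-sided derivative of $\phi$ at $0$ vanishes, so $h$ is $C^1$ at $0$ (the $|t|$ induces no kink because $\phi'(0^+)=0$), and by property 1 again $\phi$ is $C^1$ on all of $\mathbb{R}_+$, so $h'$ is continuous at $T$ and $T_2$. Combined with $h''>0$ on each piece, this gives that $h'$ is continuous on $\mathbb{R}$ and strictly increasing on each subinterval separated by $\{-T_2,-T,0,T,T_2\}$. Continuity at the breakpoints then upgrades this to $h'$ being strictly increasing on all of $\mathbb{R}$, from which strict convexity of $h$ follows.

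The main obstacle I anticipate is the bookkeeping at the non-smooth points $\pm T, \pm T_2$, and in particular at $t=0$ where the absolute value is introduced. The argument avoids differentiating twice at these points and instead leverages the combination of continuity of $h'$ (from property 1 of $\phi$) with strict monotonicity on each smooth piece. An alternative, slightly cleaner path would be to show directly that $h'$ is strictly increasing by checking the jump of $h'$ across each breakpoint is nonnegative (it is in fact zero, since $\phi\in C^1$), and that $h'$ is strictly increasing inside each piece because $h''>0$ there; this gives strict convexity without ever needing $h''$ to exist globally.
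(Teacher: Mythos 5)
Your proof is correct and follows essentially the same route as the paper, which reduces the claim to strict convexity of $h$ via Corollary \ref{cor 1} and then defers the verification of the condition $\lambda>9a$ to Chan et al.\ \cite[Theorem 3.5]{CNC}; you simply carry out that verification explicitly. Your piecewise computation $h''(t)=\tfrac{\lambda}{9}+\phi''(t)\geq\tfrac{\lambda}{9}-a>0$ on the smooth pieces, together with the $C^1$ matching of $h'$ at $0,\pm T,\pm T_2$ from property 1 of $\phi$, is exactly the content of the cited argument.
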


\begin{proof}
	The theorem can be shown similarly as \cite[Theorem 3.5]{CNC} by using Corollary
	\ref{cor 1}. 
\end{proof}
\vskip2pt

A function $G:\mathbb{R}^n\to\mathbb{R}$ is said to be $\mu$-strongly convex if there is
a constant $\mu>0$ such that $G(x)-\frac{\mu}{2}\|x\|_2^2$ is convex.
\vskip2pt

\begin{proposition}
	The functional $J(\cdot;\lambda,T,a)$ is proper, continuous, bounded from below by zero, 
	coercive, and $\mu$-strongly convex, where 
	\begin{equation}\label{muval}
		\mu=\lambda -9a.
	\end{equation}
\end{proposition}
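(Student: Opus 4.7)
The plan is to verify the five properties in sequence, with the substantive work lying in the $\mu$-strong convexity, which I would reduce to a quantitative version of Corollary~\ref{cor 1} (under the implicit assumption $\lambda>9a$ of the proposition, so that $\mu>0$).

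First, properness, continuity, non-negativity, and coercivity are essentially immediate. Properness holds because $J$ is defined and real-valued on all of $\mathbb{R}^{n_1\times n_2}$. For continuity, $F$ is a polynomial, $L$ is the nuclear norm, and $R$ is a composition of the linear map $\nabla$, the Euclidean norm on $\mathbb{R}^2$, and $\phi$, which is continuous by property~1. For $J\ge0$, I would note $F\ge0$, $L\ge0$, and $R\ge0$ by checking the three pieces of $\phi$ in \eqref{phifun}: $\phi_1$ is a non-negative quadratic through the origin, $\phi_3$ is a positive constant, and $\phi_2$ is non-negative on $[T,T_2)$ since it interpolates along a concave arc (property~4) between the non-negative values $\phi_1(T^-)$ and $\phi_3(T_2^-)$. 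Coercivity follows from $F(U)\to\infty$ as $\|U\|_F\to\infty$, together with $R,L\ge0$.

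The substantive step is $\mu$-strong convexity with $\mu=\lambda-9a$. I would show that
\[
G(U):=J(U;\lambda,T,a)-\tfrac{\mu}{2}\|U\|_F^2
\]
is convex. Completing the square and using $\lambda-\mu=9a$,
\[
\tfrac{\lambda}{2}\|U-B\|_F^2-\tfrac{\mu}{2}\|U\|_F^2=\tfrac{9a}{2}\|U-\tilde B\|_F^2+c
\]
for $\tilde B:=\tfrac{\lambda}{9a}B$ and a scalar constant $c$. Hence
\[
G(U)=\underbrace{\tfrac{9a}{2}\|U-\tilde B\|_F^2+R(U)}_{=\,J_1(U;\,9a,T,a)}+L(U)+c.
\]
By the Lemma preceding Corollary~\ref{cor 1} (in the non-strict form discussed below), $J_1(\cdot;9a,T,a)$ is convex iff the scalar function
\[
h(t;9a,T,a)=\tfrac{9a}{18}t^2+\phi(|t|;T,a)=\tfrac{a}{2}t^2+\phi(|t|;T,a)
\]
is convex. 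Property~6 gives $h''(t)=a+\phi''(|t|)\ge0$ off the break points $\pm T,\pm T_2$, and property~1 together with the symmetry of $h$ gives $h\in C^1(\mathbb{R})$; this suffices for convexity of $h$ and hence of $J_1(\cdot;9a,T,a)$. Adding the convex nuclear norm $L$ and the constant $c$ preserves convexity, so $G$ is convex, which is the desired $\mu$-strong convexity.

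The main obstacle is that the cited Lemma is stated for strict convexity, whereas I need it at the boundary value $\lambda=9a$. I would handle this by a short limit argument: for every $\varepsilon>0$ the Lemma combined with Corollary~\ref{cor 1} yields strict convexity of $J_1(\cdot;9a+\varepsilon,T,a)$; passing $\varepsilon\downarrow 0$ in the midpoint inequality $J_1(\tfrac{U+V}{2})\le\tfrac{1}{2}J_1(U)+\tfrac{1}{2}J_1(V)$, whose constants are continuous in $\varepsilon$, yields convexity at $\lambda=9a$. Alternatively, inspection of the Hessian-based proof in \cite[Lemma~3.1]{CNC} shows that the same calculation with non-strict inequalities delivers the convex version directly. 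All other verifications are routine.
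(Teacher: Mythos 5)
Your proof is correct, and its skeleton agrees with the paper's: both rest on the decomposition $J=J_1+\|\cdot\|_*$ of \eqref{Jrep} and on the observation that adding the convex, non-negative, continuous nuclear norm preserves properness, continuity, non-negativity, coercivity, and $\mu$-strong convexity. The difference is one of self-containment. The paper disposes of every property of $J_1$ with a single citation to Chan et al.\ \cite[Proposition~3.7]{CNC} (and then invokes \eqref{normineq}, which is not actually needed, since $J_1$ is already coercive and $\|\cdot\|_*\ge 0$ suffices); you instead reprove the strong convexity from first principles: completing the square to rewrite $J-\tfrac{\mu}{2}\|\cdot\|_F^2$ as $J_1(\cdot;9a,T,a)$ with shifted data $\tilde B=\tfrac{\lambda}{9a}B$ plus a constant, invoking the scalar reduction behind Corollary~\ref{cor 1} (which is insensitive to the data matrix), and checking $h''=a+\phi''\ge 0$ off the break points via property~6 of $\phi$. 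Two details you supply are genuinely absent from the paper's treatment: the handling of the boundary case $\lambda=9a$ of the reduction lemma, which is stated only for strict convexity, via a limiting (or non-strict Hessian) argument; and the verification that $\phi\ge 0$ on $[T,T_2)$ from concavity and the non-negative endpoint values, which underpins both the lower bound $J\ge 0$ and coercivity. You also correctly flag that $\lambda>9a$ (i.e.\ condition \eqref{condition}) is an implicit hypothesis, since the paper's definition of $\mu$-strong convexity requires $\mu>0$. The only cost of your route is length; what it buys is independence from the external proposition and a transparent account of where the constant $9$ in \eqref{muval} enters, namely through the coefficient $\lambda/18$ in the scalar function $h$.
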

\vskip2pt

\begin{proof}
	Chan et al. \cite[Proposition 3.7]{CNC} show that the functional $J_1(U;\lambda,T,a)$ is
	proper, continuous, bounded from below by zero, coercive, and $\mu$-strongly convex, with 
	$\mu$ given by \eqref{muval}. The nuclear norm function satisfies
	\begin{equation}\label{normineq}
		\left\Vert U \right\Vert_F \leq \left\Vert U\right\Vert_*.
	\end{equation}
	The representations \eqref{Jrep} and \eqref{normineq} show that the functional $J$ is 
	continuous, bounded below by zero, and strongly $\mu$-convex. 
\end{proof}
\vskip2pt

\begin{lemma}\label{liplemma}
	Let the pair of parameters $\left(\lambda,a\right)$ satisfy  \eqref{condition}. Then the 
	functional $J$, the regularization term $R$, the quadratic fidelity term $F$, and the
	functional $L$ in \eqref{Jfun} are locally Lipschitz continuous functions. 
\end{lemma}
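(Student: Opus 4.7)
The plan is to verify local Lipschitz continuity for each of the three constituent functionals $F$, $L$, and $R$ separately, and then conclude for $J$ by the fact that a finite sum of locally Lipschitz functions is locally Lipschitz. Note that the condition \eqref{condition} is not really used here except insofar as it guarantees that the parameters defining the functionals lie in the admissible ranges; local Lipschitzness itself is a structural property.

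First I would treat $F$. Since $F(U)=\tfrac{\lambda}{2}\|U-B\|_F^2$ is a polynomial in the entries of $U$, it is $C^1$ with gradient $\nabla F(U)=\lambda(U-B)$, which is bounded on every bounded subset of $\mathbb{R}^{n_1\times n_2}$. The mean value inequality then yields local Lipschitz continuity of $F$ with an explicit constant proportional to the diameter of the set on which we work. For $L(U)=\|U\|_*$, I would invoke the reverse triangle inequality for a norm to obtain $|\,\|U\|_* - \|V\|_*\,|\le \|U-V\|_*$, together with the equivalence of norms in finite dimension, namely $\|U-V\|_*\le \sqrt{\min\{n_1,n_2\}}\,\|U-V\|_F$. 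Hence $L$ is even globally Lipschitz continuous with respect to the Frobenius norm, in particular locally Lipschitz.

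The main work concerns $R$. I would decompose the argument into three ingredients. First, from property 1 listed after the definition of $\phi$ in \eqref{phifun}, the scalar function $\phi(\cdot\,;T,a)$ is $C^1$ on $\mathbb{R}_+$, hence locally Lipschitz there (bounded derivative on any compact interval). Second, the Euclidean norm $\|\cdot\|_2$ on $\mathbb{R}^2$ is globally $1$-Lipschitz, and the gradient-type map $\nabla:\mathbb{R}^{n_1\times n_2}\to\mathbb{R}^{2n_1\times n_2}$ induced by the circulant matrices $C_1,C_2$ is linear and thus bounded, with operator norm bounded by $\|C_1\|_2+\|C_2\|_2$. Third, for any $U$ in a bounded set, the quantities $\|(\nabla U)_{ij}\|_2$ all lie in a fixed compact interval; applying the local Lipschitz constant of $\phi$ on that interval to each of the $n_1n_2$ summands, and chaining the Lipschitz constants of the composition $U\mapsto (\nabla U)_{ij}\mapsto \|(\nabla U)_{ij}\|_2\mapsto \phi(\|(\nabla U)_{ij}\|_2;T,a)$, yields a local Lipschitz bound for $R$.

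The only subtlety I anticipate is at the kink points $t=T$ and $t=T_2$ of $\phi$, where $\phi$ is not $C^2$ but still $C^1$, so the derivative $\phi'$ is continuous and therefore bounded on compact intervals; this is enough for local Lipschitzness and no more than property 1 is needed. Once $F$, $L$, and $R$ are each shown to be locally Lipschitz, $J=F+L+R$ inherits the same property since on any bounded neighborhood one can take the Lipschitz constant of $J$ to be the sum of the three individual constants, completing the proof.
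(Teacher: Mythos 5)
Your proposal is correct and follows essentially the same route as the paper: decompose $J$ into $F$, $L$, and $R$, establish local Lipschitz continuity of each, and sum. The paper simply cites Chan et al.\ for the terms $F$ and $R$ and notes that the nuclear norm $L$ is (globally) Lipschitz, whereas you supply the underlying details — bounded gradient for $F$, reverse triangle inequality plus norm equivalence for $L$, and the chain of Lipschitz maps through the linear gradient operator and the $C^1$ function $\phi$ for $R$ — all of which are sound.
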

\vskip2pt

\begin{proof}
	Chan et al. \cite{CNC} show that the functions $R$ and $F$ are locally Lipschitz continuous. 
	The lemma follows from the observation that the function $L$ also is locally Lipschitz 
	continuous.
\end{proof}
\vskip2pt

\begin{proposition}
	For any pair of parameters $\left(\lambda,a\right)$ that satisfies \eqref{condition}, the 
	functional $J:\mathbb{R}^{n_1\times n_2}\to\mathbb{R}$ has a unique (global) minimizer 
	$U^*$ that satisfies
	\begin{equation}\label{rel1}
		0\in \partial_u [J](U^*),
	\end{equation}
	where $0$ denotes the null matrix of size $n_1\times n_2$ and 
	$\partial_u [J](U^*)\subset \mathbb{R}^{n_1\times n_2}$ stands for the subdifferential of 
	$J$. Moreover, 
	\[
	0\in D^T\bar{\partial}_u[R](DU^*) + \lambda\left( U^* -B \right) + 
	\partial_U [\left\Vert U \right\Vert_*]\left(U^{*}\right),
	\]
	where $\bar{\partial}_u[R](DU^*)$ denotes the Clarke generalized gradient \cite{Clarke} of
	the (non-convex and non-smooth) regularization function $R$, and 
	$\partial_U [\left\Vert U \right\Vert_*]\left(U^{*}\right)$ is the subdifferential of the 
	nuclear norm function at $U^*$.
\end{proposition}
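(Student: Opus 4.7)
The plan is to establish existence and uniqueness of the minimizer from the properties of $J$ proved in the preceding proposition, and then to derive the first-order optimality condition through Fermat's rule combined with Clarke subdifferential calculus. For existence, we observe that under condition \eqref{condition} the preceding proposition shows that $J$ is continuous, coercive, and $\mu$-strongly convex with $\mu=\lambda-9a>0$. In the finite-dimensional space $\mathbb{R}^{n_1\times n_2}$, coercivity together with continuity forces every nonempty sublevel set of $J$ to be compact, so the Weierstrass extreme value theorem yields a global minimizer $U^*$. Strong convexity implies strict convexity, hence the minimizer is unique.

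For the inclusion $0\in\partial_u[J](U^*)$, we appeal to Fermat's rule: since $J$ is locally Lipschitz continuous (Lemma~\ref{liplemma}) and $U^*$ is a global minimizer, the standard necessary optimality condition for the Clarke subdifferential yields $0\in\partial_u[J](U^*)$.

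To obtain the refined inclusion, we decompose $J=F+L+(R\circ D)$ and note that each summand is locally Lipschitz by Lemma~\ref{liplemma}. The Clarke sum rule gives
\[
\partial_u[J](U^*)\subset \partial_u[F](U^*)+\partial_u[L](U^*)+\partial_u[R\circ D](U^*).
\]
For the first summand, $F$ is continuously differentiable, so $\partial_u[F](U^*)=\{\lambda(U^*-B)\}$. For the second, $L(U)=\left\Vert U\right\Vert_*$ is convex, so its Clarke subdifferential coincides with its classical convex subdifferential $\partial_U[\left\Vert U\right\Vert_*](U^*)$. For the third, $D$ is a bounded linear operator, so the Clarke chain rule gives $\partial_u[R\circ D](U^*)\subset D^T\bar{\partial}_u[R](DU^*)$. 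Combining these inclusions yields the claimed formula.

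The main technical obstacle is managing the non-convexity of the regularization term $R$: the classical convex subdifferential calculus does not apply directly, and one must instead rely on the Clarke generalized gradient, for which the sum and chain rules hold only as set inclusions rather than equalities. The local Lipschitz property established in Lemma~\ref{liplemma} is precisely the hypothesis that legitimizes invoking these rules, and, fortunately, set inclusion is enough to extract the desired first-order necessary condition at~$U^*$.
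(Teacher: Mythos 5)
Your proof is correct and follows essentially the same route as the paper: strong convexity under \eqref{condition} gives existence and uniqueness of $U^*$, the generalized Fermat rule gives $0\in\partial_u[J](U^*)$, and Clarke calculus (sum rule plus the facts that the generalized gradient reduces to the gradient for $C^1$ functions and to the convex subdifferential for convex functions) gives the decomposed inclusion. If anything, your treatment is slightly more careful than the paper's, since you explicitly invoke the Clarke chain rule for the composition $R\circ D$ to produce the $D^T\bar{\partial}_u[R](DU^*)$ term, a step the paper leaves implicit.
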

\vskip2pt

\begin{proof}
	The functional $J$ is strongly convex when \eqref{condition} holds. Therefore, $J$ has a 
	unique minimizer $U^*$ and \eqref{rel1} follows from the generalized Fermat's rule 
	\cite{Fermat}. 
	
	The concept of a generalized gradient extends the concept of a subdifferential for 
	non-smooth convex functions to non-smooth non-convex functions that are locally Lipschitz.
	According to Lemma \ref{liplemma}, the functionals $J$, $\phi$, $L$, and $F$ are locally 
	Lipschitz. Therefore, the generalized gradient is defined.
	
	For non-smooth and convex functions, the Clarke generalized gradient equals the 
	subdifferential, i.e.,
	\[
	\partial_U [J](U^*)=\bar{\partial}_U [J](U^*).
	\]
	Hence, 
	\[
	\bar{\partial}_U [J](U^*)\subset \bar{\partial}_U [\phi](U^*) +\bar{\partial}_U 
	[\dfrac{\lambda}{2}\left\Vert U-B \right\Vert_F^2](U^*)+\bar{\partial}_U 
	[\left\Vert U-\right\Vert_*](U^*),
	\]
	which gives
	\[
	\bar{\partial}_U [J](U^*)\subset \bar{\partial}_U [\phi](U^*) +
	\bar{\partial}_U [\lambda\left( U-B \right)](U^*)+\partial_U 
	[\left\Vert U-\right\Vert_*](U^*),
	\]
	because in the case of a continuously differentiable function, the generalized gradient 
	reduces to the gradient.
\end{proof}
\vskip2pt

\begin{proposition}
	Let $F:\mathbb{R}^{n_1\times n_2}\to\mathbb{R}$ be the fidelity function, and let
	$B\in\mathbb{R}^{n_1\times n_2}$, $Q\in\mathbb{R}^{2n_1\times n_2}$, 
	$O\in \mathbb{R}^{n_1\times n_2}$, $\lambda\in\mathbb{R}^{*}_+$, $\gamma_1\in \mathbb{R}$, 
	and $\gamma_2\in \mathbb{R}$. Then the function
	\[
	F(U)- \dfrac{\gamma_1}{2}\left\Vert DU-Q \right\Vert_F^2 -\dfrac{\gamma_2}{2}
	\left\Vert U-O \right\Vert_F^2
	\]
	is convex if 
	\begin{equation}\label{convineq}
		\gamma_1\leq \dfrac{\lambda-\gamma_2}{8}.
	\end{equation}
	This inequality also is necessary if the inequality is to hold for all $n_1,n_2\geq 1$.
\end{proposition}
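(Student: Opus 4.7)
The plan is to recognize that $g(U) := F(U) - \frac{\gamma_1}{2}\|DU-Q\|_F^2 - \frac{\gamma_2}{2}\|U-O\|_F^2$ is a quadratic function of $U$, so convexity is equivalent to positive semidefiniteness of its Hessian. After expanding the three squared Frobenius norms and discarding the linear and constant parts, which do not affect convexity, the Hessian of $g$ is the symmetric linear operator on $\mathbb{R}^{n_1\times n_2}$
\[
\mathcal{H}:V \mapsto (\lambda - \gamma_2)V - \gamma_1 D^T D V,
\]
where $D^T D V = V C_1 C_1^T + C_2^T C_2 V$. Convexity of $g$ is therefore equivalent to
\[
\langle \mathcal{H}V, V\rangle = (\lambda - \gamma_2)\|V\|_F^2 - \gamma_1 \|DV\|_F^2 \geq 0 \quad \text{for all } V \in \mathbb{R}^{n_1\times n_2}.
\]

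Next, I would bound the operator norm of $D^T D$ using the circulant diagonalization $C_i = F_i^H \Lambda_i F_i$ already invoked in subproblem \eqref{subp 1}. Each eigenvalue of $C_1$ (resp.\ $C_2$) is of the form $-1+\zeta$ with $\zeta$ an $n_2$-th (resp.\ $n_1$-th) root of unity, so has modulus at most $2$. Unitarity of the $F_i$ then yields $\|V C_1\|_F^2 \leq 4\|V\|_F^2$ and $\|C_2 V\|_F^2 \leq 4\|V\|_F^2$, hence
\[
\|DV\|_F^2 = \|D_1 V\|_F^2 + \|D_2 V\|_F^2 \leq 8\|V\|_F^2.
\]
The sufficient half follows routinely: under $\gamma_1 \leq (\lambda-\gamma_2)/8$, if $\gamma_1 \geq 0$ the spectral bound gives $\gamma_1\|DV\|_F^2 \leq 8\gamma_1\|V\|_F^2 \leq (\lambda-\gamma_2)\|V\|_F^2$, while if $\gamma_1 < 0$ the term $-\gamma_1\|DV\|_F^2$ is already nonnegative and $(\lambda-\gamma_2)\|V\|_F^2 \geq 0$ holds in the only regime in which the hypothesis is meaningful.

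For the converse claim, which is to hold uniformly in $n_1, n_2 \geq 1$, I would exhibit a $V$ saturating the spectral bound $\|DV\|_F^2 = 8\|V\|_F^2$. Taking $n_1, n_2$ both even, the checkerboard $V_{ij} = (-1)^{i+j}$ is a common eigenvector of $C_1 C_1^T$ and $C_2^T C_2$ for their maximum eigenvalue $4$: a direct computation with periodic boundary conditions gives $(D_1 V)_{ij} = V_{i,j+1} - V_{ij} = -2(-1)^{i+j}$ and analogously $(D_2 V)_{ij} = -2(-1)^{i+j}$, so $\|D_1 V\|_F^2 = \|D_2 V\|_F^2 = 4\|V\|_F^2$. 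Substituting this $V$ into the quadratic form forces $\lambda - \gamma_2 - 8\gamma_1 \geq 0$. The main technical step is thus the spectral identification $\|D^T D\|_2 = 8$ together with the matching extremal direction; once these are in hand, both halves of the proposition follow with no further calculation.
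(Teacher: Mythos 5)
Your proposal is correct and follows essentially the same route as the paper: reduce convexity to positive semidefiniteness of the (constant) Hessian and bound the spectrum of $D^TD$ by $8$. Two details differ. For the spectral bound, you use the circulant/DFT diagonalization (eigenvalues of $C_i$ of the form $-1+\zeta$ with $|\zeta|=1$, hence modulus at most $2$), whereas the paper applies Gershgorin discs to $A=C_1C_1^T$; these give the same bound $\lambda_i(A)\in[0,4]$. Your treatment of the Hessian as the operator $V\mapsto(\lambda-\gamma_2)V-\gamma_1\left(VC_1C_1^T+C_2^TC_2V\right)$ is actually cleaner than the paper's reduction to $\lambda I-2\gamma_1A-\gamma_2I$, which implicitly identifies $C_1C_1^T$ with $C_2^TC_2$ even though they act on spaces of different dimensions when $n_1\neq n_2$. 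For necessity, your checkerboard $V_{ij}=(-1)^{i+j}$ (for even $n_1,n_2$) attains $\|DV\|_F^2=8\|V\|_F^2$ exactly, which is sharper than the paper's asymptotic statement that the largest eigenvalue ``converges to $4$''; in fact the value $4$ is attained exactly for even dimensions, so your argument pins down necessity without a limiting step. One shared caveat: both your sufficiency argument and the paper's implicitly assume $\gamma_1\geq 0$ (which is what the paper later needs, since $\gamma_1\geq\gamma_3\geq a>0$); for $\gamma_1<0$ the binding eigenvalue of $D^TD$ is $0$ rather than $8$, and the condition \eqref{convineq} alone would not suffice. Your remark that the hypothesis is only ``meaningful'' for $\gamma_1\geq 0$ papers over this in the same way the original proof does, so it is not a defect relative to the paper, but it would be worth making the sign assumption explicit.
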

\vskip2pt

\begin{proof}
	In order for the function $F$ to be convex, its Hessian $H$ has to be positive 
	semidefinite. Let $n=\min\{n_1, n_2\}$. The Hessian is given by 
	\[
	H=\lambda I_n -\gamma_1 D^TDI_{n} -\gamma_2 I_n,
	\]
	where $I_n$ denotes the identity matrix of order $n$. The matrix $H$ can be written as
	\[
	H=\lambda I_{n} -\gamma_1 C_1C_1^T - \gamma_1 C_2^TC_2 -\gamma_2 I_{n},
	\]
	where $C_1,C_2\in\mathbb{R}^{n\times n}$. It can be shown by a simple calculation that 
	$C_1C_1^T=C_2^TC_2$. Let $A=C_1C_1^T=C_2^TC_2$. Then the Hessian can be expressed as
	\[
	H=\lambda I_{n} -2\gamma_1 A -\gamma_2 I_{n}.
	\]
	Since the matrix $A$ is symmetric and positive semidefinite, it has the spectral 
	factorization 
	\[
	A=V\Lambda V^T,\qquad \Lambda={\rm diag}[\lambda_1(A),\ldots,\lambda_{n}(A)],
	\qquad VV^T=I,
	\]
	with eigenvalues $\lambda_{i}(A)\geq 0$ and orthogonal eigenvector matrix $V$. In order 
	for 
	\[
	H=V(\left(\lambda-\gamma_2\right)I_n - 2\gamma_1 \Lambda)V^T
	\]
	to be positive semidefinite, $\lambda$, $\gamma_1$, and $\gamma_2$ must satisfy
	\[
	\lambda-\gamma_2 - 2\gamma_1 \max_{i=1,2,\ldots,n}
	\left(\lambda_i\left(A\right)\right)\geq 0.
	\]
	Since 
	\[
	A=\begin{bmatrix}
		2 &-1& 0&\ldots & 0 & -1\\
		-1 &2& -1& 0& \ldots & 0\\
		0 & \ddots & \ddots & \ddots & \ldots & 0\\
		\ldots & \ddots & \ddots & \ddots & -1& 0\\
		\ldots & \ldots & \ddots &-1& 2& -1\\
		-1 & 0& \ldots &0& -1& 2
	\end{bmatrix}\in\mathbb{R}^{n\times n},
	\]
	we obtain by using Gershgorin discs that $\lambda_i(A)\in [0,4]$ for all $i$. It follows 
	that convexity is secured if \eqref{convineq} holds. The largest eigenvalue converges to
	$4$ as $n$ increases. Therefore \eqref{convineq} is necessary if we would like to 
	determine $\gamma_1$ independently of $n_1$ and $n_2$.
\end{proof}
\vskip2pt

\begin{proposition}\label{lagmul}
	For any Lagrangian multipliers $Q$ and $O$, we have that the augmented Lagrangian 
	functional $L_{\beta_1,\beta_2}(U,Z,M,Q,O)$ is proper, continuous, and coercive jointly in
	the primal variables $(U,Z,M)$. Moreover, 
	$L_{\beta_1,\beta_2}(U,Z,M,Q,O)$ is jointly convex in $(U,Z,M)$ if the penalty parameters 
	$\beta_1$ and $\beta_2$ satisfy 
	\begin{equation}\label{condition beta}
		\beta_1=\dfrac{\gamma_1\gamma_3}{\gamma_1 -\gamma_3},\quad
		\beta_2=\dfrac{\gamma_2\gamma_4}{\gamma_2 -\gamma_4},
	\end{equation}
	where
	\begin{equation}\label{condition gamma}
		(\gamma_1,\gamma_2,\gamma_3,\gamma_4)\in\mathbb{T}=\left\{ 
		(\gamma_1,\gamma_2,\gamma_3,\gamma_4)\in \mathbb{R}:\,\gamma_1\geq\gamma_3,\, \gamma_1\leq 
		\dfrac{\lambda-\gamma_2}{8},\, \gamma_2\geq\gamma_4\geq 0,\, \gamma_3\geq a \right\}.
	\end{equation}
	The relations \eqref{condition beta} are meaningful if $\gamma_1>\gamma_3$ and 
	$\gamma_2>\gamma_4$.
\end{proposition}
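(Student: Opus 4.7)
The plan is to handle the four claimed properties in order: properness, continuity, coerciveness, and joint convexity, with the last being the main event.

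Properness and continuity are immediate: the fidelity term, the two inner products, and the two quadratic coupling terms are $C^\infty$; $\phi(\cdot;T,a)$ is continuous by property~1 of $\phi$, so $\sum_{i,j}\phi(\|M_{ij}\|_2; T, a)$ is continuous in $M$; and the nuclear norm is continuous in $Z$. The functional is real-valued everywhere on the primal domain, hence proper. For joint coerciveness in $(U, Z, M)$ I would first complete the square in the two inner-product terms to write
\[
L_{\beta_1,\beta_2} = \tfrac{\lambda}{2}\|U-B\|_F^2 + \sum_{i,j}\phi(\|M_{ij}\|_2) + \|Z\|_* + \tfrac{\beta_1}{2}\left\|DU - M + \tfrac{Q}{\beta_1}\right\|_F^2 + \tfrac{\beta_2}{2}\left\|Z - U + \tfrac{O}{\beta_2}\right\|_F^2 - c(Q,O),
\]
where $c(Q,O)=\tfrac{1}{2\beta_1}\|Q\|_F^2+\tfrac{1}{2\beta_2}\|O\|_F^2$ is independent of the primal variables. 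Since $\phi \geq 0$ by inspection of the piecewise formula, the remaining five summands are nonnegative, and I would chase cases: if $\|U\|_F \to \infty$ the fidelity term diverges; if $\|U\|_F$ stays bounded but $\|M\|_F \to \infty$ then $DU + Q/\beta_1$ is bounded so the first coupling term grows like $\|M\|_F^2$; and if $\|U\|_F,\|M\|_F$ are bounded but $\|Z\|_F \to \infty$ then $\|Z\|_* \geq \|Z\|_F \to \infty$ using $\sum_i \sigma_i \geq (\sum_i \sigma_i^2)^{1/2}$ for nonnegative singular values.

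The heart of the proof is joint convexity. The strategy is to split the two $\beta$-quadratics by means of the algebraic identity
\[
\tfrac{\beta_1}{2}\|DU-M\|_F^2 = \tfrac{\gamma_1+\beta_1}{2}\left\|DU - \tfrac{\beta_1}{\gamma_1+\beta_1}M\right\|_F^2 + \tfrac{\gamma_3}{2}\|M\|_F^2 - \tfrac{\gamma_1}{2}\|DU\|_F^2,
\]
which holds exactly when $\gamma_3 = \gamma_1\beta_1/(\gamma_1+\beta_1)$, equivalent to the stated relation $\beta_1 = \gamma_1\gamma_3/(\gamma_1-\gamma_3)$; the mirror identity for $\tfrac{\beta_2}{2}\|Z-U\|_F^2$, with the roles of the two arguments swapped, produces residuals $-\tfrac{\gamma_2}{2}\|U\|_F^2$ and $+\tfrac{\gamma_4}{2}\|Z\|_F^2$ under $\beta_2 = \gamma_2\gamma_4/(\gamma_2-\gamma_4)$. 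After absorbing the inner products $\langle DU-M, Q\rangle$ and $\langle Z-U, O\rangle$ into the quadratics (an affine translation of $M$ by $-Q/\beta_1$ and of $U$ by $-O/\beta_2$, which preserves convexity), the two identities decompose $L_{\beta_1,\beta_2}$ up to an additive constant as a sum of five pieces: (i) $F(U) - \tfrac{\gamma_1}{2}\|DU\|_F^2 - \tfrac{\gamma_2}{2}\|U\|_F^2$, convex in $U$ alone by the preceding proposition applied with zero offsets and the inequality $\gamma_1 \leq (\lambda-\gamma_2)/8$ built into $\mathbb T$; (ii) a positive multiple of $\|DU - \tfrac{\beta_1}{\gamma_1+\beta_1}M\|_F^2$, jointly convex in $(U,M)$; (iii) the analogous positive quadratic in $(U,Z)$; (iv) $\sum_{i,j}\phi(\|M_{ij}\|_2) + \tfrac{\gamma_3}{2}\|M\|_F^2$, convex in $M$ since property~6 gives $\phi'' \geq -a$ where it is defined and $\gamma_3 \geq a$ in $\mathbb T$; and (v) $\|Z\|_* + \tfrac{\gamma_4}{2}\|Z\|_F^2$, convex in $Z$ since the nuclear norm is convex and $\gamma_4 \geq 0$. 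A sum of convex functions is convex, giving joint convexity in $(U,Z,M)$.

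The main obstacle is the bookkeeping behind the splitting: verifying the quadratic identity and its mirror, checking that the parameter relations $\beta_1 = \gamma_1\gamma_3/(\gamma_1-\gamma_3)$ and $\beta_2 = \gamma_2\gamma_4/(\gamma_2-\gamma_4)$ are exactly what is required for the residual $-\tfrac{\gamma_1}{2}\|DU\|_F^2 - \tfrac{\gamma_2}{2}\|U\|_F^2$ to be absorbed by $F(U)$ through the preceding proposition, and confirming that the affine translations used to hide $Q$ and $O$ do not disturb the decomposition. Once this accounting is in place, the convex/non-convex separation of $\phi$ via property~6 and the convexity of $\|\cdot\|_*$ finish pieces (iv) and (v), and no further deep ingredient is required.
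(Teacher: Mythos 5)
Your proposal is correct and follows essentially the same route as the paper: the same completion of squares for coercivity, and the same five-way decomposition for joint convexity, with piece (i) handled by the preceding Hessian proposition under $\gamma_1\le(\lambda-\gamma_2)/8$, pieces (iv) and (v) by $\gamma_3\ge a$ and $\gamma_4\ge 0$, and the penalty quadratics recombined into jointly convex squared norms exactly when $\beta_1=\gamma_1\gamma_3/(\gamma_1-\gamma_3)$ and $\beta_2=\gamma_2\gamma_4/(\gamma_2-\gamma_4)$. The only difference is presentational: you verify the recombination via an explicit splitting identity, whereas the paper matches the coefficients of $\tfrac12\Vert c_1DU-c_2M\Vert_F^2+\tfrac12\Vert c_3U-c_4Z\Vert_F^2$; the two are algebraically equivalent.
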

\vskip2pt

\begin{proof}
	It is clear that the function $L_{\beta_1,\beta_2}$ is proper and continuous. To show that
	$L_{\beta_1,\beta_2}$ is coercive jointly in $(U,Z,M)$, we note that
	\begin{eqnarray*}
		L_{\beta_1,\beta_2}(U,Z,M,Q,O)&=&F(U) + R(M) + \left\Vert Z\right\Vert_* + 
		\langle DU-M,Q \rangle + \dfrac{\beta_1}{2}\left\Vert DU-M\right\Vert_F^2 + 
		\langle U-Z,O \rangle \\
		&+& \dfrac{\beta_2}{2}\left\Vert U-Z\right\Vert_F^2\\
		&=& F(U) + R(M) + \left\Vert Z\right\Vert_* + \dfrac{\beta_1}{2}
		\left\Vert DU-M +\dfrac{Q}{\beta_1}\right\Vert_F^2 - 
		\dfrac{1}{2\beta_1}\left\Vert Q\right\Vert_F^2 \\
		&+& \dfrac{\beta_2}{2}\left\Vert U-Z+ \dfrac{O}{\beta_2}\right\Vert_F^2 - 
		\dfrac{1}{2\beta_2}\left\Vert O\right\Vert_F^2.
	\end{eqnarray*}
	Since $R(M)$ is bounded and the terms $\dfrac{1}{2\beta_2}\left\Vert O\right\Vert_F^2$ and 
	$\dfrac{1}{2\beta_1}\left\Vert Q\right\Vert_F^2$ are independent of $(U,Z,M)$, it follows
	that $R(M)$ and these terms do not affect the coercivity. The expression
	\[
	F(U)+\dfrac{\beta_1}{2}\left\Vert DU-W +\dfrac{Q}{\beta_1}\right\Vert_F^2 + 
	\dfrac{\beta_2}{2}\left\Vert U-Z+ \dfrac{O}{\beta_2}\right\Vert_F^2 
	\]
	is coercive with respect to $U$, the expression 
	\[
	\left\Vert Z\right\Vert_*+\dfrac{\beta_2}{2}\left\Vert U-Z+ 
	\dfrac{O}{\beta_2}\right\Vert_F^2 
	\]
	is coercive with respect to $Z$, and the expression
	\[
	\dfrac{\beta_1}{2}\left\Vert DU-W +\dfrac{Q}{\beta_1}\right\Vert_F^2 
	\]
	is coercive with respect to $W$. Therefore, $L_{\beta_1,\beta_2}(U,Z,M,Q,O)$ is jointly 
	coercive with respect to $(U,Z,M)$. 
	
	To show convexity, we express $L$ as
	\begin{eqnarray*}
		L(U,Z,M,Q,O)&=&F(U)-\dfrac{\gamma_1}{2}\left\Vert DU \right\Vert_F^2 - 
		\dfrac{\gamma_2}{2}\left\Vert U \right\Vert_F^2\\ 
		&+& R(M) + \dfrac{\gamma_3}{2}\left\Vert M\right\Vert_F^2\\
		&+& \left\Vert Z\right\Vert_* + \dfrac{\gamma_4}{2}\left\Vert Z\right\Vert_F^2\\
		&+& \langle DU-M,Q \rangle + \langle U-Z,O \rangle  \\
		&+&   \dfrac{\beta_2}{2}\left\Vert U-Z\right\Vert_F^2+\dfrac{\beta_1}{2}
		\left\Vert DU-M\right\Vert_F^2+ \dfrac{\gamma_1}{2}\left\Vert DU \right\Vert_F^2 + 
		\dfrac{\gamma_2}{2}\left\Vert U \right\Vert_F^2 -\dfrac{\gamma_3}{2}
		\left\Vert M\right\Vert_F^2-\dfrac{\gamma_4}{2}\left\Vert Z\right\Vert_F^2.
	\end{eqnarray*}
	Define the functions
	\[
	\begin{split}
		&L_1(U)=F(U)-\dfrac{\gamma_1}{2}\left\Vert DU \right\Vert_F^2 - 
		\dfrac{\gamma_2}{2}\left\Vert U \right\Vert_F^2,\\
		&L_2(M)=R(M) + \dfrac{\gamma_3}{2}\left\Vert M\right\Vert_F^2,\\
		&L_3(Z)=\left\Vert Z\right\Vert_* + \dfrac{\gamma_4}{2}\left\Vert Z\right\Vert_F^2,\\
		&L_4(U,M,Z)=\langle DU-M,Q \rangle + \langle U-Z,O \rangle,\\
		&L_5(U,Z,M)=\dfrac{\beta_1}{2}\left\Vert DU-M\right\Vert_F^2+ 
		\dfrac{\beta_2}{2}\left\Vert U-Z\right\Vert_F^2+
		\dfrac{\gamma_1}{2}\left\Vert DU \right\Vert_F^2 + 
		\dfrac{\gamma_2}{2}\left\Vert U \right\Vert_F^2-
		\dfrac{\gamma_3}{2}\left\Vert M\right\Vert_F^2-
		\dfrac{\gamma_4}{2}\left\Vert Z\right\Vert_F^2
	\end{split}
	\]
	and note that $L_5$ can be written as
	\[
	L_5(U,Z,M)=\dfrac{\beta_1 +\gamma_1}{2}\left\Vert DU\right\Vert_F^2 -
	\beta_1\langle DU,M\rangle + \dfrac{\beta_1-\gamma_3}{2}\left\Vert M \right\Vert_F^2 +
	\dfrac{\beta_2+\gamma_2}{2}\left\Vert U \right\Vert_F^2 + 
	\dfrac{\beta_2-\gamma_4}{2}\left\Vert Z\right\Vert_F^2 - \beta_2\langle U,Z\rangle.
	\]
	The functional $U\to L_1(U)$ is convex if
	\[
	\gamma_1 \leq \dfrac{\lambda-\gamma_2}{8}
	\]
	and $L_2$ is convex if the inequality
	\begin{equation}\label{abd}
		\gamma_3\geq a
	\end{equation}
	holds. Since the nuclear norm $\left\Vert Z\right\Vert_*$ is convex, the functional $L_3$
	is convex when $\gamma_4\geq 0$. In view of that the functional $L_4$ is affine with 
	respect to $U$, $M$, and $Z$, it does not affect the convexity of $L$. Finally, the 
	functional $L_5$ is jointly convex if it can be reduced to the form
	\[
	\dfrac{1}{2}\left\Vert c_1 DU -c_2 M\right\Vert_F^2 + 
	\dfrac{1}{2}\left\Vert c_3 U -c_4 Z\right\Vert_F^2
	\mbox{~~~for some coefficients~~~} c_1,c_2,c_3,c_4\geq 0.
	\]
	Thus, we require the coefficients of $\left\Vert DU\right\Vert_F^2$,
	$\left\Vert M \right\Vert_F^2$, $\left\Vert U \right\Vert_F^2$, and 
	$\left\Vert Z\right\Vert_F^2$ be positive, that the product of the square roots of 
	$c_1$ and $c_2$ equal the coefficient of $-\langle DU,M \rangle$, and that the product of 
	the square roots of $c_3$ and $c_4$ equal the coefficient of $-\langle U,Z \rangle$. This 
	implies that 
	\[
	\beta_1\geq-\gamma_1,~~\beta_1\geq\gamma_3,~~\beta_2\geq-\gamma_2,
	~~\beta_2\geq\gamma_4\geq 0,~~ \gamma_1\gamma_3=\beta_1(\gamma_1-\gamma_3),~~
	\gamma_2\gamma_4=\beta_2(\gamma_2-\gamma_4).
	\]
	Thus, $\beta_1$ and $\beta_2$ should satisfy
	\[
	\beta_1=\dfrac{\gamma_1\gamma_3}{\gamma_1 -\gamma_3},\qquad 
	\beta_2=\dfrac{\gamma_2\gamma_4}{\gamma_2 -\gamma_4},
	\]
	and the quadruple $(\gamma_1,\gamma_2,\gamma_3,\gamma_4)$ should satisfy 
	\eqref{condition gamma}. Moreover, the set $\mathbb{T}$ is non-empty.
\end{proof}
\vskip2pt

Proposition \ref{lagmul} shows that we can determine parameters $\beta_1$ and $\beta_2$ 
such that the functional $L_{\beta_1,\beta_2}$ is jointly convex in $(U,Z,M)$. Since 
\[
\gamma_1\geq\gamma_3,~~\gamma_2\geq\gamma_4,~~\gamma_1,\gamma_2,\gamma_3,\gamma_4>0,
\]
it follows from \eqref{condition beta} and \eqref{condition gamma} that there are 
parameters $\rho_1,\rho_2\geq 1$ such that 
\[
\gamma_1=\rho_1\gamma_3,\quad \gamma_2=\rho_2\gamma_4.
\]
Assume that $\rho_1,\rho_2>1$. Then condition \eqref{abd} yields
\[
\beta_1\geq\dfrac{a}{\rho_1-1}
\]
and 
\[
a\leq\gamma_1\leq\dfrac{\lambda-\gamma_2}{8}~~\Leftrightarrow~~\gamma_2\leq \lambda-8a.
\]
Thus,
\[
0< \beta_2\leq  \dfrac{\gamma_4\left(\lambda-8a \right)}{\gamma_2-\gamma_4}=
\dfrac{\rho_2\left(\lambda-8a \right)}{\rho_2-1}.
\]
Therefore, $\beta_1$ and $\beta_2$ satisfy the inequalities 
\begin{equation}\label{cond 3.9}
	\beta_1\geq\dfrac{a}{\rho_1-1},\quad 0< \beta_2\leq  
	\dfrac{\rho_2\left(\lambda-8a \right)}{\rho_2-1}.
\end{equation}
\vskip2pt

\begin{lemma}\label{lastlemma}
	Assume that $f=g+h$, where $g$ and $h$ are lower semi-continuous convex functions from 
	$\mathbb{R}^{n_1 \times n_2 }$ to $\mathbb{R}$, and $h$ is Gateau differentiable with 
	derivative $h'$. If $p^* \in \mathbb{R}^{n_1 \times n_2}$, then the following conditions 
	are equivalent:
	\begin{enumerate}
		\item $p^*\in \arg\inf_{p\in \mathbb{R}^{n_1 \times n_2}} f(p)$,
		\item $g(p)-g(p^*)+\langle h'(p^*), p-p^* \rangle\geq 0$, $\forall p \in 
		\mathbb{R}^{n_1 \times n_2}$.
	\end{enumerate}
	Moreover, when the function $g$ has a separable structure of the type 
	\[
	g(p)=g(p_1, p_2, p_3)=Q_1(p_1)+Q_2(p_2)+Q_3(p_3),
	\]
	where $p_1,\,p_2,\,p_3$ are independent variables, conditions $1.$ and $2.$ are 
	equivalent to 
	\[
	\left\lbrace
	\begin{array}{rcl}
		p_1^{*}&=&\arg\min_{p_1} f(p_1,p_2^*,p_3^*), \\
		p_2^{*}&=&\arg\min_{p_2} f(p_1^*,p_2,p_3^*), \\
		p_3^{*}&=&\arg\min_{p_3} f(p_1^*,p_2^*,p_3).
	\end{array}\right.
	\]
\end{lemma}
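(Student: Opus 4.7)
The plan is to first establish the equivalence $1.\Leftrightarrow 2.$ via the generalized Fermat rule for the sum of a nonsmooth convex function and a smooth convex function, and then deduce the block-coordinate reformulation by applying that equivalence to each of the three restricted subproblems and summing the resulting inequalities.

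For the first equivalence, I would invoke Fermat's rule: since $g$ and $h$ are convex with $h$ Gateaux differentiable, the subdifferential sum rule gives $\partial f(p^{*})=\partial g(p^{*})+\{h'(p^{*})\}$. Hence $p^{*}\in\arg\inf f$ iff $0\in\partial f(p^{*})$, iff $-h'(p^{*})\in\partial g(p^{*})$. By the defining subdifferential inequality of the convex function $g$ at $p^{*}$, this inclusion reads
\[
g(p)-g(p^{*})\geq \langle -h'(p^{*}),\, p-p^{*}\rangle,\qquad \forall p\in\mathbb{R}^{n_1\times n_2},
\]
which is precisely condition $2.$ after rearrangement.

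For the separable part, one direction is immediate: if $p^{*}$ globally minimizes $f$, then freezing any two blocks at their optimal values forces the remaining block to minimize the restriction, yielding the three block conditions. For the converse, I would apply the equivalence just established to each of the three one-block subproblems. For instance, the restricted function
\[
p_1\mapsto f(p_1,p_2^{*},p_3^{*})=Q_1(p_1)+\bigl(Q_2(p_2^{*})+Q_3(p_3^{*})\bigr)+h(p_1,p_2^{*},p_3^{*})
\]
is again the sum of a lower semicontinuous convex function (the first summand, up to an irrelevant constant) and a Gateaux differentiable convex function in $p_1$. The equivalence therefore yields
\[
Q_1(p_1)-Q_1(p_1^{*})+\langle \nabla_1 h(p^{*}),\, p_1-p_1^{*}\rangle\geq 0,\qquad\forall p_1,
\]
and two analogous inequalities hold for the other blocks.

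Summing the three block inequalities with arbitrary $(p_1,p_2,p_3)$ and using the separable form $g(p)=Q_1(p_1)+Q_2(p_2)+Q_3(p_3)$ together with the block decomposition of the inner product on the product space, one recovers
\[
g(p)-g(p^{*})+\langle h'(p^{*}),\, p-p^{*}\rangle\geq 0,
\]
which is condition $2.$, hence $p^{*}$ is a global minimizer of $f$. The only real obstacle, though minor, is the bookkeeping in this last step: identifying the full Gateaux derivative $h'(p^{*})$ with the triple of partial Gateaux derivatives $(\nabla_1 h(p^{*}),\nabla_2 h(p^{*}),\nabla_3 h(p^{*}))$ so that the per-block linear terms reassemble cleanly into $\langle h'(p^{*}),p-p^{*}\rangle$. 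This follows from the linearity of the Gateaux derivative in its directional argument on the product space, but it is the one place where the argument has to be carried out carefully.
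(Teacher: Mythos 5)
Your proof is correct and is essentially the standard convex-analysis argument (Fermat's rule plus the Moreau--Rockafellar sum rule for the equivalence, then block-wise application and summation for the separable case); the paper itself gives no proof, deferring to Chan et al.\ \cite{CNC}, where the same reasoning is used. The one delicate point you flag---identifying $h'(p^*)$ with the triple of partial Gateaux derivatives---is indeed the only bookkeeping step, and it is justified since $g$ and $h$ are finite-valued convex on a finite-dimensional space, so the subdifferential sum rule applies and the Gateaux derivative is linear in the direction, letting the three block inequalities reassemble into condition~2.
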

\vskip2pt

\begin{proof}
	The proposition can be shown similarly as in \cite{CNC}.
\end{proof}
\vskip2pt

\begin{theorem}
	For any pair of parameters $(\lambda,a)$ that satisfies condition \eqref{condition}, any 
	penalty parameters $\beta_1$ and $\beta_2$ that satisfy \eqref{condition beta}, and any
	Lagrange multipliers $Q$ and $O$, the augmented Lagrangian functional $L$ satisfies
	\begin{equation}
		(U^*,Z^*,M^*)\in \arg\min_{U,Z,M} L_{\beta_1,\beta_2}(U,Z,M,Q,O) \Leftrightarrow 
		\left\lbrace\begin{array}{cc}
			U^{*}=\arg\min_{U} L(U,Z^*,M^*,Q,O), &  \\
			Z^{*}=\arg\min_{Z} L(U^*,Z,M^*,Q,O), &   \\
			M^{*}=\arg\min_{M} L(U^*,Z^*,M,Q,O).
		\end{array}\right.
	\end{equation}
The saddle point problem \eqref{saddle} has at least one solution, and all solutions are 
of the form $(U^*,U^*,DU^*,Q^*,O^*)$, where $U^*$ is the unique global minimizer of the 
functional $J$.
\end{theorem}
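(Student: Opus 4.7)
My plan is to split the theorem into two essentially independent claims: first, the equivalence between joint and block-wise minimization; second, the form and existence of saddle points. The first relies on Lemma \ref{lastlemma}, the second on exploiting affine dependence in $(Q,O)$ and then running the optimality condition of $J$ both forward and backward.

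For the equivalence, I would decompose $L_{\beta_1,\beta_2}(U,Z,M,Q,O) = g(U,Z,M) + h(U,Z,M)$ with $g(U,Z,M) := \|Z\|_*$ (trivially separable in $(U,Z,M)$ with the nontrivial piece sitting in the $Z$-slot, convex, and lower semi-continuous) and $h$ collecting everything else. Convexity and Gateaux differentiability of $h$ come from reusing the regrouping in the proof of Proposition \ref{lagmul}: one writes $h = \tilde{L}_1(U) + \tilde{L}_2(M) + \frac{\gamma_4}{2}\|Z\|_F^2 + L_4 + L_5$, each summand being convex under \eqref{condition} and \eqref{condition beta} and each being $C^1$ (since $R$ inherits $C^1$-smoothness from $\phi$). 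Lemma \ref{lastlemma} then yields the equivalent system of three block-wise minimization problems.

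For the saddle point characterization, given any saddle point $(U^*,Z^*,M^*,Q^*,O^*)$, the left inequality of \eqref{saddle} forces the affine $(Q,O)$-dependence $\langle DU^*-M^*,Q\rangle + \langle Z^*-U^*,O\rangle$ to be bounded above on all of $\mathbb{R}^{2n_1\times n_2}\times\mathbb{R}^{n_1\times n_2}$, so $M^* = DU^*$ and $Z^* = U^*$. Substituting into the right inequality and reading off the block-wise first-order conditions via Part 1 gives
\[
O^* = \lambda(U^*-B) + D^T Q^*, \quad Q^* \in \bar{\partial}_U[R](DU^*), \quad -O^* \in \partial_U[\|U\|_*](U^*),
\]
and eliminating $(Q^*,O^*)$ produces the inclusion $0 \in \lambda(U^*-B) + D^T\bar{\partial}_U[R](DU^*) + \partial_U[\|U\|_*](U^*)$, which the earlier proposition identifies with the unique global minimizer of $J$. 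For the reverse direction, I would start from that unique minimizer $U^*$: its optimality condition supplies $Q^* \in \bar{\partial}_U[R](DU^*)$ and $\xi^* \in \partial_U[\|U\|_*](U^*)$ with $\lambda(U^*-B) + D^T Q^* + \xi^* = 0$, and setting $O^* := -\xi^*$, $Z^* := U^*$, $M^* := DU^*$ satisfies all three KKT relations. The right inequality of \eqref{saddle} then follows from joint convexity of $L_{\beta_1,\beta_2}$ (Proposition \ref{lagmul}) combined with Part 1, while the left inequality holds with equality because the $(Q,O)$-affine coefficients vanish at this primal triple.

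The point I expect to be delicate is the Clarke chain rule in the subdifferential identification: one cannot apply convex subdifferential calculus to $R\circ D$ since $R$ is non-convex, but local Lipschitz continuity of $R$ (Lemma \ref{liplemma}) makes $\bar{\partial}R$ well-defined, and the required inclusion was already established in the earlier proposition, so no new generalized-gradient calculation should be needed—the challenge is matching the direction of inclusion rather than producing it.
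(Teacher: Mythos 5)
Your proposal is correct in substance and uses the same key tool as the paper for the first claim, namely Lemma \ref{lastlemma}, but with a different $g/h$ split: the paper places \emph{both} non-smooth terms, $\|Z\|_*$ and $R(M)$, into the separable part $g$ (so $g_1,g_2,g_3$ absorb the multiplier inner products as well) and keeps only the two quadratic coupling terms $\frac{1}{2}\|c_1DU-c_2M\|_F^2+\frac{1}{2}\|c_3U-c_4Z\|_F^2$ in $h$, whereas you put only $\|Z\|_*$ in $g$ and push $R(M)$ into $h$. Your split works, but it obliges you to verify that $R$ is genuinely $C^1$ -- which holds because $\phi$ is quadratic on $[0,T)$ with $\phi'(0)=0$, so $M_{ij}\mapsto\phi(\|M_{ij}\|_2)$ is differentiable even at $M_{ij}=0$ -- a verification the paper's choice sidesteps entirely. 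For the second claim (existence and form of saddle points), the paper's proof of this theorem says nothing and the claim is re-stated as Theorem \ref{thm3.12} with the proof deferred to the CNC reference; your argument (affine $(Q,O)$-dependence forces $M^*=DU^*$, $Z^*=U^*$; then match the block-wise KKT relations with the optimality condition of $J$) is the standard one and supplies what the paper omits.

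The one step you rightly flag as delicate is also the one that needs repair as written: in the forward direction you derive $0\in\lambda(U^*-B)+D^T\bar{\partial}_u[R](DU^*)+\partial_U[\|\cdot\|_*](U^*)$ and invoke the earlier proposition to conclude that $U^*$ minimizes $J$, but that proposition only gives the inclusion $\partial_U[J](U^*)\subset(\text{the sum})$, which is the wrong direction for sufficiency. Two clean fixes are available. First, since $R$ is $C^1$ and $J_1=F+R\circ D$ is convex under \eqref{condition}, $\bar{\partial}_u[R](DU^*)$ is the singleton $\{\nabla R(DU^*)\}$ and the sum rule becomes an equality, so the inclusion direction is moot. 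Second, and more simply, you can bypass subdifferentials altogether: once $Z^*=U^*$ and $M^*=DU^*$ are known, $L_{\beta_1,\beta_2}(U^*,Z^*,M^*,Q^*,O^*)=J(U^*)$ and $L_{\beta_1,\beta_2}(U,U,DU,Q^*,O^*)=J(U)$ for every $U$, so the right-hand saddle inequality restricted to triples of the form $(U,U,DU)$ gives $J(U^*)\leq J(U)$ directly. Either route closes the gap; I would recommend the second, as it also makes the reverse direction symmetric.
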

\vskip2pt

\begin{proof}
	It follows from the proof of Proposition \ref{lagmul} that there is a quadruple 
	$(\gamma_1, \gamma_2, \gamma_3, \gamma_4)$ such that the functional
	\begin{eqnarray*}
		L_{\beta_1,\beta_2}\left(U,Z,M,Q,O\right)=F(U)-\dfrac{\gamma_1}{2}
		\left\Vert DU \right\Vert_F^2 -\dfrac{\gamma_2}{2}\left\Vert U\right\Vert_F^2 +
		R\left(M\right)+ \dfrac{\gamma_3}{2}\left\Vert M\right\Vert_F^2 + 
		\left\Vert Z\right\Vert_* + \dfrac{\gamma_4}{2}\left\Vert Z\right\Vert_F^2\\ + 
		\langle DU-M,Q \rangle + \langle U-Z,O \rangle + 
		\dfrac{1}{2}\left\Vert c_1 DU-c_2 M\right\Vert_F^2 + 
		\dfrac{1}{2}\left\Vert c_3 U -c_4 Z \right\Vert_F^2,
	\end{eqnarray*}
	is proper, continuous, coercive, and convex jointly in the variables $(U,Z,M)$. We can 
	express $L$ as
	\[
	L_{\beta_1,\beta_2}\left(U,Z,M,Q,O\right)=g(U,Z,M) +h(U,Z,M),
	\]
	where
	\[
	g(U,Z,M)=g_1(U)+g_2(Z)+g_3(M),\quad
	h(U,Z,M)=\dfrac{1}{2}\left\Vert c_1 DU-c_2 M\right\Vert_F^2 + 
	\dfrac{1}{2}\left\Vert c_3 U -c_4 Z \right\Vert_F^2
	\]
	with 
	\begin{eqnarray*}
		&g_1(U)&=F(U)-\dfrac{\gamma_1}{2}\left\Vert DU \right\Vert_F^2 -
		\dfrac{\gamma_2}{2}\left\Vert U\right\Vert_F^2+\langle DU,Q\rangle +\langle U,O\rangle,\\
		&g_2(Z)&=\left\Vert Z\right\Vert_*+\dfrac{\gamma_4}{2}\left\Vert Z\right\Vert_F^2-
		\langle Z,O \rangle,\\
		&g_3(M)&=R(M)+\dfrac{\gamma_3}{2}\left\Vert M\right\Vert_F^2 -\langle M,Q \rangle.
	\end{eqnarray*}
	The functions $g$ and $h$ are semi-continuous and convex. Moreover, $h$ is Gatteau 
	differentiable. Therefore, Lemma \ref{lastlemma} yields the desired result.
\end{proof}
\vskip2pt

\begin{theorem}\label{thm3.12}
	Let the parameters $\left(\lambda,a\right)$ satisfy \eqref{condition} and let the 
	Lagrangian parameters $(\beta_1,\beta_2)$ satisfy \eqref{condition beta}. Then the saddle 
	point problem \eqref{saddle} admits at least one solution, and all the solutions have the 
	form $(U^*,U^*, DU^*,Q^*,O^*)$, where $U^*$ denotes the unique minimizer of \eqref{main3}. 
	Furthermore, $DU^*=M^*$ and $U^*=Z^*$.
\end{theorem}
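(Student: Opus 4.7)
The plan is to derive Theorem \ref{thm3.12} as a direct consolidation of the preceding theorem together with the uniqueness of the minimizer of $J$. Under condition \eqref{condition}, the functional $J$ is $\mu$-strongly convex with $\mu = \lambda - 9a > 0$, so it admits a unique global minimizer $U^*$; and under \eqref{condition beta}, Proposition \ref{lagmul} makes $L_{\beta_1,\beta_2}(\cdot,Q,O)$ jointly convex, continuous, and coercive in the primal triple $(U,Z,M)$ for every fixed pair $(Q,O)$. These two facts are exactly what activate the hypotheses of the preceding theorem, and Theorem \ref{thm3.12} will be obtained by reading off its conclusions in the present parameter regime.

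Next, I would use the dual inequality in the saddle-point definition \eqref{saddle} to pin down the form of any saddle point. Since $L_{\beta_1,\beta_2}$ is affine in $(Q,O)$ with gradients $DU^* - M^*$ and $U^* - Z^*$, the requirement that $(Q^*,O^*)$ maximize $L_{\beta_1,\beta_2}(U^*,Z^*,M^*,\cdot,\cdot)$ over all dual variables forces the primal-feasibility identities $M^* = DU^*$ and $Z^* = U^*$. Substituting these into the augmented Lagrangian annihilates the penalty and duality terms, reducing $L_{\beta_1,\beta_2}(U^*,U^*,DU^*,Q^*,O^*)$ exactly to $J(U^*;\lambda,T,a)$. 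The primal inequality of \eqref{saddle} then asserts that $U^*$ minimizes $J$, and strong convexity makes this minimizer unique, which yields the prescribed shape of every solution.

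For existence, I would start from the unique minimizer $U^*$ of $J$, set $Z^* := U^*$ and $M^* := DU^*$, and construct admissible dual variables by invoking the generalized Fermat rule established earlier, namely
\[
0 \in D^T \bar{\partial}_u[R](DU^*) + \lambda(U^* - B) + \partial_U[\|\cdot\|_*](U^*).
\]
Selecting elements from these generalized subdifferentials yields explicit multipliers $Q^* \in \mathbb{R}^{2n_1 \times n_2}$ and $O^* \in \mathbb{R}^{n_1 \times n_2}$ such that $(U^*,U^*,DU^*)$ is a joint unconstrained minimizer of $L_{\beta_1,\beta_2}(\cdot,Q^*,O^*)$; combined with the previous paragraph, this produces a saddle point of the required form and completes existence.

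The main obstacle is precisely the dual-multiplier construction, because the regularizer $R$ is non-convex through $\phi$ and a classical Karush-Kuhn-Tucker argument does not apply off the shelf. The resolution is the device already used in the earlier proposition: since $R$ is locally Lipschitz by Lemma \ref{liplemma}, its Clarke generalized gradient at $DU^*$ is non-empty and plays the role of the convex subdifferential in the first-order optimality system, so a valid $Q^*$ can always be extracted. With this in hand, the coincidences $DU^* = M^*$ and $U^* = Z^*$ recorded in the theorem follow automatically from the feasibility identities derived in the second paragraph.
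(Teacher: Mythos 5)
Your argument is correct and follows essentially the same route as the paper, whose own ``proof'' consists only of the remark that it is similar to the argument in \cite{CNC}: the affine dependence of $L_{\beta_1,\beta_2}$ on $(Q,O)$ in the left saddle-point inequality forces the feasibility identities $M^*=DU^*$ and $Z^*=U^*$, the right inequality restricted to feasible triples $(U,U,DU)$ identifies $U^*$ with the unique minimizer of the strongly convex functional $J$, and existence of admissible multipliers follows by selecting elements of the Clarke and nuclear-norm subdifferentials realizing the generalized Fermat rule. The one step you should make explicit is that joint global minimality of $(U^*,U^*,DU^*)$ for $L_{\beta_1,\beta_2}(\cdot,\cdot,\cdot,Q^*,O^*)$ is not a consequence of Clarke stationarity alone but of the joint convexity secured by Proposition \ref{lagmul} under \eqref{condition beta} together with the equivalence between joint and coordinate-wise minimization from Lemma \ref{lastlemma}, which you invoke only implicitly.
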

\vskip2pt

\begin{proof}
	The proof is similar to the proof presented in \cite{CNC}.  
\end{proof}
\vskip2pt

Let $(U^*,Z^*,M^*,Q^*,O^*)$ solve the saddle point problem \eqref{saddle}. Then
\begin{eqnarray*}
	U^*&=&\arg\min_{U}L_{\beta_1,\beta_2}\left(U,Z^*,M^*,Q^*,O^*\right)=
	L_{\beta_1,\beta_2|U}(U),\\
	Z^*&=&\arg\min_{Z}L_{\beta_1,\beta_2}\left(U^*,Z,M^*,Q^*,O^*\right)=
	L_{\beta_1,\beta_2|Z}(Z),\\
	M^*&=&\arg\min_{M}L_{\beta_1,\beta_2}\left(U^*,Z^*,M,Q^*,O^*\right)=
	L_{\beta_1,\beta_2|M}(M),
\end{eqnarray*}
where $L_{\beta_1,\beta_2|U}$, $L_{\beta_1,\beta_2|Z}$, and $L_{\beta_1,\beta_2|M}$ are 
restrictions of the functional $L_{\beta_1,\beta_2}$ to $U$, $Z$, and $M$, respectively.
These functionals can be written as 
\begin{eqnarray*}
	L_{\beta_1,\beta_2|U}\left(U\right)&=&\left[F(U)-
	\dfrac{\beta_1'}{2}\left\Vert DU-M^* \right\Vert_F^2 - 
	\dfrac{\beta_2'}{2}\left\Vert Z^* -U \right\Vert_F^2\right]\\
	&+& \left[\langle DU,Q^* \rangle + \langle U,O^* \rangle
	+ \dfrac{\beta_1+\beta_1'}{2} \left\Vert DU-M^* \right\Vert_F^2 + 
	\dfrac{\beta_2+\beta_2'}{2} \left\Vert Z^*-U \right\Vert_F^2\right],\\
	L_{\beta_1,\beta_2|Z}\left(Z\right)&=&\left[\left\Vert Z\right\Vert_* + 
	\dfrac{\beta_2''}{2}\left\Vert Z -U^* \right\Vert_F^2 \right] + 
	\left[-\langle Z,O^* \rangle + \dfrac{\beta_2-\beta_2''}{2} 
	\left\Vert Z-U^* \right\Vert_F^2\right], \\
	L_{\beta_1,\beta_2|M}\left(M\right)&=&\left[R\left(M\right) + 
	\dfrac{\beta_1''}{2}\left\Vert DU^*-M \right\Vert_F^2\right] +\left[-\langle M,Q^* \rangle
	+ \dfrac{\beta_1-\beta_1''}{2} \left\Vert DU^*-M\right\Vert_F^2\right],
\end{eqnarray*}

In order to apply Lemma \ref{lastlemma} to the functionals $L_{\beta_1,\beta_2|U}$, 
$L_{\beta_1,\beta_2|Z}$, and $L_{\beta_1,\beta_2|M}$, we have to verify that the first 
and second parts of each functional are semi-continuous and convex, and that the second 
part is Gateau differentiable. For this to hold, the parameters 
$\beta_1',\beta_1'',\beta_2',\beta_2''$ have to satisfy the conditions 
\begin{equation}\label{cond 3.10}
	-8\beta_1 -\beta_2 \leq 8\beta_1'+\beta_2' \leq \lambda,~~ 0\leq \beta_2'' \leq \beta_2,~~ a\leq \beta_1'' \leq \beta_1.
\end{equation}
We obtain
\begin{eqnarray}
	\label{condition optimality U}
	&&F(U)-F(U^*) -\dfrac{\beta_1'}{2}\left\Vert DU -M^*\right\Vert_F^2 + 
	\dfrac{\beta_1'}{2}\left\Vert DU^* -M^*\right\Vert_F^2 - \dfrac{\beta_2'}{2}\left\Vert Z^*
	-U \right\Vert_F^2 + \dfrac{\beta_2'}{2}\left\Vert Z^* -U^* \right\Vert_F^2  \\
	&&+ \langle D^TQ^* + O^* + (\beta_1+\beta_1')(D^TDU^* -D^TM^*) + (\beta_2 + \beta_2')
	(U^* -Z^*), U-U^*\rangle \geq 0,\; \forall U\in \mathbb{R}^{n_1 \times n_2}, \nonumber \\[2mm]
	\label{condition optimality Z}
	&&\left\Vert Z \right\Vert_* - \left\Vert Z^* \right\Vert_* + \dfrac{\beta_2''}{2}\left\Vert Z-U^* \right\Vert_F^2 - \dfrac{\beta_2''}{2}\left\Vert Z^*-U^* \right\Vert_F^2\\
	&&+ \langle -O^* +(\beta_2 -\beta_2'')\left(Z^*-U^*\right),Z-Z^*\rangle \geq 0,\; \forall Z\in \mathbb{R}^{n_1 \times n_2},\nonumber \\[2mm]
	\label{condition optimality M}
	& & R\left( M \right) - R\left( M^* \right) + \dfrac{\beta_1''}{2}\left\Vert DU^*-M \right\Vert_F^2 - \dfrac{\beta_1''}{2}\left\Vert DU^*-M^* \right\Vert_F^2\\
	&&+ \langle -Q^* +(\beta_1 -\beta_1'')\left(M^*-DU^*\right),M-M^*\rangle \geq 0,\; \forall U\in \mathbb{R}^{2n_1 \times n_2}.\nonumber
\end{eqnarray}
\vskip2pt

In the following, let $\left(U^*,Z^*,DU^*,Q^*,O^*\right)$ be a solution of the saddle 
point problem \eqref{saddle}. Introduce the variables
\[
\widetilde{U}^k=U^k -U^*,~~ \widetilde{Z}^k=Z^k -Z^*,~~ \widetilde{M}^k=M^k - M^*,~~ 
\widetilde{Q}^k=Q^k -Q^*,~~ \widetilde{O}^k=O^k -O^*, 
\]
where $\left(U^k,Z^k,DU^k,Q^k,O^k\right)$, $k=1,2,\ldots~$, is the sequence generated by 
Algorithm \ref{Alg 1}. The following propositions are important for showing convergence of
this sequence to the solution of the saddle point of \eqref{saddle}.
\vskip3pt
\begin{proposition}\label{penultprop}
	For any parameter pairs $(\lambda,a)$ that satisfies \eqref{condition}, and parameter 
	pairs $\left(\beta_1,\beta_2\right)$ such that
	\begin{equation}\label{eq beta1beta2}
		\beta_1>\max\{ \dfrac{a}{\rho_1-1}, \dfrac{2a\rho_1}{(\rho_1-1)^2} \},\;  0<\beta_2<\min\{ \dfrac{\rho_2(\lambda-8a)}{(\rho_2-1)}, \dfrac{2\rho_2(\lambda-8a)}{(\rho_2-1)^2}   \}  \text{ with } 
		\rho_1>1 \text{ and } \rho_2>3,
	\end{equation}
	we have 
	\begin{eqnarray}
		& & \dfrac{1}{2\beta_1} \left(\left\Vert \widetilde{Q}^{k} \right\Vert_F^2 - \left\Vert \widetilde{Q}^{k+1} \right\Vert_F^2\right)+\dfrac{1}{2\beta_2} \left(\left\Vert \widetilde{O}^{k} \right\Vert_F^2 - \left\Vert \widetilde{O}^{k+1} \right\Vert_F^2\right)\\
		&\geq&\left\Vert c_1 D\widetilde{U}^{k+1}-c_2\widetilde{M}^{k+1} \right\Vert_F^2+\left\Vert c_3 \widetilde{U}^{k+1}-c_4 \widetilde{Z}^{k+1} \right\Vert_F^2 -\beta_1\langle D\widetilde{U}^{k+1},\widetilde{M}^{k}-\widetilde{M}^{k+1} \rangle - \beta_2 \langle \widetilde{U}^{k+1},\widetilde{Z}^{k}-\widetilde{Z}^{k+1} \rangle,\nonumber\\
		&+& \dfrac{\beta_3}{2}\left\Vert D\widetilde{U}^{k+1}-\widetilde{M}^{k+1}\right\Vert_F^2 + \dfrac{\beta_4}{2}\left\Vert \widetilde{U}^{k+1}-\widetilde{Z}^{k+1}\right\Vert_F^2,\nonumber
	\end{eqnarray}
	for certain values of $\beta_3$ and $\beta_4$, and for specific values of $c_1,c_2,c_3,c_4$ 
	to be determined. 
\end{proposition}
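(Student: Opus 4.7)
The plan is to derive the inequality by combining two families of variational inequalities and then exploiting the multiplier update rules. The first family consists of the first-order optimality conditions for the subproblem minimizers $U^{k+1}, Z^{k+1}, M^{k+1}$: each of these iterates solves a restricted Lagrangian of the same form as $L_{\beta_1,\beta_2|U}, L_{\beta_1,\beta_2|Z}, L_{\beta_1,\beta_2|M}$ but with $(Z^*, M^*, Q^*, O^*)$ replaced by the appropriate iterates, so Lemma \ref{lastlemma} yields three variational inequalities analogous to \eqref{condition optimality U}--\eqref{condition optimality M}. I would test each of them with the saddle-point values $U^*, Z^*, M^*$. The second family consists of the saddle-point inequalities \eqref{condition optimality U}--\eqref{condition optimality M} themselves, which I would test with the current iterates $U^{k+1}, Z^{k+1}, M^{k+1}$. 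Adding the six resulting inequalities causes the non-smooth/non-convex contributions $F(U^{k+1})-F(U^*)$, $R(M^{k+1})-R(M^*)$, and $\|Z^{k+1}\|_*-\|Z^*\|_*$ to cancel in opposite-signed pairs, leaving only quadratic forms in the error variables and linear pairings with the multiplier errors $\widetilde Q^k,\widetilde Q^{k+1},\widetilde O^k,\widetilde O^{k+1}$.

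Next, I would invoke the multiplier update rules \eqref{subp 4}--\eqref{subp 5} together with the saddle-point identities $DU^*=M^*$ and $U^*=Z^*$ from Theorem \ref{thm3.12}, which give
\[
\widetilde Q^{k+1}=\widetilde Q^{k}+\beta_1(D\widetilde U^{k+1}-\widetilde M^{k+1}),\qquad
\widetilde O^{k+1}=\widetilde O^{k}+\beta_2(\widetilde U^{k+1}-\widetilde Z^{k+1}).
\]
Using these relations I would eliminate every occurrence of $Q^*, O^*$ from the combined inequality, producing inner products of the type $\langle \widetilde Q^{k+1}-\widetilde Q^k,\cdot\rangle$ and $\langle \widetilde O^{k+1}-\widetilde O^k,\cdot\rangle$. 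The polarization identity $2\langle a,a-b\rangle=\|a\|_F^2-\|b\|_F^2+\|a-b\|_F^2$ then converts these into the telescoping terms $\|\widetilde Q^k\|_F^2-\|\widetilde Q^{k+1}\|_F^2$ and $\|\widetilde O^k\|_F^2-\|\widetilde O^{k+1}\|_F^2$ that appear on the left-hand side, at the price of leaving behind extra $\beta_1\|D\widetilde U^{k+1}-\widetilde M^{k+1}\|_F^2$ and $\beta_2\|\widetilde U^{k+1}-\widetilde Z^{k+1}\|_F^2$ terms, plus the crossed inner products $\beta_1\langle D\widetilde U^{k+1},\widetilde M^k-\widetilde M^{k+1}\rangle$ and $\beta_2\langle\widetilde U^{k+1},\widetilde Z^k-\widetilde Z^{k+1}\rangle$, which reflect the Gauss--Seidel nature of ADMM (the $U$-update uses $M^k,Z^k$ rather than $M^{k+1},Z^{k+1}$) and end up explicitly on the right-hand side of the claimed inequality.

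Finally, the residual quadratic form in $\widetilde U^{k+1},\widetilde M^{k+1},\widetilde Z^{k+1}$ has to be repackaged as $\|c_1 D\widetilde U^{k+1}-c_2\widetilde M^{k+1}\|_F^2+\|c_3\widetilde U^{k+1}-c_4\widetilde Z^{k+1}\|_F^2+\tfrac{\beta_3}{2}\|D\widetilde U^{k+1}-\widetilde M^{k+1}\|_F^2+\tfrac{\beta_4}{2}\|\widetilde U^{k+1}-\widetilde Z^{k+1}\|_F^2$. To achieve this I would select the auxiliary parameters $\beta_1',\beta_1'',\beta_2',\beta_2''$ inside the allowed window \eqref{cond 3.10} and then match coefficients: the requirement that both quadratic blocks be perfect squares with nonnegative $c_i$ fixes the relationship $c_1c_2=\beta_1-\beta_3$ and $c_3c_4=\beta_2-\beta_4$, while positivity of the coefficients of $\|D\widetilde U^{k+1}\|_F^2, \|\widetilde M^{k+1}\|_F^2, \|\widetilde U^{k+1}\|_F^2, \|\widetilde Z^{k+1}\|_F^2$ translates, after substituting $\gamma_1=\rho_1\gamma_3, \gamma_2=\rho_2\gamma_4$ from the proof of Proposition \ref{lagmul}, into exactly the strengthened bounds \eqref{eq beta1beta2} with $\rho_1>1$ and $\rho_2>3$. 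The main obstacle is this last bookkeeping step: one must simultaneously satisfy the perfect-square factorization constraints, the sign constraints \eqref{cond 3.10}, and the convexity/coercivity constraints from \eqref{condition gamma}, and showing that the feasible region is non-empty precisely under \eqref{eq beta1beta2} is the technical heart of the argument. Once this parameter selection is in place, the inequality follows by collecting terms.
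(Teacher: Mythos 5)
Your proposal follows essentially the same route as the paper's proof in Appendix~\ref{app A}: you cross-test the saddle-point variational inequalities \eqref{condition optimality U}--\eqref{condition optimality M} against their iterate analogues \eqref{condition optimality Uk}--\eqref{condition optimality Mk}, cancel the nonsmooth terms $F$, $R$, and $\|\cdot\|_*$ in opposite-signed pairs, use the multiplier updates together with $DU^*=M^*$, $U^*=Z^*$ and a polarization identity to produce the telescoping $\widetilde{Q}$, $\widetilde{O}$ terms, and then match coefficients to recast the residual quadratic form as the two perfect squares plus the $\beta_3,\beta_4$ terms under the parameter constraints \eqref{cond 3.10}, \eqref{eq beta''}, which is exactly where the conditions $\rho_1>1$, $\rho_2>3$ emerge in the paper as well. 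The only cosmetic difference is the order of operations (the paper establishes the telescoping identity first and combines at the end), plus a minor bookkeeping slip in your sketch where the cross-term matching should read $2c_1c_2=\beta_1-\beta_3$ and $2c_3c_4=\beta_2-\beta_4$ rather than omitting the factor of $2$.
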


\medskip

See Appendix \ref{app A} for a proof. 

\medskip

\begin{proposition}\label{lastprop}
	Let $\left(\beta_1,\beta_2\right)$ satisfy \eqref{eq beta1beta2} and let 
	$\left(\beta_3,\beta_4\right)$ and $\left(\beta_1'',\beta_2''\right)$ satisfy 
	\eqref{eq beta3beta4} and \eqref{eq beta''}, respectively. Assume that the coefficients
	$c_1,c_2,c_3,c_4$ satisfy \eqref{eq c}. Then 
	\begin{eqnarray*}
		& & \dfrac{1}{2\beta_1} \left(\left\Vert \widetilde{Q}^{k} \right\Vert_F - 
		\left\Vert \widetilde{Q}^{k+1} \right\Vert_F\right)+\dfrac{1}{2\beta_2} 
		\left(\left\Vert \widetilde{O}^{k} \right\Vert_F - \left\Vert \widetilde{O}^{k+1}
		\right\Vert_F\right)\\
		&\geq&\left\Vert c_1 D\widetilde{U}^{k+1}-c_2\widetilde{M}^{k+1} \right\Vert_F^2+
		\left\Vert c_3 \widetilde{U}^{k+1}-c_4 \widetilde{Z}^{k+1} \right\Vert_F^2 + 
		\dfrac{\beta_3}{2}\left\Vert D\widetilde{U}^{k+1}-\widetilde{M}^{k+1}\right\Vert_F^2 + 
		\dfrac{\beta_4}{2}\left\Vert \widetilde{U}^{k+1}-\widetilde{Z}^{k+1}\right\Vert_F^2 \\
		&+& \dfrac{\beta_1}{2}\left( -\left\Vert\widetilde{M}^{k} \right\Vert_F^2 + 
		\left\Vert \widetilde{M}^{k+1}\right\Vert_F^2\right) +(\dfrac{3\beta_1}{2}-\beta_1'')
		\left\Vert \widetilde{M}^{k+1}-\widetilde{M}^k\right\Vert_F^2 \nonumber\\
		&+& \dfrac{\beta_2}{2}\left( -\left\Vert\widetilde{Z}^{k} \right\Vert_F^2 + 
		\left\Vert \widetilde{Z}^{k+1}\right\Vert_F^2\right) + (\dfrac{3\beta_2}{2}-\beta_2'')
		\left\Vert \widetilde{Z}^{k+1}-\widetilde{Z}^{k} \right\Vert_F^2.  
	\end{eqnarray*}
\end{proposition}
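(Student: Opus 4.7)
Starting from Proposition \ref{penultprop}, the objective is to lower-bound the two cross products
$-\beta_1\langle D\widetilde{U}^{k+1},\widetilde{M}^k-\widetilde{M}^{k+1}\rangle$ and
$-\beta_2\langle \widetilde{U}^{k+1},\widetilde{Z}^k-\widetilde{Z}^{k+1}\rangle$ by the telescoping-plus-slack
quantities appearing in the statement. My plan is to handle the $M$- and $Z$-blocks in parallel using three
ingredients: (i) the ADMM multiplier updates, which recast the mixed products as differences of
dual residuals; (ii) the polarization identity, which produces the telescoping structure in
$\|\widetilde{M}^{k}\|_F^2$ and $\|\widetilde{Z}^{k}\|_F^2$; and (iii) the first-order optimality of the
$M$- and $Z$-subproblems, which, together with Proposition \ref{lagmul}, gives monotonicity-type control
of the residual inner products.

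For the $M$-block, from $Q^{k+1}=Q^k+\beta_1(DU^{k+1}-M^{k+1})$ and the saddle-point identity
$DU^{*}=M^{*}$ (Theorem \ref{thm3.12}) I would write $\beta_1 D\widetilde{U}^{k+1}
=\beta_1\widetilde{M}^{k+1}+(\widetilde{Q}^{k+1}-\widetilde{Q}^{k})$ and substitute it to get
\[
-\beta_1\langle D\widetilde{U}^{k+1},\widetilde{M}^k-\widetilde{M}^{k+1}\rangle
=\beta_1\langle \widetilde{M}^{k+1},\widetilde{M}^{k+1}-\widetilde{M}^{k}\rangle
+\langle \widetilde{Q}^{k+1}-\widetilde{Q}^{k},\widetilde{M}^{k+1}-\widetilde{M}^{k}\rangle .
\]
The polarization identity converts the first summand into
$\tfrac{\beta_1}{2}(\|\widetilde{M}^{k+1}\|_F^2-\|\widetilde{M}^{k}\|_F^2)+\tfrac{\beta_1}{2}\|\widetilde{M}^{k+1}-\widetilde{M}^{k}\|_F^2$,
which already matches the telescoping piece in the conclusion. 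For the residual inner product, the optimality
of the $M$-subproblem together with the $Q$-update yields $Q^{k+1}\in\bar{\partial}R(M^{k+1})$ in the Clarke sense,
and Proposition \ref{lagmul} (with $\beta_1''\geq a$) guarantees convexity of $R+\tfrac{\beta_1''}{2}\|\cdot\|_F^2$; monotonicity of the associated subgradient delivers
$\langle \widetilde{Q}^{k+1}-\widetilde{Q}^{k},\widetilde{M}^{k+1}-\widetilde{M}^{k}\rangle
\geq -\beta_1''\|\widetilde{M}^{k+1}-\widetilde{M}^{k}\|_F^2$.
The $Z$-block is treated symmetrically by writing
$\beta_2\widetilde{U}^{k+1}=\beta_2\widetilde{Z}^{k+1}+(\widetilde{O}^{k+1}-\widetilde{O}^{k})$,
applying polarization in $\widetilde{Z}$, and invoking the optimality
$-O^{k}+\beta_2(U^{k+1}-Z^{k+1})\in\partial\|Z^{k+1}\|_{*}$ together with convexity of the nuclear norm and the parameter $\beta_2''\geq 0$.

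The main obstacle is reconciling the coefficient $\tfrac{3\beta_i}{2}$ of $\|\widetilde{X}^{k+1}-\widetilde{X}^{k}\|_F^2$
($X\in\{M,Z\}$, $i\in\{1,2\}$) in the stated inequality: the polarization contribution is only
$\tfrac{\beta_i}{2}$, so an additional $\beta_i\|\widetilde{X}^{k+1}-\widetilde{X}^{k}\|_F^2$ must be
extracted from the dual residuals. My plan for this is to apply Young's inequality with a carefully tuned
parameter to $\langle \widetilde{Q}^{k+1}-\widetilde{Q}^{k},\widetilde{M}^{k+1}-\widetilde{M}^{k}\rangle$,
converting part of it into
$\|\widetilde{Q}^{k+1}-\widetilde{Q}^{k}\|_F^2=\beta_1^{2}\|D\widetilde{U}^{k+1}-\widetilde{M}^{k+1}\|_F^2$,
which can then be absorbed against a portion of the slack
$\tfrac{\beta_3}{2}\|D\widetilde{U}^{k+1}-\widetilde{M}^{k+1}\|_F^2$ already on the right-hand side of
Proposition \ref{penultprop}; the analogous surgery handles the $Z$-block using the slack
$\tfrac{\beta_4}{2}\|\widetilde{U}^{k+1}-\widetilde{Z}^{k+1}\|_F^2$. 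The parameter conditions
\eqref{eq beta1beta2}, \eqref{eq beta3beta4}, \eqref{eq beta''} and the choices \eqref{eq c} of
$c_1,\ldots,c_4$ are exactly what keeps every residual coefficient non-negative after this absorption;
substituting the two resulting lower bounds into Proposition \ref{penultprop} and collecting terms yields the claim.
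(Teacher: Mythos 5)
Your first two ingredients are sound and in fact reproduce, in a more compact algebraic form, what the paper does in Appendix \ref{app B}: the substitution $\beta_1 D\widetilde{U}^{k+1}=\beta_1\widetilde{M}^{k+1}+(\widetilde{Q}^{k+1}-\widetilde{Q}^{k})$, polarization, and the hypomonotonicity bound $\langle \widetilde{Q}^{k+1}-\widetilde{Q}^{k},\widetilde{M}^{k+1}-\widetilde{M}^{k}\rangle\geq-\beta_1''\Vert\widetilde{M}^{k+1}-\widetilde{M}^{k}\Vert_F^2$ together yield the telescoping terms plus $(\tfrac{\beta_1}{2}-\beta_1'')\Vert\widetilde{M}^{k+1}-\widetilde{M}^{k}\Vert_F^2$, and symmetrically for the $Z$-block. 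You correctly identify that this falls short of the stated coefficient $\tfrac{3\beta_1}{2}-\beta_1''$ by $\beta_1\Vert\widetilde{M}^{k+1}-\widetilde{M}^{k}\Vert_F^2$. The genuine gap is in the proposed repair. Young's inequality lower-bounds an inner product by \emph{minus} a sum of squared norms; whatever parameter you tune, it contributes a negative multiple of $\Vert\widetilde{M}^{k+1}-\widetilde{M}^{k}\Vert_F^2$ and a negative multiple of $\Vert\widetilde{Q}^{k+1}-\widetilde{Q}^{k}\Vert_F^2=\beta_1^2\Vert D\widetilde{U}^{k+1}-\widetilde{M}^{k+1}\Vert_F^2$. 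It therefore cannot manufacture the missing \emph{positive} term; worse, there is no slack to absorb the second penalty against, since Proposition \ref{penultprop} supplies exactly the coefficient $\tfrac{\beta_3}{2}$ on $\Vert D\widetilde{U}^{k+1}-\widetilde{M}^{k+1}\Vert_F^2$ that the conclusion of Proposition \ref{lastprop} must retain unweakened. The same objection applies verbatim to the $\tfrac{\beta_4}{2}$ term in the $Z$-block.

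The paper closes the gap with an ingredient your plan discards: a comparison of the optimality conditions of the $M$- (resp. $Z$-) subproblem at two \emph{consecutive} iterations. Writing \eqref{eq 3.28} (the analogue of \eqref{condition optimality Mk} at step $k$, which involves $Q^{k-1}$), substituting $M=M^{k+1}$ there and $M=M^{k}$ in \eqref{condition optimality Mk}, adding, and using $Q^{k}-Q^{k-1}=\beta_1(DU^{k}-M^{k})$ gives
\[
\bigl\langle D\widetilde{U}^{k+1}-\widetilde{M}^{k},\,\widetilde{M}^{k+1}-\widetilde{M}^{k}\bigr\rangle\;\geq\;\frac{\beta_1-\beta_1''}{\beta_1}\,\bigl\Vert \widetilde{M}^{k+1}-\widetilde{M}^{k}\bigr\Vert_F^2,
\]
which is then combined with the polarization identity for $\langle \widetilde{M}^{k}-\widetilde{M}^{k+1},\widetilde{M}^{k}\rangle$. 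Your substitution eliminates all reference to the step-$k$ residual on the $D\widetilde{U}^{k+1}$ side, so this two-step comparison is unavailable to you. One caveat you should be aware of: the displayed cross-monotonicity inequality is algebraically equivalent to your hypomonotonicity bound (expand $D\widetilde{U}^{k+1}-\widetilde{M}^{k}=\tfrac{1}{\beta_1}(\widetilde{Q}^{k+1}-\widetilde{Q}^{k})+(\widetilde{M}^{k+1}-\widetilde{M}^{k})$), and with the standard sign in the identity $\langle a-b,a\rangle=\tfrac12(\Vert a\Vert_F^2-\Vert b\Vert_F^2+\Vert a-b\Vert_F^2)$ both routes deliver $\tfrac{\beta_1}{2}-\beta_1''$, not $\tfrac{3\beta_1}{2}-\beta_1''$; the weaker coefficient is still nonnegative under \eqref{eq beta''} and suffices for the telescoping argument of Appendix \ref{app C}, but it is not the inequality as stated, and your proposal as written does not bridge that difference.
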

\medskip

A proof is provided in Appendix \ref{app B}.

\medskip

The following theorem gives a condition on $\beta_1$ and $\beta_2$ so that the sequence 
generated by Algorithm \eqref{Alg 1} converges to the solution of the saddle point problem
\eqref{saddle}.

\begin{theorem}\label{thmcnv}
	Assume that $(U^*,Z^*,DU^*,Q^*,O^*)$ is a solution of a saddle point problem and let the
	parameter pair $(\lambda,a)$ satisfy \eqref{condition}. Then for any parameters such that 
	\eqref{eq beta1beta2} holds, the sequence\hfill\\ $\{(U^k,Z^k,M^k,Q^k,O^k)\}_{k=1}^{\infty}$ 
	generated by Algorithm \ref{Alg 1} satisfies
	\[
	\lim_{k\to \infty} U^{k}=U^*, \; \lim_{k\to \infty} Z^{k}=Z^*,\; 
	\lim_{k\to \infty} DU^{k}=DU^*=M^*,\, \lim_{k\to \infty} M^{k}=M^*.
	\]
\end{theorem}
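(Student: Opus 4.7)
The plan is to use Proposition~\ref{lastprop} as a one-step Lyapunov descent inequality and to deduce convergence by a telescoping argument. First, I would rearrange that inequality so that every squared norm of a dual or auxiliary-variable error appears on the left-hand side. This produces
\[
E^k - E^{k+1} \;\geq\; \mathcal{R}^{k+1},
\]
with the nonnegative Lyapunov function
\[
E^k \;=\; \frac{1}{2\beta_1}\|\widetilde{Q}^k\|_F^2 + \frac{1}{2\beta_2}\|\widetilde{O}^k\|_F^2 + \frac{\beta_1}{2}\|\widetilde{M}^k\|_F^2 + \frac{\beta_2}{2}\|\widetilde{Z}^k\|_F^2 \;\geq\; 0
\]
and $\mathcal{R}^{k+1}$ equal to the sum of the six nonnegative quantities on the right-hand side of Proposition~\ref{lastprop}. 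The coefficients $\tfrac{3\beta_1}{2}-\beta_1''$ and $\tfrac{3\beta_2}{2}-\beta_2''$ are strictly positive under \eqref{eq beta''} together with \eqref{eq beta1beta2}, and $\beta_3,\beta_4>0$ by \eqref{eq beta3beta4}, so every summand of $\mathcal{R}^{k+1}$ is nonnegative. Monotonicity and nonnegativity of $E^k$ then give at once that $E^k\to E^\infty\geq 0$ and that $\{\widetilde{Q}^k\},\{\widetilde{O}^k\},\{\widetilde{M}^k\},\{\widetilde{Z}^k\}$ are bounded.

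Second, I would telescope to obtain $\sum_{k=0}^{\infty}\mathcal{R}^{k+1} \leq E^0 - E^\infty <\infty$, so each summand of $\mathcal{R}^{k+1}$ tends to zero. In particular
\[
\|c_1 D\widetilde{U}^{k+1}-c_2\widetilde{M}^{k+1}\|_F \to 0,\qquad \|D\widetilde{U}^{k+1}-\widetilde{M}^{k+1}\|_F \to 0,
\]
\[
\|c_3 \widetilde{U}^{k+1}-c_4\widetilde{Z}^{k+1}\|_F \to 0,\qquad \|\widetilde{U}^{k+1}-\widetilde{Z}^{k+1}\|_F \to 0.
\]
Provided $c_1\neq c_2$ and $c_3\neq c_4$, each pair above is an invertible linear combination of two quantities; a one-line elimination then isolates
\[
D\widetilde{U}^{k+1}\to 0,\quad \widetilde{M}^{k+1}\to 0,\quad \widetilde{U}^{k+1}\to 0,\quad \widetilde{Z}^{k+1}\to 0.
\]
Unraveling the definitions $\widetilde{U}^k=U^k-U^*$ etc., and invoking $M^*=DU^*$ and $Z^*=U^*$ from Theorem~\ref{thm3.12}, yields the four claimed limits $U^k\to U^*$, $Z^k\to Z^*$, $M^k\to M^*$, and $DU^k\to DU^*=M^*$.

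The main obstacle is the joint feasibility of all parameter choices with strict signs: one needs $\beta_3>0$, $\beta_4>0$, $\tfrac{3\beta_1}{2}>\beta_1''$, $\tfrac{3\beta_2}{2}>\beta_2''$, together with the nondegeneracy conditions $c_1\neq c_2$ and $c_3\neq c_4$ read off from the explicit formulas~\eqref{eq c}, to hold simultaneously. The strengthened hypothesis \eqref{eq beta1beta2}---in particular the strict bounds $\rho_1>1$ and $\rho_2>3$---is exactly what permits the quadruple $(\gamma_1,\gamma_2,\gamma_3,\gamma_4)$ from Proposition~\ref{lagmul} and the auxiliary constants of Propositions~\ref{penultprop}--\ref{lastprop} to be chosen so that every one of these strict inequalities is satisfied. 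Once this parameter bookkeeping is settled, the Lyapunov descent followed by linear separation is routine.
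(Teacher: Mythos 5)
Your proposal is correct and follows essentially the same route as the paper's Appendix~C: the Lyapunov quantity $E^k$ is exactly the paper's $x_k$, the monotone-decrease/telescoping step is identical, and your ``invertible linear combination'' elimination using $c_1\neq c_2$, $c_3\neq c_4$ is the same idea the paper implements via the inequalities involving $c_1^2-c_2^2$ and $c_3^2-c_4^2$. The parameter bookkeeping you flag as the main obstacle is precisely what the paper resolves in Appendices~A and~B, so nothing essential is missing.
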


\medskip

See Appendix \ref{app C} for a proof.

Theorem~\ref{thmcnv} implies that the iterates $(U^k,Z^k,M^k,Q^k,O^k)$ generated by 
Algorithm \ref{Alg 1} converge to a stationary solution of \eqref{main5} when $k$ 
increases. In other words, the proposed method is guaranteed to converge under the stated 
conditions. The following section illustrates the effectiveness of this algorithm when
applied to various image completion and segmentation tasks. 

\medskip
\section{Numerical examples}\label{sec 4}
We compare the LR-CNC algorithm (Algorithm~\ref{Alg 1}) to several available methods for 
image completion and segmentation, including the Convex-Non-Convex (CNC) segmentation 
method by Chan et al. \cite{CNC}, the ATCG-TV image restoration algorithm proposed by 
Benchettou et al. \cite{Benchatou} (a total variation-based image reconstruction method 
using an alternating conditional gradient scheme), and the classical Chan-Vese 
segmentation method \cite{T.Chan}. We also apply a standard K-means clustering \cite{Iko}
to determine segmentations of reconstructed images. 

The iterations with Algorithm~\ref{Alg 1} are terminated as soon as two
consecutive approximations $U^k$ are sufficiently close. Specifically, we terminate the
iterations when
\begin{equation}\label{reldiff}
	\dfrac{\left\Vert U^{k+1}-U^{k} \right\Vert_F}{\left\Vert U^{k}\right\Vert_F}\leq 
	\text{tol},
\end{equation}
where $\text{tol}$ is a user-chosen tolerance. In all the experiments, we set 
$\text{tol}=10^{-4}$. We report the Peak Signal-to-Noise Ratio (PSNR) 
\[
\text{PSNR}=10\log_{10} \dfrac{\max U(:)^2}{\left\Vert U-U_{reco}\right\Vert_F^2},
\]
where $U$ and $U_{reco}$ denote arrays that represent the original and the recovered data,
respectively, and $\max U(:)^2$ is the maximum squared entry (pixel-value) of the array 
$U$.

When segmenting images, we also compute the Structural Similarity Index Measure (SSIM) to 
determine the closeness of the recovered and the original image. The definition of this 
index is somewhat involved, see \cite{SSIM}, and we do not provide it here. We just recall
that a larger SSIM-value corresponds to a more accurate reconstruction and the maximum 
value is 1. All computations were carried out on a laptop computer equipped with a 2.3 GHz
Intel Core i5 processor and 8 GB of memory using MATLAB 2023.

For the numerical experiments, we need to fix several parameters: $a$, $\lambda$, $T$,
$\beta_1$, and $\beta_2$. Based on the equations \eqref{condition}, we let 
$\lambda=\tau_1 9a$, and \eqref{eq beta1beta2} shows that
\[
\beta_1=\tau_2\max\left\{ \dfrac{a}{\rho_1-1}, \dfrac{2a\rho_1}{(\rho_1-1)^2} \right\},
\quad \beta_2=\tau_3 \min\left\{ \dfrac{\rho_2(\lambda-8a)}{(\rho_2-1)},
\dfrac{2\rho_2(\lambda-8a)}{(\rho_2-1)^2}\right\}
\]
with $\rho_1>1$ and $\rho_2>3$. Thus, we only need to set the parameters $a$, $\tau_1$,
$\tau_2$, $\tau_3$, $\rho_1$, $\rho_2$, and $T$.

\subsection{Image completion}
This subsection compares the performance of Algorithm \ref{Alg 1} to that of the recently
proposed algorithms ATCG-TV by Benchettou et al. \cite{Benchatou} and CNC by Chan et al. 
\cite{CNC}. This subsection is divided into two parts: the first part is concerned with 
the gray scale images, while the second part considers color images.

In this subsection we use the sampling rate, which is defined as 
\[
\text{SR}=\dfrac{n_1n_2-p}{n_1n_2},
\]
where $n_1n_2$ is the number of the pixels of the data, and $p$ is the number of missed 
entries. For all the experiments in this subsection, we set $a=0.1$, $T=10^{-6}$, 
$\rho_1=2.5$, $\rho_2=3.001$, $\tau_1=\rho_1$, $\tau_2=\rho_1$, and $\tau_3=1.0001$.

\subsubsection{Gray scale images}
We illustrate the performance of Algorithm \ref{Alg 1} when applied to the well-known MRI
and cameraman images. Figure \ref{fig 1} shows results of completion for the MRI image 
determined by Algorithm \ref{Alg 1}, the CNC algorithm \cite{CNC}, and the ATCG-TV 
algorithm \cite{Benchatou} for $90\%$ missing data (SR$=0.1$). Table \ref{tab 1} reports 
PSNR-values, CPU-times (in seconds), and the number of iterations (Iter) required by each 
algorithm to satisfy the stopping criteria. The stopping criteria for the ATCG-TV and CNC
algorithms were the default choices provided in \cite{Benchatou} and \cite{CNC}, 
respectively.

\begin{figure}[H]
\centering
\begin{tabular}{ccccc}
\includegraphics[width=0.15\linewidth]{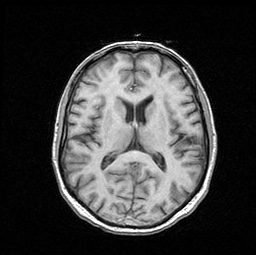}&
\includegraphics[width=0.15\linewidth]{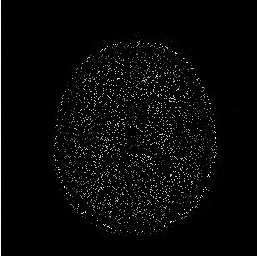}&
\includegraphics[width=0.15\linewidth]{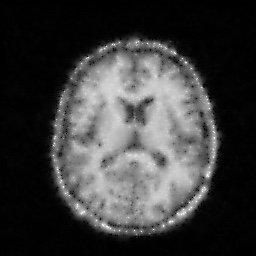}&
\includegraphics[width=0.15\linewidth]{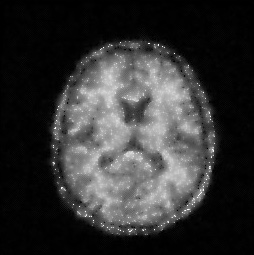}&
\includegraphics[width=0.15\linewidth]{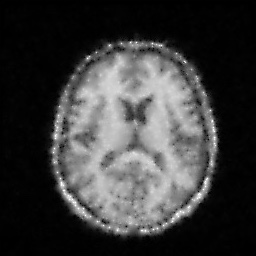}\\
\textbf{Original} &\textbf{Observed}  &\textbf{CNC} &\textbf{ATCG-TV} &\textbf{LR-CNC}
\end{tabular}
\caption{Recovered MRI images by the CNC, ATCG-TV, and LR-CNC algorithms for SR$=0.1$.}
\label{fig 1}
\end{figure}

\begin{table}[h]
\centering
\resizebox{0.9\textwidth}{!}{
\begin{tabular}{@{}l|l|l|l|l|l|l|l|l|l|l@{}}
\hline Data & Method & \multicolumn{3}{|c|}{CNC} &\multicolumn{3}{|c|}{ATCG-TV} & 
\multicolumn{3}{|c}{LR-CNC} \\
\hline & SR & PSNR & CPU-time & Iter & PSNR & CPU-time & Iter & PSNR & CPU-time & Iter  \\
\hline \multirow{3}{*}{MRI} & 0.1 & 21.22 & 20.22& 730&18.44&8.20&968 & \bf{21.36} & 
\bf{4.54} & \bf{174}  \\
 & 0.2 & 23.14& 5.68 &355 &21.96&7.13&782& \bf{23.37}& \bf{2.69} & \bf{98}  \\
 & 0.3 & 25.01&4.00 &262 &24.37&8.56&889 & \bf{25.32}  & \bf{1.65} & \bf{60}  \\
\hline \multirow{3}{*}{Cameraman} & 0.1 &  21.34 & 11.53 &  757 & 19.31&6.58&770 & 
\bf{21.52} & \bf{5.09} & \bf{184}  \\ 
 & 0.2 & 23.11 & 5.52& 361 & 22.11&7.31&785 & \bf{23.27}& \bf{2.64} & \bf{91} \\ 
 & 0.3 & 24.33 & 3.35 &  220 & 24.08&6.90&755 & \bf{24.70}  & \bf{1.66} & \bf{58}\\
\hline
\end{tabular}}
\caption{PSNR-values of the restored images, CPU-times, and number of iterations by the 
CNC, ATCG-TV, LR-CNC methods for various values of SR.} \label{tab 1} 
\end{table}

Figure \ref{fig 1} and Table \ref{tab 1} show that for small SR-values, the LR-CNC 
algorithm outperforms the ATCG-TV algorithm. However, for larger SR-values, the 
difference in performance between the two algorithms is less significant. The LR-CNC 
algorithm demonstrates a clear advantage in terms of CPU time and the number of iterations
required in comparison with both the CNC and ATCG-TV algorithms. Figure \ref{fig 2} 
displays the graphs of the logarithm of the mean square error versus the number of 
iterations for the LR-CNC, CNC, and ATCG-TV algorithms, as well as the values of 
\begin{equation}\label{logdiff}
\ln\left(\dfrac{\left\Vert U^{k+1}-U^k \right\Vert_F}{\left\Vert U^k\right\Vert_F}\right)
\end{equation}
and the evolution of the PSNR-values as a function of the number of iterations when these 
algorithms are applied to the MRI image for SR$=0.1$.

\begin{figure}[H]
	\centering
	\begin{tabular}{cc}
		\includegraphics[width=0.4\linewidth]{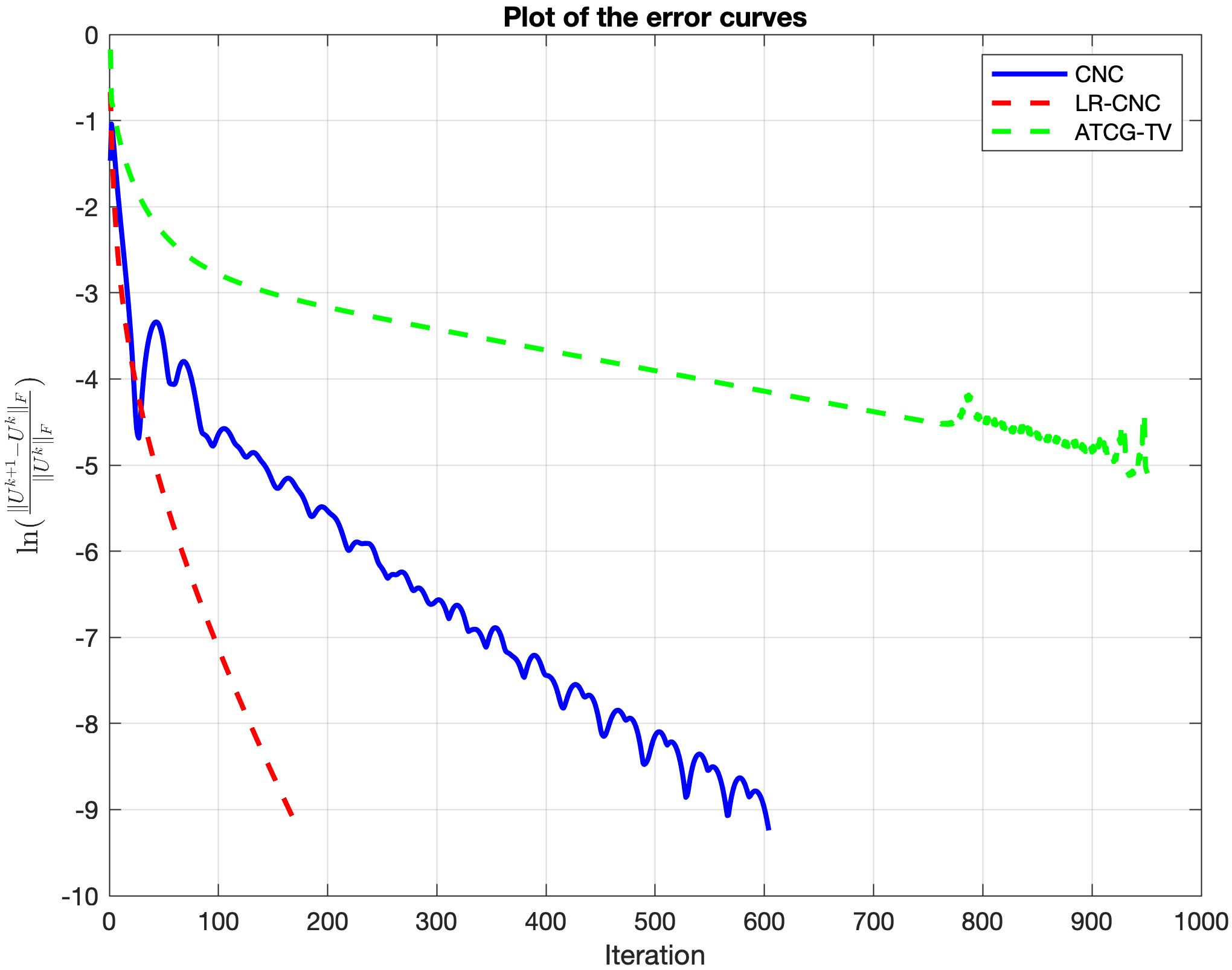}&  
		\includegraphics[width=0.4\linewidth]{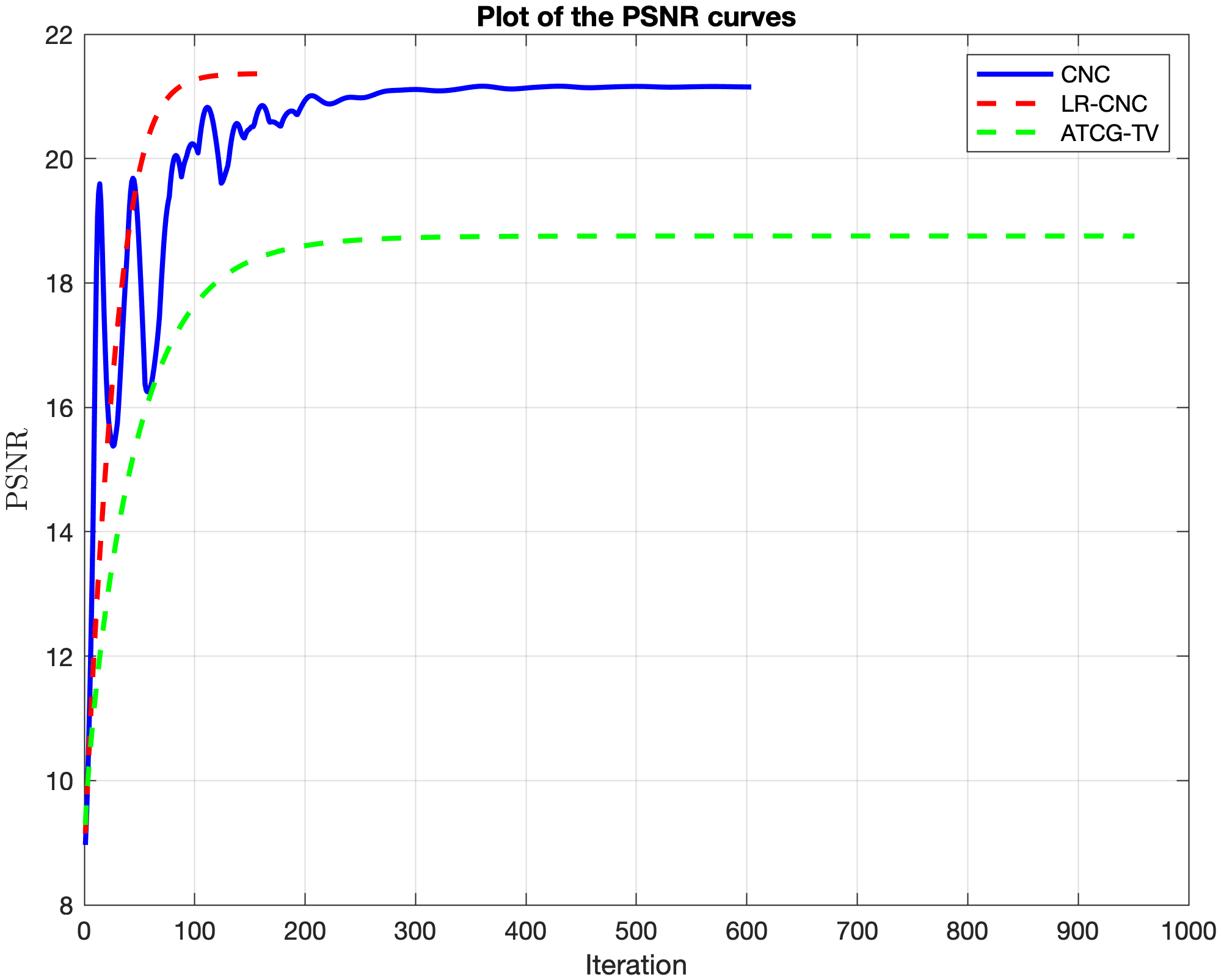}
	\end{tabular}
\caption{Logarithm of the differences \eqref{logdiff} and the PSNR-values as a function of
the number of iterations for the CNC, ATCG-TV, and LR-CNC algorithms applied to the MRI 
image with SR$=0.1$.}\label{fig 2}
\end{figure}

Figure \ref{fig 2} shows the relative differences \eqref{reldiff} obtained with the 
LR-CNC algorithm to decrease faster than the corresponding differences for the CNC and 
ATCG-TV algorithms. Moreover, the PSNR-values of the restorations determined by the LR-CNC
algorithm increase quickly and smoothly with the iteration number. In fact, they increase
much faster and in a smoother manner with the iteration number than for the CNC and ATCG-TV 
algorithms. 

\subsubsection{Color images}
We apply the LR-CNC, CNC, and ATCG-TV algorithms to the restoration of color images. In
our experiments, we use the well-known Airplane and Barbara images, both of which are 
represented by arrays of size $256 \times 256 \times 3$. Thus, the red, green, and blue 
channels are each represented by a tensor slice. These images are reshaped into matrices 
of size $256 \times (256*3)$ by using the MATLAB command
$\texttt{reshape}\left(\cdot,[256,256*3]\right)$.

Figure \ref{fig 3} displays results for the color images Airplane and Barbara with $90\%$ 
missing data when reconstructed by the CNC, ATCG-TV, and LR-CNC algorithms.

\begin{figure}[H]
\centering
\begin{tabular}{ccccc}
\includegraphics[width=0.15\linewidth]{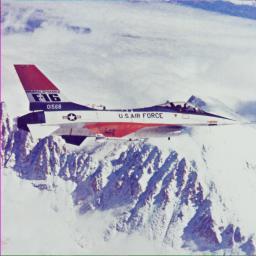}&
\includegraphics[width=0.15\linewidth]{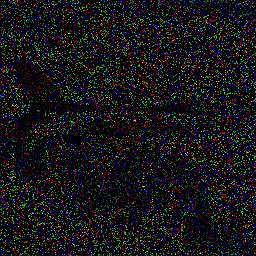}&
\includegraphics[width=0.15\linewidth]{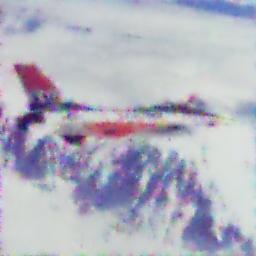}&
\includegraphics[width=0.15\linewidth]{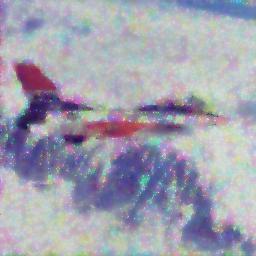}&
\includegraphics[width=0.15\linewidth]{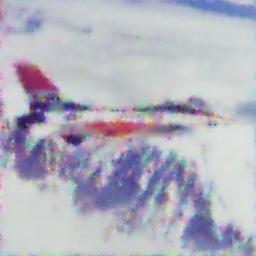}\\
\includegraphics[width=0.15\linewidth]{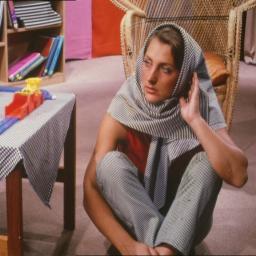}&
\includegraphics[width=0.15\linewidth]{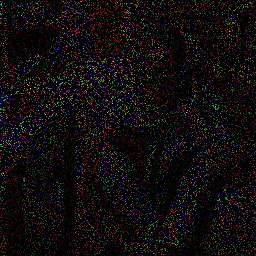}&
\includegraphics[width=0.15\linewidth]{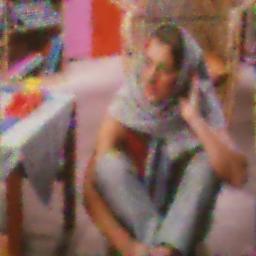}&
\includegraphics[width=0.15\linewidth]{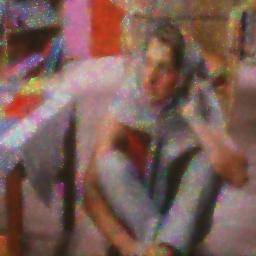}&
\includegraphics[width=0.15\linewidth]{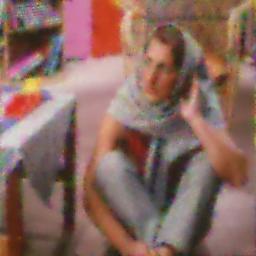}\\
\textbf{Original} &\textbf{Observed}  &\textbf{CNC} &\textbf{ATCG-TV} &\textbf{LR-CNC}
\end{tabular}
\caption{Recovered Airplane and Barbara images determined by the CNC, ATCG-TV, and LR-CNC
algorithms with SR$=0.1$. The observed images are dark because they consist of only $10\%$
of the pixel values of the original images.}\label{fig 3}
\end{figure}

Table \ref{tab 2} displays the PSNR-values, CPU-time, and the number of iterations 
required by the CNC, ATCG-TV, and LR-CNC algorithms to satisfy the stopping criterion.
The computations are carried out for the Airplane and Barbara images for several 
SR-values. Figure \ref{fig 4} depicts the relative differences \eqref{logdiff} and the
PSNR-values as functions of the iteration number for the CNC, ATCG-TV, and LR-CNC 
algorithms.

\begin{table}[h]
\centering
\begin{tabular}{@{}l|l|l|l|l|l|l|l|l|l|l@{}}
\hline Data & Method & \multicolumn{3}{|c|}{CNC} &\multicolumn{3}{|c|}{ATCG-TV} & 
\multicolumn{3}{|c}{LR-CNC} \\
\hline & SR & PSNR & CPU-time & Iter & PSNR & CPU-time & Iter & PSNR & CPU-time & Iter  \\
\hline \multirow{3}{*}{Airplane} & 0.1 & 22.26 &56.64& 714 & 18.64 &19.24 &736 & 
\bf{22.47} &\bf{4.14} &\bf{78} \\
 & 0.2 & 24.39&13.48 &355 & 23.04&20.90 &736  &\bf{24.53}&\bf{2.11}&\bf{38} \\
& 0.3 & 25.86 & 9.56 &212 & 25.21 & 22.42 &753 & \bf{26.13}&\bf{2.05}&\bf{37} \\
\hline \multirow{3}{*}{Barbara} & 0.1 & 23.11 & 24.30 &722 & 20.73 &20.78 &849 & 
\bf{23.24}&\bf{3.63}&\bf{71} \\
 & 0.2 & 25.19 & 13.14 & 346 & 24.42 & 24.59 &814 &	\bf{25.37}&\bf{2.39}&\bf{41} \\
 & 0.3 & 26.76 & 8.47 & 217 & 26.63 &22.32 &769   &	\bf{26.89}&\bf{2.00}&\bf{36} \\
\hline
\end{tabular}
\caption{PSNR-values, CPU-time, and the number of iterations required by the CNC, ATCG-TV,
and LR-CNC algorithms for several SR-values.}\label{tab 2}
\end{table}

\begin{figure}[H]
\centering
\begin{tabular}{cc}
\includegraphics[width=0.4\linewidth]{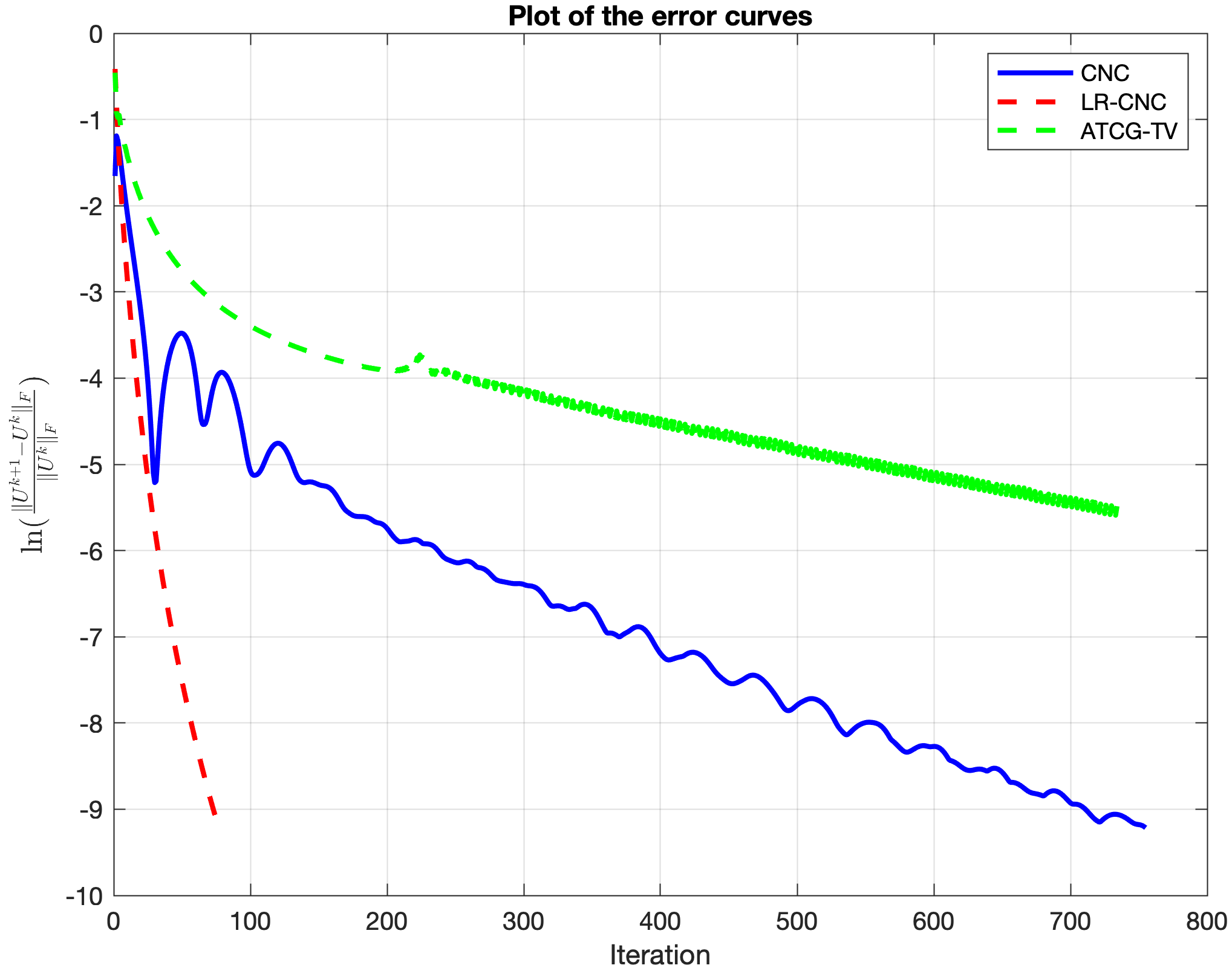}&  
\includegraphics[width=0.4\linewidth]{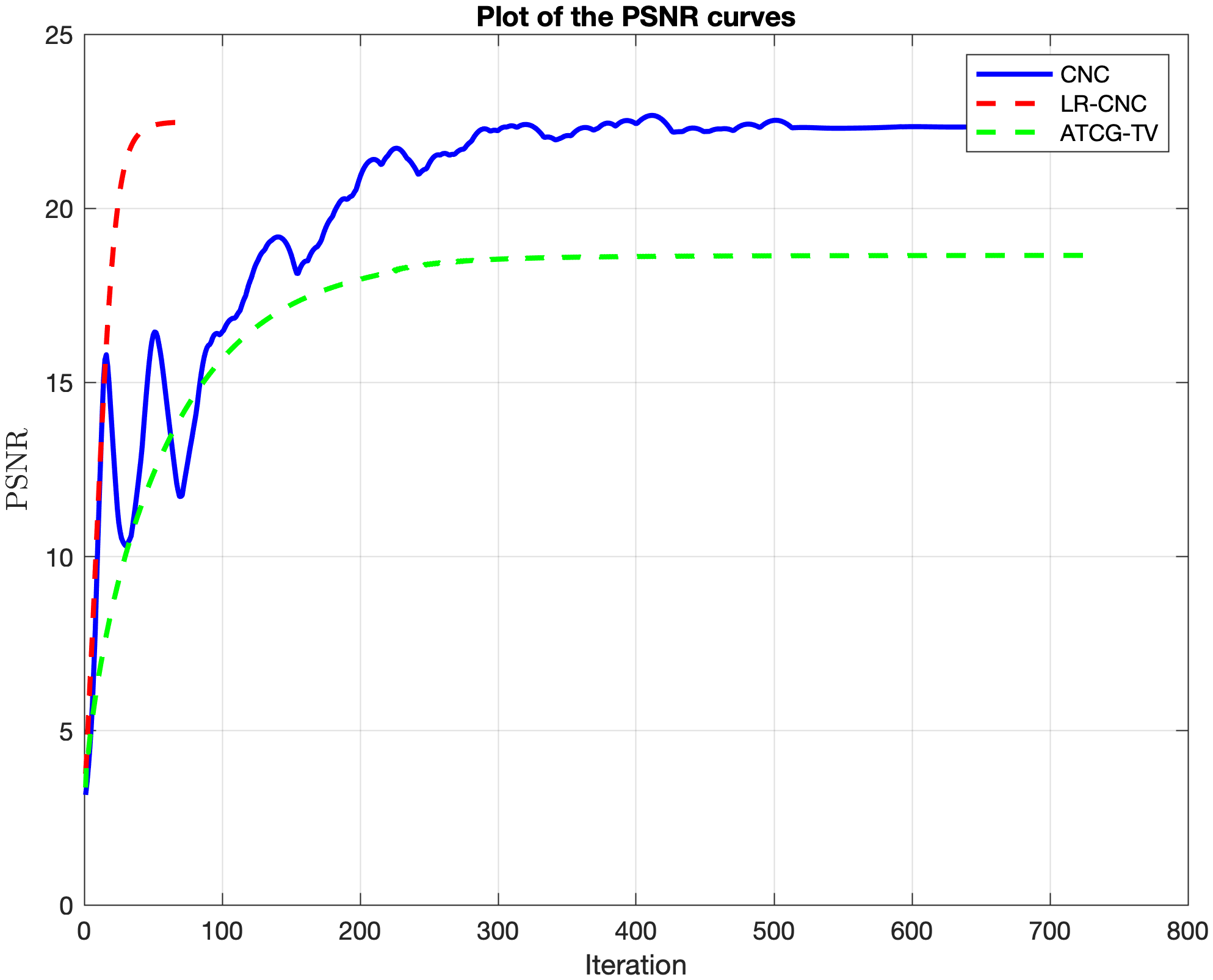}
\end{tabular}
\caption{The relative differences \eqref{logdiff} and PSNR-values as functions of the 
number of iterations for the CNC, ATCG-TV, and LR-CNC algorithms for SR$=0.1$.}
\label{fig 4}
\end{figure}

The above results show the LR-CNC algorithm to consistently produce images with higher 
PSNR/SSIM values than the other algorithms in our comparison, especially in challenging 
scenarios (very low sampling ratio or high noise levels). Qualitatively, the LR-CNC 
algorithm gives reconstructions that retain details and segments images into regions 
more distinctly than the other algorithms; see Figures \ref{fig 3} and \ref{fig 4}. An 
advantage of the LR-CNC algorithm is its fast convergence: the algorithm achieves accurate
results in far fewer iterations (\ref{tab 2}) than the other algorithms due to the 
efficiency of low-rank regularization and our ADMM scheme.

\subsection{Image segmentation}
This subsection applies the LR-CNC algorithm (Algorithm \ref{Alg 1}) to image 
segmentation. We compare the performance of this algorithm to the CNC and ATCG-TV 
algorithms, as well as to an algorithm proposed by Chan and Vese \cite{T.Chan}. In all
experiments, we apply these algorithms to images that have been contaminated by Gaussian 
noise and then use the K-means algorithm \cite{Iko} to segment the resulting image, with 
$K$ equal to the number of regions in the ground truth. Given a noise-free image 'Im', we 
generate a noise-contaminated image $B$ with the MATLAB command 
\[
B = \texttt{imnoise}(\text{Im}, '\text{gaussian}', L, 0.01); 
\]
where the parameter $L$ specifies the mean of the Gaussian noise; $0.01$ is the variance 
of the noise. The variance is the default value. We will refer to the value of the 
parameter $L$ as the Gaussian noise level. For all segmentation experiments, we let 
$a\in\{0.1,0.2\}$. If $L$ is relatively large, then we set $a=0.2$, otherwise we set 
$a=0.1$. Moreover, we let $T=10^{-6}$, $\rho_1=2.5$, $\rho_2=10.001$, $\tau_1=\rho_1$, 
$\tau_2=\rho_1$, and $\tau_3=1.0001$. Code for the algorithm by Chan \& Vese \cite{T.Chan}
is available at \footnote{\url{https://fr.mathworks.com/matlabcentral/fileexchange/34548-active-contour-without-edge}}.

Figure \ref{fig 5} displays results obtained with the CNC, ATCG-TV, Chan \& Vese, and 
LR-CNC algorithms for Gaussian noise level $L=0.3$. These tests are applied to the 
MRI image with the cluster number set to $K=3$, and to a Millennium simulation 
\footnote{\url{https://wwwmpa.mpa-garching.mpg.de/galform/virgo/millennium/}} with cluster
number $K=2$.

\begin{figure}[H]
\centering
\begin{tabular}{cccccc}
\includegraphics[width=0.12\linewidth]{completion_results/original_seg.jpg}&
\includegraphics[width=0.12\linewidth]{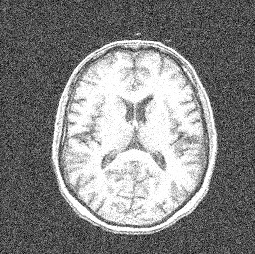}&
\includegraphics[width=0.12\linewidth]{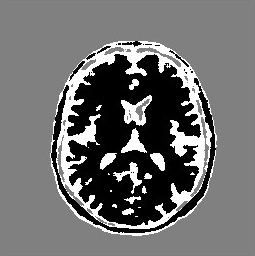}&
\includegraphics[width=0.12\linewidth]{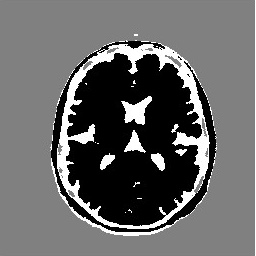}&
\includegraphics[width=0.12\linewidth]{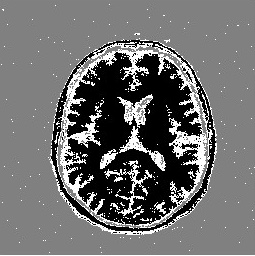}&
\includegraphics[width=0.12\linewidth]{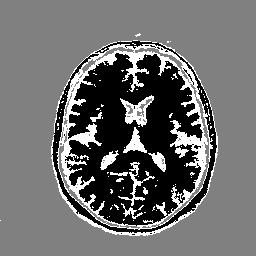}\\
\includegraphics[width=0.12\linewidth]{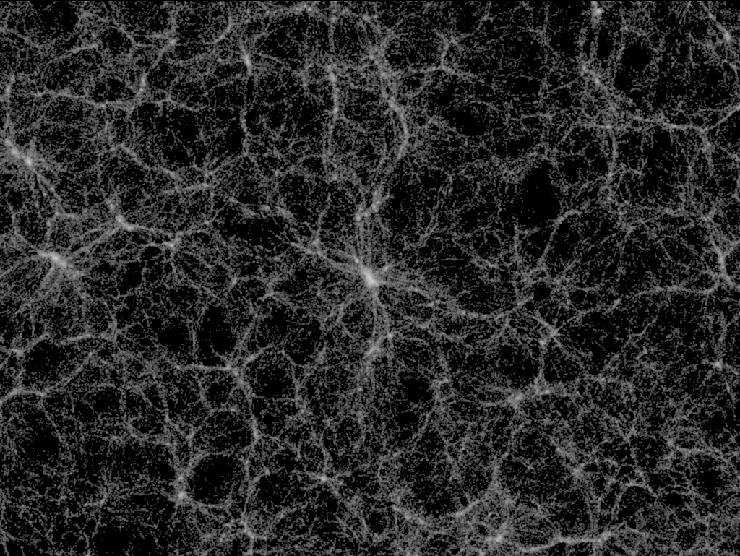}&
\includegraphics[width=0.12\linewidth]{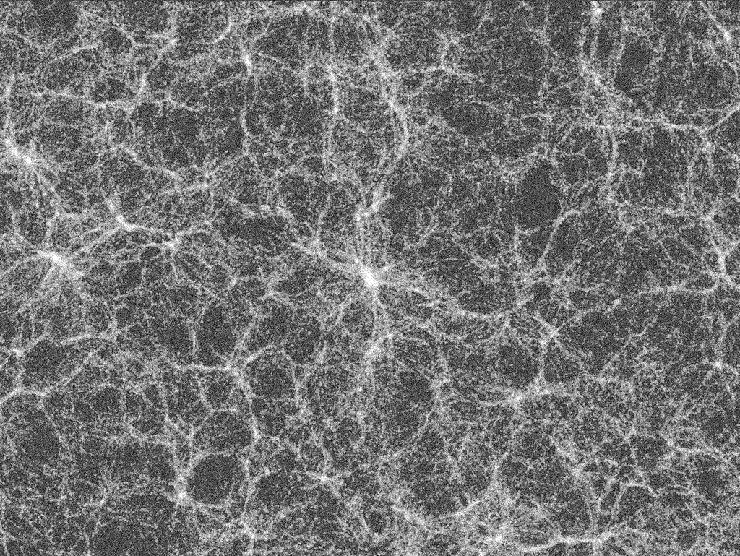}&
\includegraphics[width=0.12\linewidth]{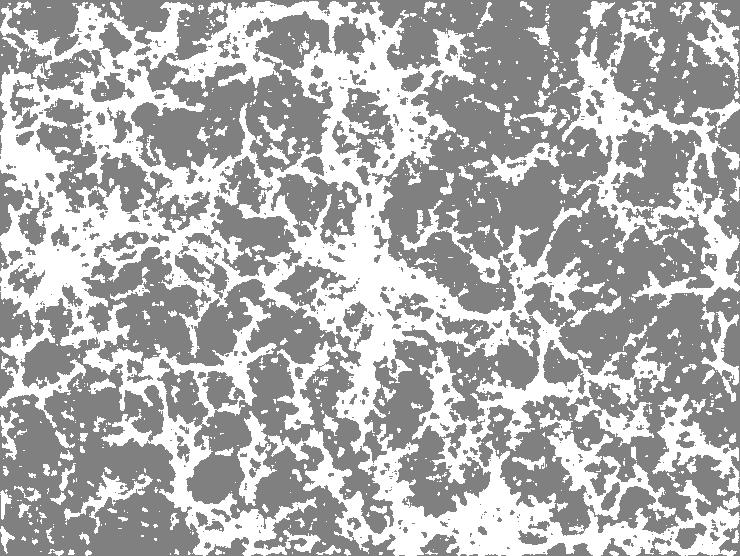}&
\includegraphics[width=0.12\linewidth]{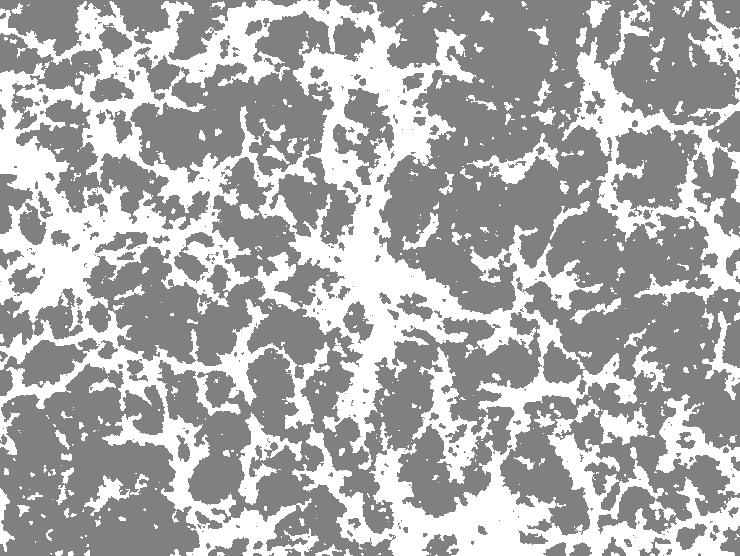}&
\includegraphics[width=0.12\linewidth]{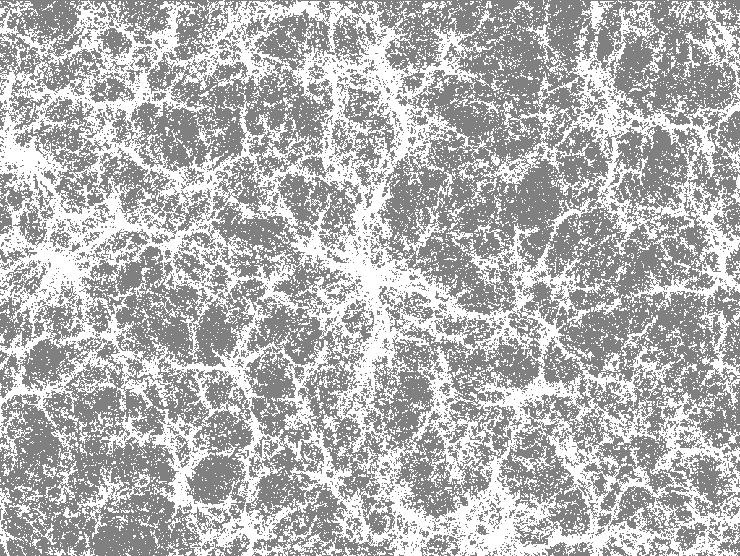}&
\includegraphics[width=0.12\linewidth]{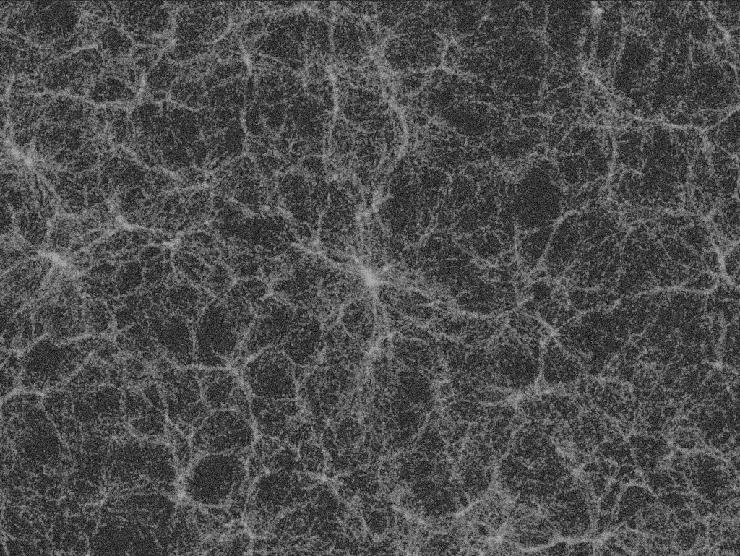}\\
\textbf{Original} &\textbf{Observed} & \textbf{Chan $\&$ Vese} &  \textbf{ATCG-TV} &\textbf{CNC} &\textbf{LR-CNC}
\end{tabular}
\caption{Segmented MRI and Millenium images for Gaussian noise with $L=0.3$, and for $K=3$
and $K=2$, respectively.}\label{fig 5}
\end{figure}

Table \ref{tab 3} shows PSNR and SSIM values for segmented images determined by the CNC, 
ATCG-TV, Chan \& Vese, and LR-CNC algorithms for several Gaussian noise levels $L$ when 
applied to the MRI and Millennium images. Figures \ref{fig 6} and \ref{fig 7} display the
relative differences \eqref{logdiff} and PSNR-values of images determined at each 
iteration of the CNC, ATCG-TV, and LR-CNC algorithms as functions of the iteration number 
when these algorithms are applied to the MRI and Millennium images. The Gaussian noise 
level is $L=0.3$ and we seek to determine $K = 3$ and $K = 2$ clusters in the MRI and 
Millenium images, respectively.

\begin{table}[h]
\centering
\begin{tabular}{@{}l|l|l|l|l|l|l|l|l|l@{}}
\hline method  & & \multicolumn{2}{c|}{CNC} & \multicolumn{2}{|c|}{ATCG-TV} & \multicolumn{2}{|c}{Chan $\&$ Vese \cite{T.Chan}} & \multicolumn{2}{|c}{LR-CNC}  \\
\hline & $L$ & PSNR & SSIM & PSNR & SSIM & PSNR & SSIM & PSNR & SSIM \\
\hline \multirow{4}{*}{MRI}& 0.01 & 24.94 & 0.42 & 24.37 & 0.49&24.17 & 0.64 & \bf{25.12} & \bf{0.49}\\
 &  0.1 & 18.73 & 0.33 &18.11 & 0.37 &18.71 & 0.37 & \bf{20.50} & \bf{0.38}  \\
  & 0.2 & 13.78 & 0.29 &13.80 & 0.31 & 13.71 & 0.32 & \bf{15.43} & \bf{0.35} \\
 & 0.3 &10.62 & 0.27 & 10.63 & 0.26 & 10.54 & 0.27 & \bf{12.03} & \bf{0.31} \\
\hline \multirow{4}{*}{Millennium } & 0.01  & 22.74 & 0.71 & 22.40 & 0.69 & 18.94 & 0.37 & 
\bf{22.40} & \bf{0.69} \\
 & 0.1& 18.14 & 0.62 & 16.20 & 0.30 & 16.66 & 0.30 & \bf{20.30} & \bf{0.65} \\
  & 0.2 & 13.34 & 0.49 & 12.21 & 0.26 &  13.09 & 0.24 & \bf{18.29} & \bf{0.59}\\
&  0.3&10.10 & 0.40 & 9.15 & 0.22 &10.11 & 0.20 & \bf{13.60} & \bf{0.51}\\
\hline
\end{tabular}
\caption{PSNR- and SSIM-values for images determined by the CNC, ATCG-TV, Chan \& Vese, 
and LR-CNC algorithms for Gaussian noise with several noise levels $L$.}\label{tab 3}
\end{table}

\begin{figure}[H]
\centering
\begin{tabular}{cc}
\includegraphics[width=0.4\linewidth]{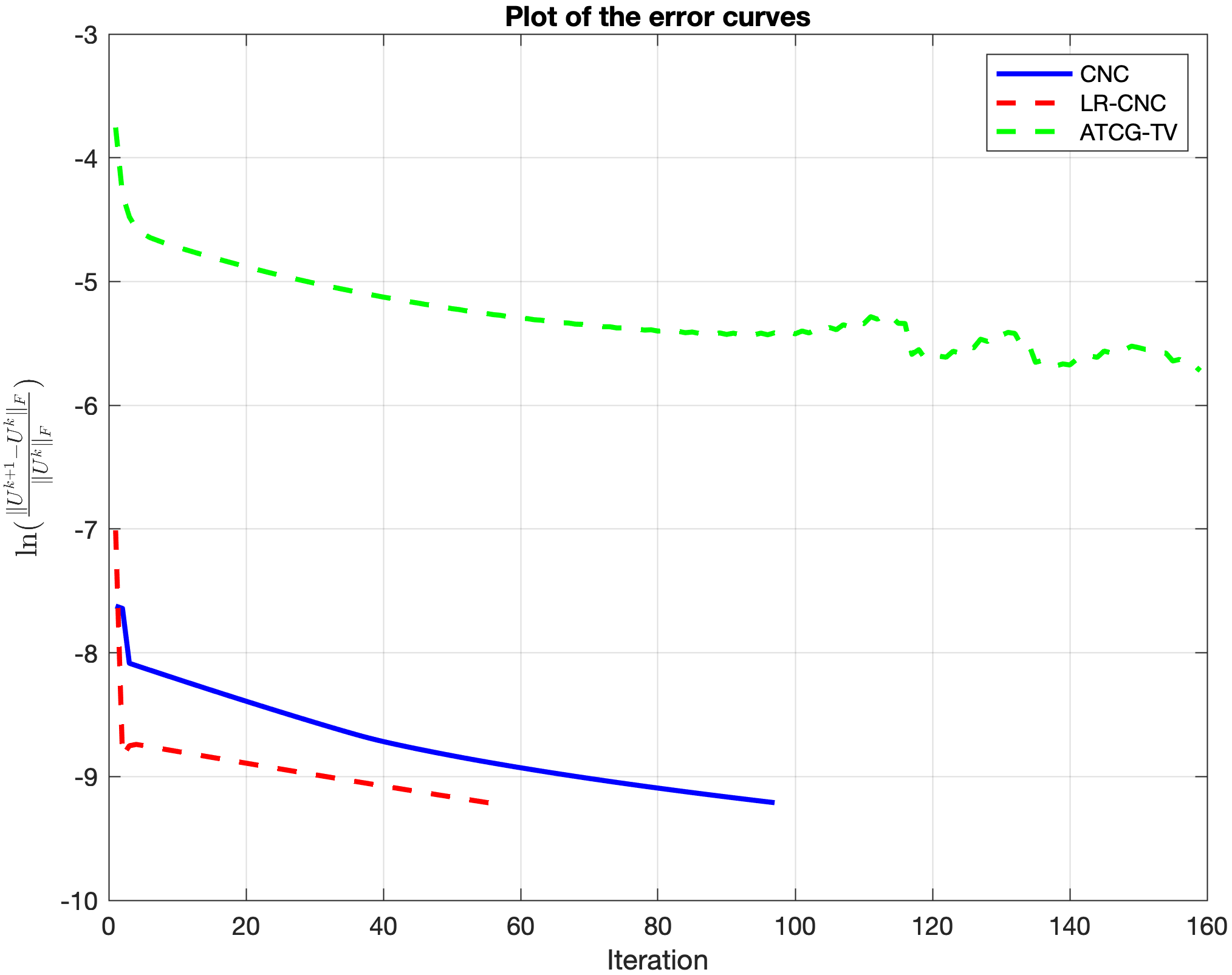}   & 
\includegraphics[width=0.4\linewidth]{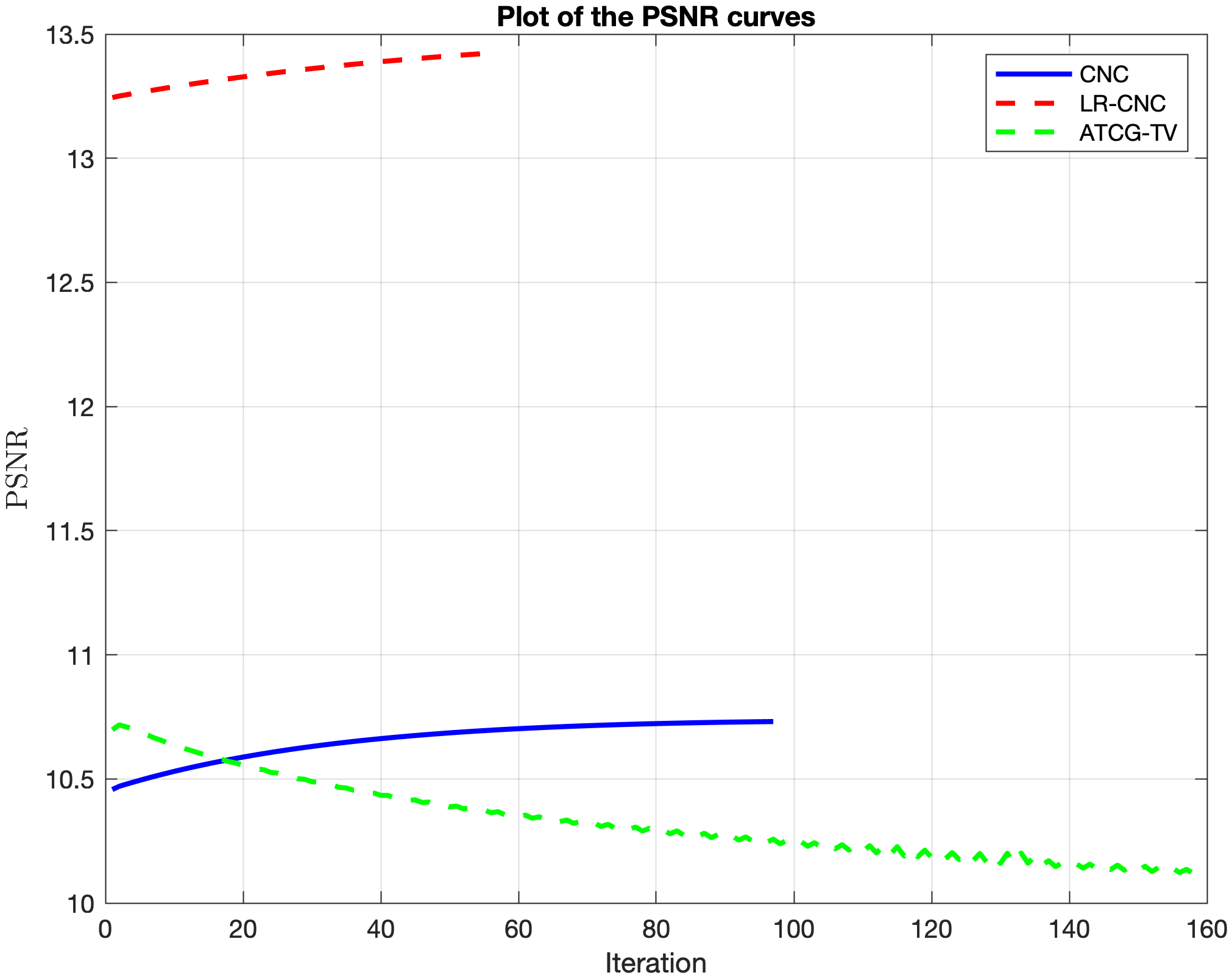} 
\end{tabular}
\caption{The relative differences \eqref{logdiff} and PSNR-values as functions of the
iteration number obtained with CNC, ATCG-TV, and LR-CNC algorithms applied to the MRI 
image with \textcolor{blue}{Gaussian noise level} $L=0.3$ and $K=3$.}\label{fig 6}
\end{figure}

\begin{figure}[H]
\centering
\begin{tabular}{cc}
\includegraphics[width=0.4\linewidth]{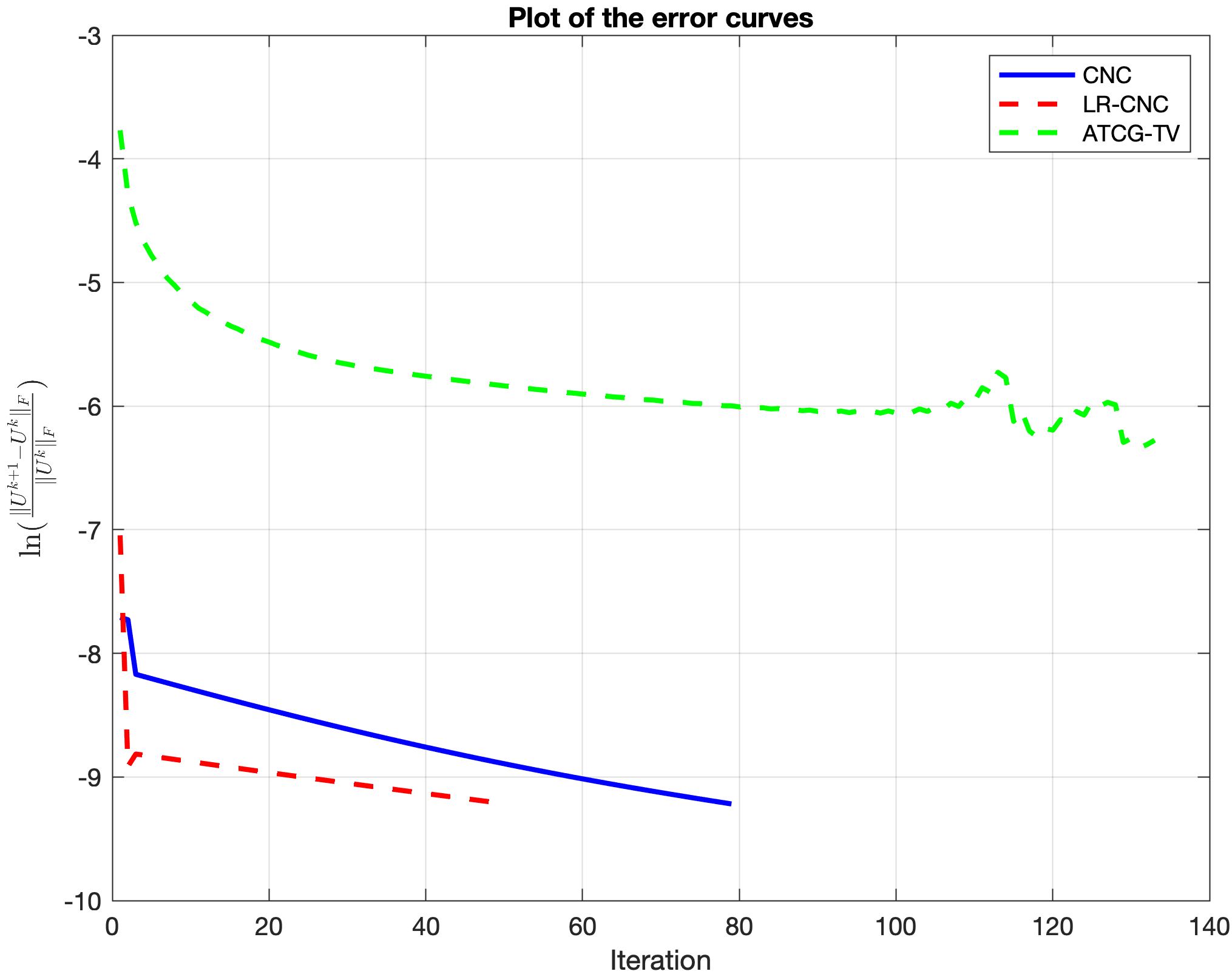}   & 
\includegraphics[width=0.4\linewidth]{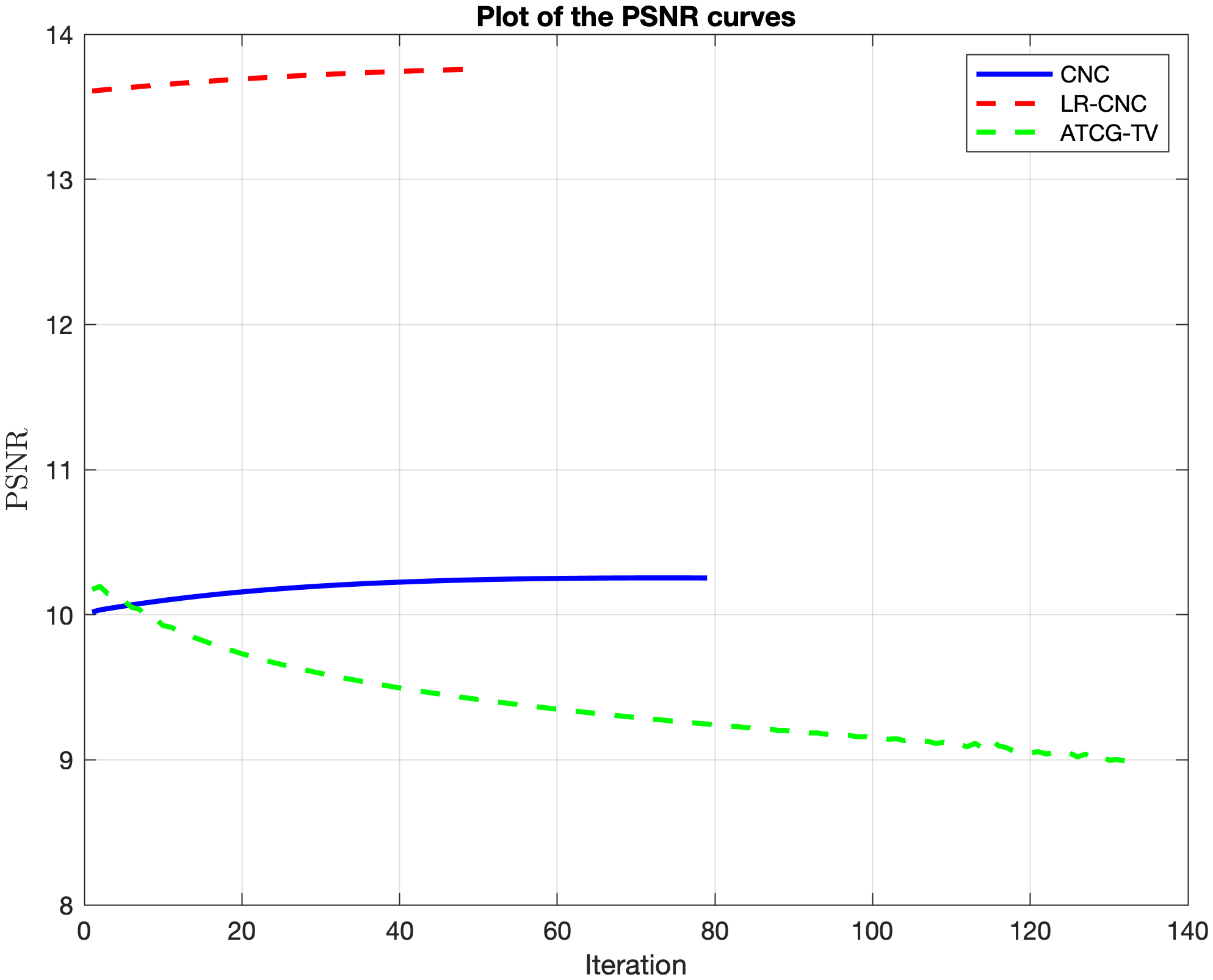} 
\end{tabular}
\caption{The relative differences \eqref{logdiff} and PSNR-values as functions of the
iteration number obtained with CNC, ATCG-TV, and LR-CNC algorithms applied to the 
Millenium image with Gaussian noise level $L=0.3$ and $K=2$.}\label{fig 7}
\end{figure}

Figure \ref{fig 5} shows the segmented images produced by the LR-CNC algorithm to retain 
details with high precision for both images, also when the amount of noise is large. 
Furthermore, the PSNR and SSIM values reported in Table \ref{tab 3} indicate that the
LR-CNC algorithm consistently exhibits greater robustness than the other algorithm, with 
particularly strong performance for larger noise levels. Figures \ref{fig 6} and 
\ref{fig 7} illustrate the smooth and rapid convergence of the iterates determined by the
LR-CNC algorithm.

Our last example is concerned with segmentation of hyperspectral images. Hyperspectral 
images contain more detailed information than RGB images. However, material separation 
based on hyperspectral images remains a challenging task. It involves isolating materials 
from one another. Various approaches have been developed to carry out material 
separation, including non-negative matrix factorization \cite{elha_NFT}.

We consider two hyperspectral images: Samson and Jasper Ridge. Both images are captured by 
the Airborne Visible/Infrared Imaging Spectrometer AVIRIS. Initially, these images 
contained 224 spectral bands. After removing the water absorption bands, the Samson image 
has 156 bands and the Jasper Ridge image has 198 bands. The Samson dataset is a tensor of 
size $95 \times 95 \times 156$, and the Jasper Ridge dataset is a tensor of size 
$100 \times 100 \times 198$. The Samson dataset contains three materials: water, soil, and 
trees, whereas the Jasper Ridge dataset includes four materials: water, soil, trees, and 
roads. Figures \ref{fig 8} and \ref{fig 9} show the different materials of each dataset.
By using the non-negative tensor factorization described in \cite{elha_NFT}, it can be 
shown that the abundance of each material is presented by the color between red and 
yellow.

\begin{figure}[H]
\centering
\begin{tabular}{cccc}
\includegraphics[width=0.2\linewidth]{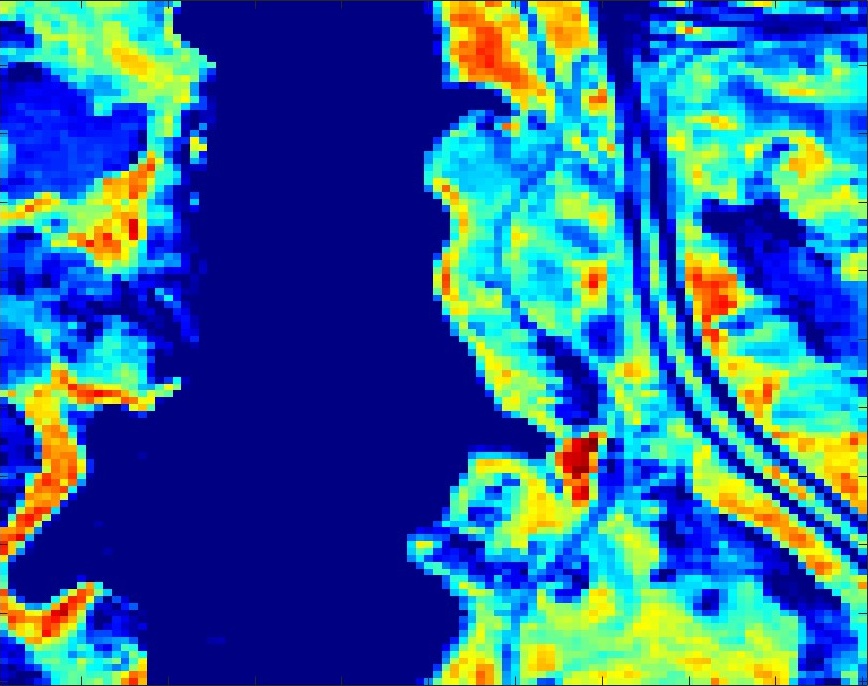}&
\includegraphics[width=0.2\linewidth]{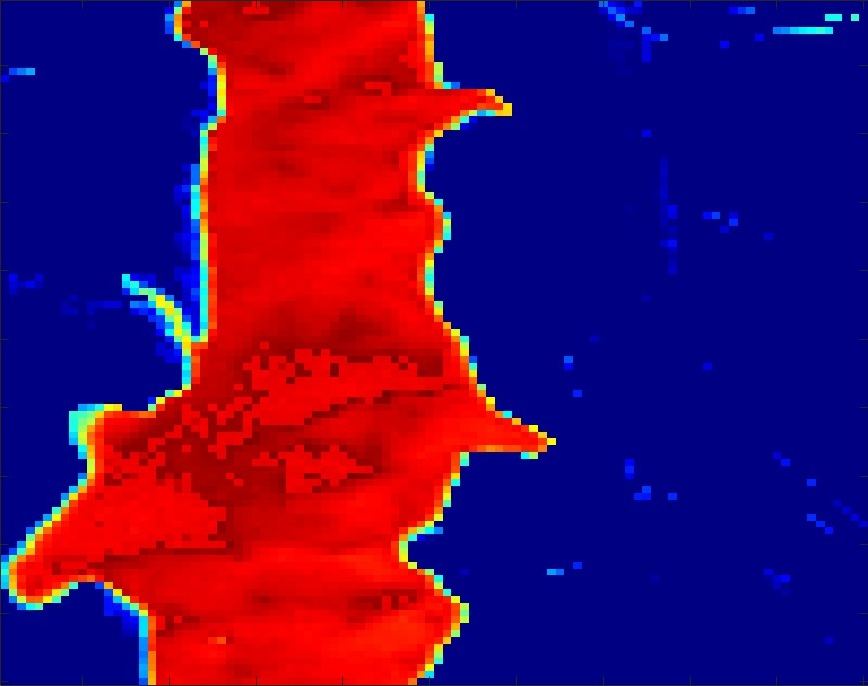}&
\includegraphics[width=0.2\linewidth]{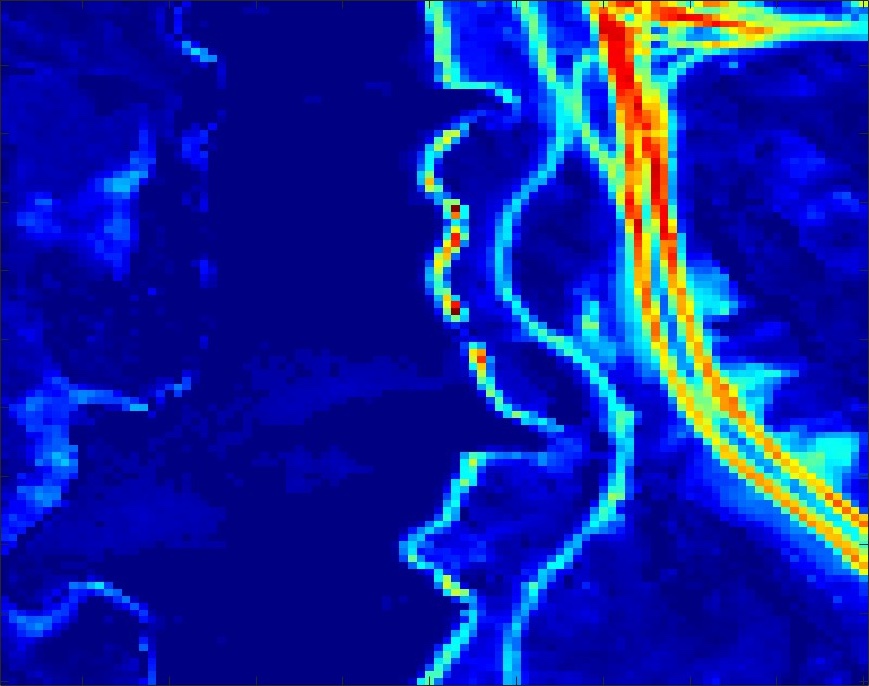}&
\includegraphics[width=0.2\linewidth]{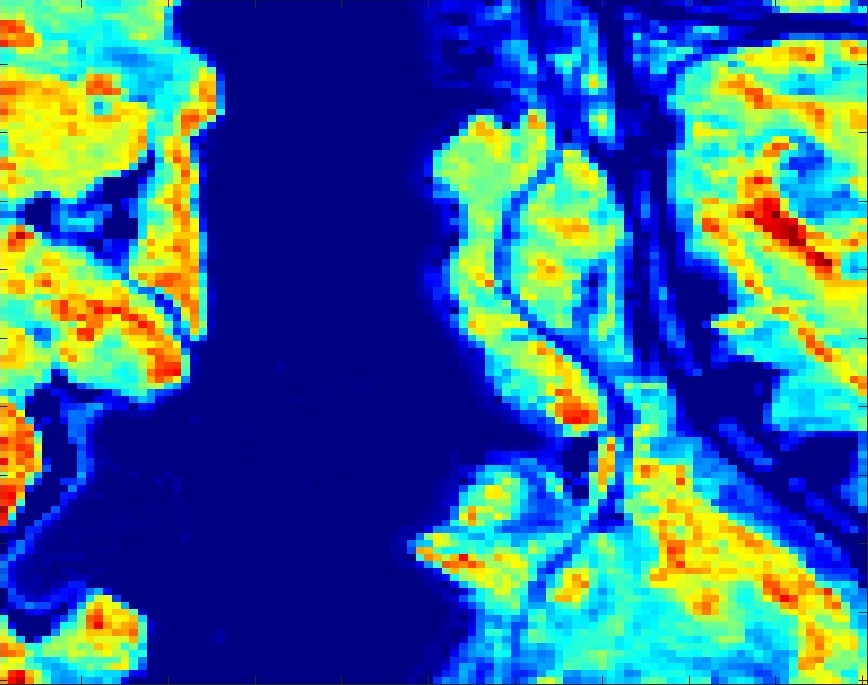}\\
\textbf{Soil} & \textbf{Water} & \textbf{Road} & \textbf{Trees}
\end{tabular}
\caption{The separated materials of the Jasper Ridge dataset.}
\label{fig 8}
\end{figure}

\begin{figure}[H]
\centering
\begin{tabular}{ccc}
\includegraphics[width=0.2\linewidth]{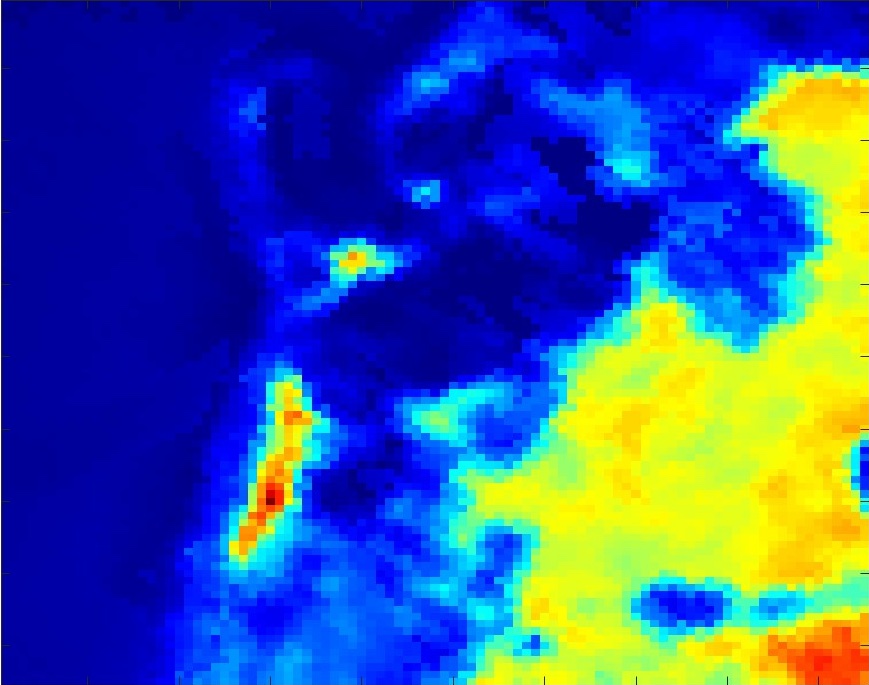}&
\includegraphics[width=0.2\linewidth]{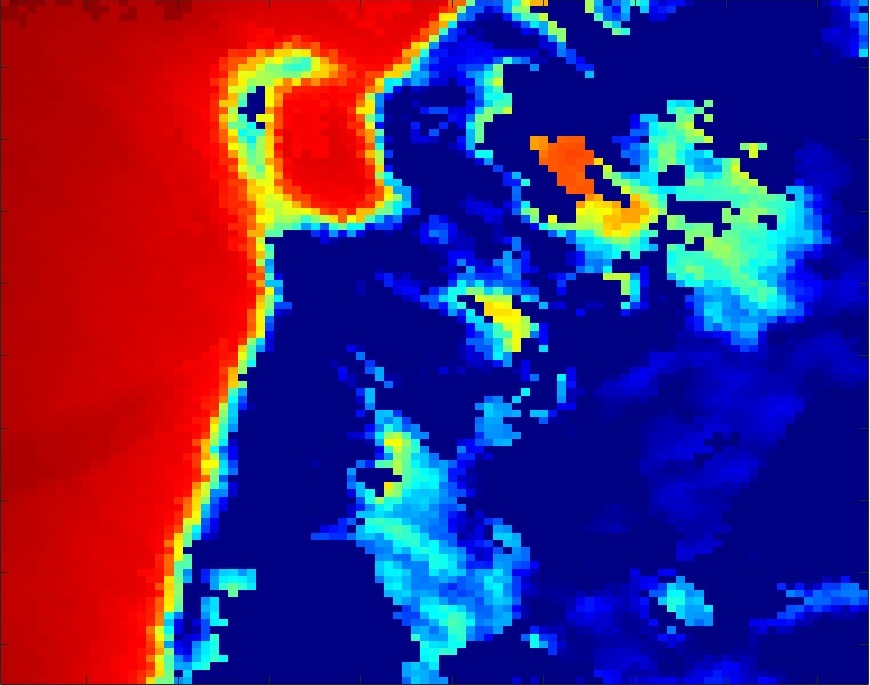}&
\includegraphics[width=0.2\linewidth]{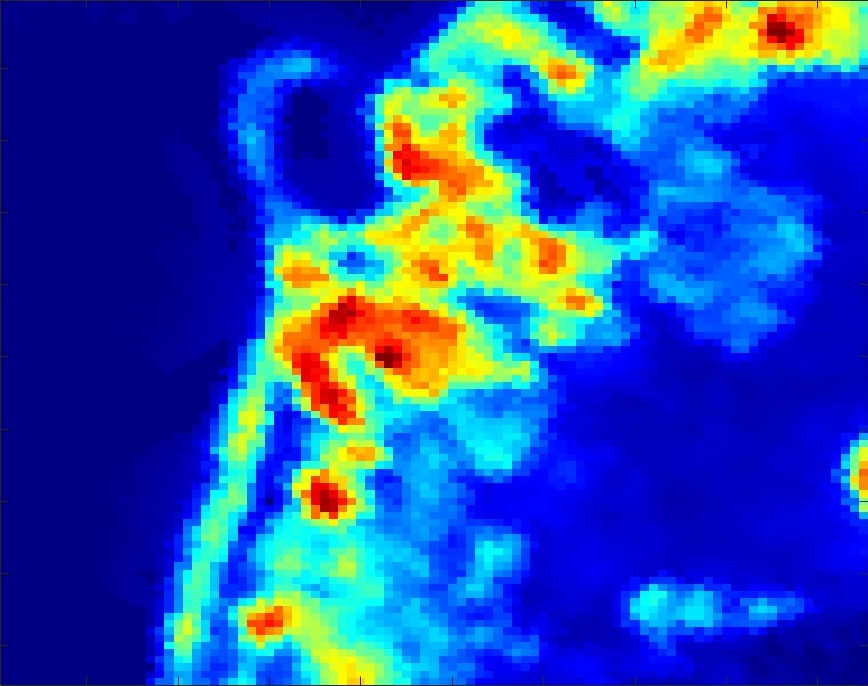}\\
\textbf{Soil} & \textbf{Water} & \textbf{Trees}
\end{tabular}
\caption{The separated materials of the Samson dataset.}
\label{fig 9}
\end{figure}

Figure \ref{fig 10} displays the results of the CNC, Chan \& Vese, ATCG-TV, and LR-CNC 
algorithms applied to the hyperspectral images Samson and Jasper Ridge. Both images are 
corrupted by Gaussian noise of level $0.3$. For the Samson image, we represent water in 
black, trees in gray with an intensity of $0.5$, and soil in white. For the Jasper Ridge 
image, water is represented in black, trees in gray with an intensity of $0.3$, soil in 
gray with an intensity of $0.7$, and roads are shown in white. Given the presence of three 
materials in the Samson test, we use $K=3$, while for the Jasper Ridge image, we use $K=4$ 
to represent the four materials.

\begin{figure}[H]
\centering
\begin{tabular}{cccccc}
\includegraphics[width=0.12\linewidth]{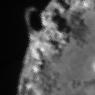}&
\includegraphics[width=0.12\linewidth]{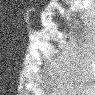}&
\includegraphics[width=0.12\linewidth]{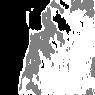}&
\includegraphics[width=0.12\linewidth]{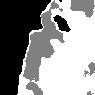}&
\includegraphics[width=0.12\linewidth]{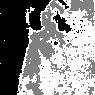}&
\includegraphics[width=0.12\linewidth]{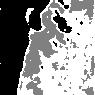}\\
\includegraphics[width=0.12\linewidth]{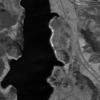}&
\includegraphics[width=0.12\linewidth]{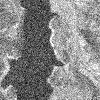}&
\includegraphics[width=0.12\linewidth]{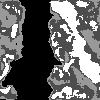}&
\includegraphics[width=0.12\linewidth]{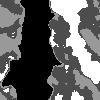}&
\includegraphics[width=0.12\linewidth]{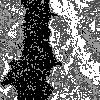}&
\includegraphics[width=0.12\linewidth]{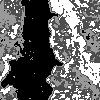}\\
\textbf{Original} &\textbf{Observed} & \textbf{Chan $\&$ Vese} &  \textbf{ATCG-TV} &\textbf{CNC} &\textbf{LR-CNC}
\end{tabular}
\caption{Results of the CNC, Chan \& Vese, ATCG-TV, and LR-CNC algorithms for the Samson 
and Jasper Ridge datasets for Gaussian noise level $0.3$ using $K=3$ and $K=4$, 
respectively.}
\label{fig 10}
\end{figure}

Figure \ref{fig 9} shows that the LR-CNC method identifies each material more distinctly 
than the other algorithms. Table \ref{tab 4} displays PSNR- and SSIM-values for the 
recovered data for the Samson and Jasper Ridge datasets for various Gaussian noise levels. 
Figures \ref{fig 10} and \ref{fig 11} depict relative differences \eqref{logdiff} and 
PSNR-values at each iteration of the algorithms for the Gaussian noise level $0.3$ for the
Samson and Jasper Ridge data sets, respectively.

\begin{table}[h]
\centering
\begin{tabular}{@{}l|l|l|l|l|l|l|l|l|l@{}}
\hline method  & & \multicolumn{2}{c|}{CNC} & \multicolumn{2}{|c|}{ATCG-TV} & \multicolumn{2}{|c}{Chan $\&$ Vese \cite{T.Chan}} & \multicolumn{2}{|c}{LR-CNC}  \\
\hline & $L$ & PSNR & SSIM & PSNR & SSIM & PSNR & SSIM & PSNR & SSIM\\
\hline \multirow{4}{*}{Samson}& 0.01 &26.87 & 0.51 & 29.04 & \bf{0.79} & \bf{29.61} & 
	0.70 & 28.02 & 0.75\\ 
 & 0.1 & 18.81 & 0.30 & 19.54 & 0.60 & 19.66 & 0.52 & \bf{21.38} & \bf{0.63}\\
 & 0.2 & 13.48 & 0.18 & 13.91 & 0.47 & 13.93 & 0.41 & \bf{18.22} & \bf{0.53}\\
 & 0.3 & 10.16 & 0.13 & 10.45 & 0.39 & 10.46 & 0.34 & \bf{15.66} & \bf{0.51} \\
\hline \multirow{4}{*}{Jasper} & 0.01 & 24.88 & 0.51 & 25.28 & 0.60& 26.16 & 0.62 & 
	\bf{27.01}& \bf{0.68}\\
 & 0.1 & 18.40 & 0.39 & 18.96 & 0.50 & 19.30 & 0.49 & \bf{22.54} & \bf{0.57} \\
 & 0.2 & 13.44 & 0.32 & 13.74 & 0.42 & 13.87 & 0.40 & \bf{20.54} & \bf{0.48}\\
 & 0.3 & 10.17 & 0.26 & 10.35 & 0.36 & 10.44 & 0.35  & \bf{18.62}& \bf{0.43}\\
\hline
\end{tabular}
\caption{PSNR- and SSIM-values of segmented images by CNC, Chan \& Vese, ATCG-TV, and 
LR-CNC algorithms.}
\label{tab 4}
\end{table}

\begin{figure}[H]
\centering
\begin{tabular}{cc}
\includegraphics[width=0.4\linewidth]{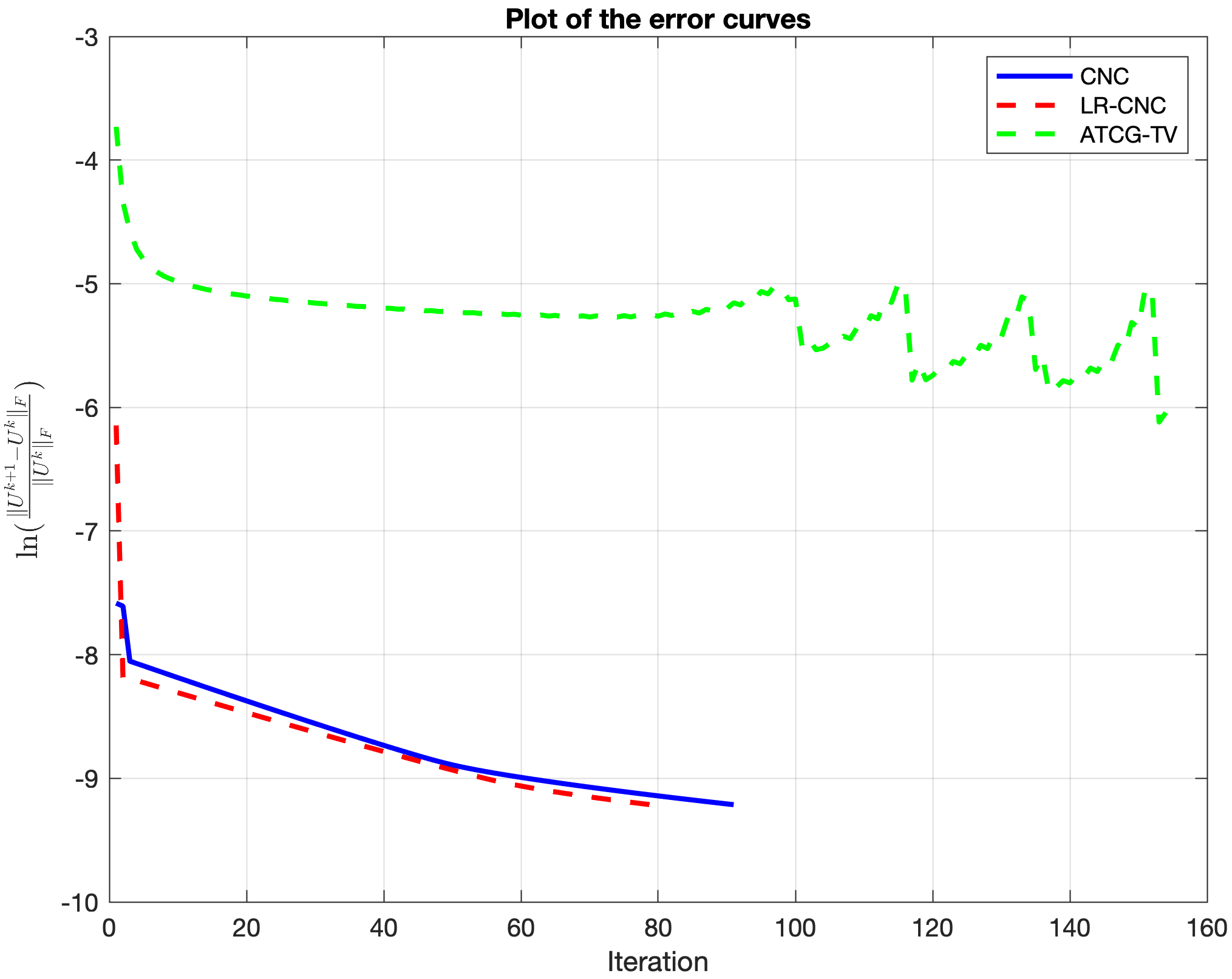}   &
\includegraphics[width=0.4\linewidth]{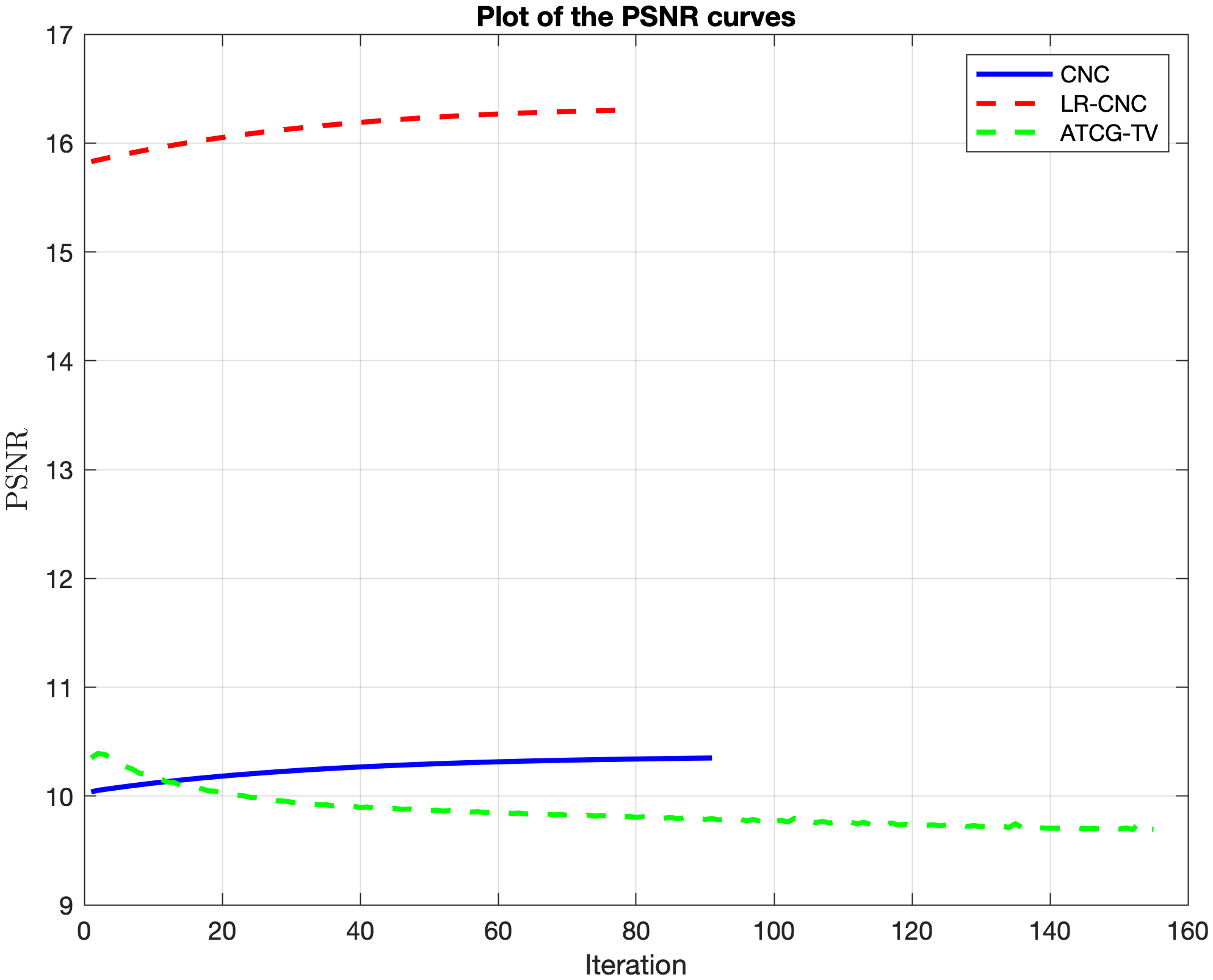}
\end{tabular}
\caption{The relative differences \eqref{logdiff} and PSNR-values as functions of the 
iteration number for the Jasper Ridge image with Gaussian noise level $0.3$.}
\label{fig 11}
\end{figure}

Table \ref{tab 4} illustrates that, at a low Gaussian noise level of $0.01$, the 
performance of all algorithms is comparable, with only slight differences in achieved 
PSNR- and SSIM-values. However, for noise levels $0.1$ and larger, the LR-CNC algorithm 
yields segmentations with significantly larger PSNR- and SSIM-values. Furthermore, the 
graphs of Figures \ref{fig 10} and \ref{fig 11} demonstrate the convergence of the LR-CNC
algorithm to be more regular and faster.

\section{Conclusion}\label{sec 5}
In this work, we proposed a novel approach to consider convex-non-convex optimization 
problems with focus on promoting low-rank solutions. By combining low-rank regularization 
with a non-convex penalty, the method achieves reconstructions that are both accurate and 
computationally efficient. This is supported by measured quality metrics. The optimization
problems are solved with the Alternating Direction Method of Multipliers (ADMM), and we 
provide an analysis that establishes the convergence properties of the LR-CNC algorithm. 
Extensive numerical experiments illustrate the effectiveness of this algorithm. The LR-CNC
framework balances the robustness of convex models and the flexibility of non-convex 
formulations. This makes the LR-CNC algorithm a promising tool for image segmentation and
completion tasks.\\

The authors declare that there is no conflict of interest.

%
%%************* REFERENCES*****************
\bibliographystyle{model1b-num-names}

\begin{appendices}

\section{Proof of Proposition \ref{penultprop}}. \label{app A}
From Theorem \ref{thm3.12}, we have $U^*=Z^*$ and $DU^*=M^*$. Therefore,
\[
\widetilde{Q}^{k+1}=\widetilde{Q}^{k} +\beta_1 \left(D\widetilde{U}^{k+1} - 
\widetilde{M}^{k+1}\right),\quad 
\widetilde{O}^{k+1}=\widetilde{O}^k + \beta_2 \left( \widetilde{U}^{k+1}-
\widetilde{Z}^{k+1} \right).
\]
Using \eqref{innerprod} we obtain the equations
\begin{eqnarray*}
	\begin{cases}
		\left\Vert\widetilde{Q}^{k}\right\Vert_F^2-\left\Vert\widetilde{Q}^{k+1}\right\Vert_F^2=
		-2\beta_1\langle \widetilde{Q}^k, D\widetilde{U}^{k+1} - \widetilde{M}^{k+1}\rangle -
		\beta_1^2 \left\Vert D\widetilde{U}^{k+1} - \widetilde{M}^{k+1}\right\Vert _F^2,\\
		\left\Vert\widetilde{O}^{k}\right\Vert_F^2-\left\Vert\widetilde{O}^{k+1} \right\Vert_F^2=
		-2\beta_2\langle \widetilde{O}^k, \widetilde{U}^{k+1} - \widetilde{Z}^{k+1}\rangle -
		\beta_2^2 \left\Vert \widetilde{U}^{k+1} - \widetilde{Z}^{k+1}\right\Vert _F^2.
	\end{cases}
\end{eqnarray*}
Adding these equations, we get
\begin{eqnarray*}
	&&\dfrac{1}{2\beta_1}\left( \left\Vert \widetilde{Q}^{k} \right\Vert_F^2- \left\Vert 
	\widetilde{Q}^{k+1} \right\Vert_F^2 \right) + \dfrac{1}{2\beta_1}\left( \left\Vert 
	\widetilde{O}^{k} \right\Vert_F^2- \left\Vert \widetilde{O}^{k+1} \right\Vert_F^2 \right) \\
	&&= -\langle \widetilde{Q}^k, D\widetilde{U}^{k+1} - \widetilde{M}^{k+1}\rangle - 
	\dfrac{\beta_1}{2} \left\Vert D\widetilde{U}^{k+1} - \widetilde{M}^{k+1}\right\Vert _F^2
	-\langle \widetilde{O}^k, \widetilde{U}^{k+1} - \widetilde{Z}^{k+1}\rangle - 
	\dfrac{\beta_2}{2} \left\Vert \widetilde{U}^{k+1} - \widetilde{Z}^{k+1}\right\Vert _F^2.
\end{eqnarray*}
At the $k$th step with $k\geq 1$, Algorithm \ref{Alg 1} yields relations that are 
analogous to \eqref{condition optimality U}, \eqref{condition optimality Z}, and 
\eqref{condition optimality M}, namely, 
\begin{eqnarray}\label{condition optimality Uk}
	& &F(U)-F(U^{k+1}) -\dfrac{\beta_1'}{2}\left\Vert DU -M^{k}\right\Vert_F^2\\
	&+&\dfrac{\beta_1'}{2}\left\Vert DU^{k+1} -M^{k}\right\Vert_F^2 - 
	\dfrac{\beta_2'}{2}\left\Vert Z^{k} -U \right\Vert_F^2 + 
	\dfrac{\beta_2'}{2}\left\Vert Z^{k} -U^{k+1} \right\Vert_F^2\nonumber\\
	&+& \langle D^TQ^{k} + O^{k} + (\beta_1+\beta_1')(D^TDU^{k+1} -D^TM^{k}) + 
	(\beta_2 + \beta_2')(U^{k+1} -Z^{k}), U-U^{k+1}\rangle \geq 0,\;
	\forall U\in \mathbb{R}^{n_1 \times n_2},\nonumber \\[2mm]
	\label{condition optimality Zk}
	& & \left\Vert Z \right\Vert_* - \left\Vert Z^{k+1}\right\Vert_* + 
	\dfrac{\beta_2''}{2}\left\Vert Z-U^{k+1} \right\Vert_F^2 - 
	\dfrac{\beta_2''}{2}\left\Vert Z^{k+1}-U^{k+1} \right\Vert_F^2 \\
	&+&\langle -O^{k} +(\beta_2 -\beta_2'')\left(Z^{k+1}-U^{k+1}\right),Z-Z^{k+1}\rangle
	\geq 0,\; \forall Z\in \mathbb{R}^{n_1 \times n_2},\nonumber \\[2mm]
	\label{condition optimality Mk}
	& & R\left( M \right) - R\left( M^{k+1} \right) + \dfrac{\beta_1''}{2}
	\left\Vert DU^{k+1}-M \right\Vert_F^2 - 
	\dfrac{\beta_1''}{2}\left\Vert DU^{k+1}-M^{k+1} \right\Vert_F^2 \\
	&+& \langle -Q^{k}
	+(\beta_1 -\beta_1'')\left(M^{k+1}-DU^{k+1}\right),M-M^{k+1}\rangle \geq 0,\; 
	\forall M\in \mathbb{R}^{2n_1 \times n_2},\nonumber
\end{eqnarray}
where $\beta_1',\, \beta_2',\, \beta_1''$, and $\beta_2''$ satisfy the relations
\eqref{cond 3.10}.

Substituting $U=U^{k+1}$ into \eqref{condition optimality U} and $U=U^*$ into
\eqref{condition optimality Uk}, and adding the equations so obtained, we get
\begin{eqnarray}\label{eneq 1}
	\beta_1\langle \widetilde{M}^{k}, D\widetilde{U}^{k+1} \rangle + 
	\beta_2 \langle \widetilde{Z}^{k},\widetilde{U}^{k+1} \rangle - 
	\langle \widetilde{Q}^{k}, D\widetilde{U}^{k+1}\rangle - 
	\langle \widetilde{O}^{k}, \widetilde{U}^{k+1}\rangle\\ 
	-(\beta_1 +\beta_1')\left\Vert D\widetilde{U}^{k+1} \right\Vert_F^2 - 
	(\beta_2 +\beta_2')\left\Vert \widetilde{U}^{k+1} \right\Vert_F^2 \geq 0.\nonumber
\end{eqnarray}
Similarly, we substitute $Z=Z^{k+1}$ into \eqref{condition optimality Z}, $Z=Z^*$ into
\eqref{condition optimality Zk}, $M=M^{k+1}$ into \eqref{condition optimality Z}, and 
$M=M^*$ into \eqref{condition optimality Zk}, and obtain
\begin{eqnarray}
	\label{eneq 2}
	&&-(\beta_2 -\beta_2'')\left\Vert \widetilde{Z}^{k+1} \right\Vert_F^2 + 
	\beta_2\langle \widetilde{U}^{k+1},\widetilde{Z}^{k+1} \rangle + 
	\langle \widetilde{O}^{k}, \widetilde{Z}^{k+1}\rangle \geq 0,\\
	\label{eneq 3}
	&& -(\beta_1 -\beta_1'')\left\Vert \widetilde{M}^{k+1} \right\Vert_F^2 + 
	\beta_1\langle D\widetilde{U}^{k+1},\widetilde{M}^{k+1} \rangle + 
	\langle \widetilde{Q}^{k}, \widetilde{M}^{k+1}\rangle \geq 0.
\end{eqnarray}
Adding \eqref{eneq 1} to \eqref{eneq 2} gives
\begin{eqnarray}\label{eneq 3b}
	& & \langle \widetilde{O}^{k},\widetilde{U}^{k+1}-\widetilde{Z}^{k+1} \rangle + 
	\beta_2\langle \widetilde{Z}^{k}-\widetilde{Z}^{k+1},\widetilde{U}^{k+1}\rangle+
	\beta_1\langle \widetilde{M}^{k},D\widetilde{U}^{k+1}\rangle - 
	\langle \widetilde{Q}^{k},D\widetilde{U}^{k+1}\rangle  \\
	&-& \left((\beta_1+\beta_1')\left\Vert D\widetilde{U}^{k+1} \right\Vert_F^2 + 
	(\beta_2+\beta_2')\left\Vert \widetilde{U}^{k+1} \right\Vert_F^2 + (\beta_2 
	-\beta_2')\left\Vert \widetilde{Z}^{k+1} \right\Vert_F^2 
	-2\beta_2 \langle\widetilde{U}^{k+1},\widetilde{Z}^{k+1}\rangle \right)\geq 0.\nonumber
\end{eqnarray}
Moreover, addition of \eqref{eneq 1}, \eqref{eneq 3}, and \eqref{eneq 3b} yields
\begin{eqnarray}\label{eq 3.22}
	&-&\langle \widetilde{Q}^{k},D\widetilde{U}^{k+1}-\widetilde{M}^{k+1} \rangle + 
	\beta_1\langle D\widetilde{U}^{k+1},\widetilde{M}^{k}-\widetilde{M}^{k+1} \rangle - 
	\dfrac{\beta_1+\beta_3}{2}\left\Vert D\widetilde{U}^{k+1}-
	\widetilde{M}^{k+1}\right\Vert_F^2\\
	&-&\langle \widetilde{O}^{k}, \widetilde{U}^{k+1}-\widetilde{Z}^{k+1} \rangle + 
	\beta_2 \langle \widetilde{U}^{k+1},\widetilde{Z}^{k}-\widetilde{Z}^{k+1} \rangle - 
	\dfrac{\beta_2+\beta_4}{2}\left\Vert\widetilde{U}^{k+1}-\widetilde{Z}^{k+1}\right\Vert_F^2
	\nonumber\\
	&-&\left( 
	\left( \dfrac{\beta_1}{2}-\dfrac{\beta_3}{2}+\beta_1' \right)\left\Vert D
	\widetilde{U}^{k+1}\right\Vert_F^2 + \left( \beta_3-\beta_1 \right)
	\langle D\widetilde{U}^{k+1},\widetilde{M}^{k+1} \rangle + 
	\left( \dfrac{\beta_1}{2}-\dfrac{\beta_3}{2}-\beta_1''\right)\left\Vert 
	\widetilde{M}^{k+1}\right\Vert_F^2 \right)\nonumber\\
	&-&\left( 
	\left(\dfrac{\beta_2}{2}-\dfrac{\beta_4}{2}+\beta_2'\right)\left\Vert 
	\widetilde{U}^{k+1}\right\Vert_F^2 + \left( \beta_4 -\beta_2 \right) \langle 
	\widetilde{U}^{k+1},\widetilde{Z}^{k+1} \rangle + \left( \dfrac{\beta_2}{2}-
	\dfrac{\beta_4}{2}-\beta_2'' \right)\left\Vert \widetilde{Z}^{k+1} \right\Vert_F^2
	\right)\geq 0,\nonumber
\end{eqnarray}
where $\beta_3,\beta_4>0$. 

We would like to show that the third and the fourth terms of the above inequality are 
equal to, respectively, 
\[
-\left\Vert c_1 D\widetilde{U}^{k+1}-c_2\widetilde{M}^{k+1} \right\Vert_F^2 
\mbox{~~~and~~~} 
-\left\Vert c_3 \widetilde{U}^{k+1}-c_4 \widetilde{Z}^{k+1} \right\Vert_F^2,
\]
for some nonnegative coefficients $c_1,c_2,c_3,c_4$. This requires that the coefficients 
of $\left\Vert D\widetilde{U}^{k+1} \right\Vert_F^2$, 
$\left\Vert \widetilde{M}^{k+1}\right\Vert_F^2$, 
$\left\Vert \widetilde{U}^{k+1} \right\Vert_F^2$, and 
$\left\Vert \widetilde{Z}^{k+1} \right\Vert_F^2$ are nonnegative, i.e., that
\begin{equation}\label{eq beta''}
	\beta_1'\geq \dfrac{\beta_3}{2}-\dfrac{\beta_1}{2},~~ 
	\beta_1''\leq \dfrac{\beta_1}{2}-\dfrac{\beta_3}{2},~~ 
	\beta_2'\geq \dfrac{\beta_4}{2}-\dfrac{\beta_2}{2},~~ 
	\beta_2''\leq \dfrac{\beta_2}{2}-\dfrac{\beta_4}{2}.
\end{equation}
By also considering the conditions \eqref{cond 3.10} on $\beta_1',\,\beta_2',\,\beta_1''$, 
and $\beta_2''$, we obtain 
\[
0\leq \beta_2''<\dfrac{\beta_2}{2}-\dfrac{\beta_4}{2},\, a\leq\beta_1''<
\dfrac{\beta_1}{2}-\dfrac{\beta_3}{2},\, 4\beta_3 -4\beta_1 +\beta_4 -\beta_2 \leq
8\beta_1'+\beta_2\leq \lambda.
\]
It follows from the first and the second inequalities that 
\[
\beta_2>\beta_4,~~ \beta_1>2a +\beta_3.
\]
We also have the relations
\[
(\beta_3-\beta_1)=2c_1c_2,~~ (\beta_2-\beta_4)=2c_3c_4,
\]
which give
\[
(\beta_1-\beta_3)=2\sqrt{\left(\dfrac{\beta_1}{2}-\dfrac{\beta_3}{2}+
	\beta_1'\right)\left(\dfrac{\beta_1}{2}-\dfrac{\beta_3}{2}-\beta_1''\right)},~~
(\beta_2-\beta_4)=2\sqrt{\left(\dfrac{\beta_2}{2}-\dfrac{\beta_4}{2}+
	\beta_2'\right)\left(\dfrac{\beta_2}{2}-\dfrac{\beta_4}{2}-\beta_2''\right)}.
\]
We obtain
\[
\beta_1=\beta_3- 2\dfrac{\beta_1'\beta_1''}{\beta_1''-\beta_1'},\; 
\beta_2=\beta_4-2\dfrac{\beta_2'\beta_2''}{\beta_2''-\beta_2'}.
\]
Since $\beta_1-\beta_3>2\beta_1''$ and $\beta_2-\beta_4>2\beta_2''$, it follows that we 
can have a solution only if $\beta_1'>\beta_1''$ and $\beta_2'>\beta_2''$.  Imposing the 
conditions
\[
\beta_1''=\dfrac{a}{\rho_1-1},\quad \beta_1'=\dfrac{\rho_1a}{\rho_1-1},\quad 
\beta_2''=\dfrac{\lambda-8a}{\rho_2-1},\quad 
\beta_2'=\dfrac{\rho_2(\lambda-8a)}{\rho_2-1},
\]
we get
\[
\beta_1=\beta_3+2\dfrac{a\rho_1}{(\rho_1-1)^2}, \quad 
\beta_2=\beta_4 +2\dfrac{\rho_2(\lambda-8a)}{(\rho_2-1)^2}.
\]

We now investigate if $\beta_3$ and $\beta_4$ admit some solutions. Considering the above 
two equations and the inequalities \eqref{cond 3.9}, we have 
\begin{equation}\label{eq beta3beta4}
	\beta_3\geq \dfrac{a}{(\rho_1-1)}\dfrac{(-\rho_1-1)}{(\rho_1-1)},\quad
	\beta_4\leq \dfrac{\rho_2(\lambda-8a)}{(\rho_2-1)^2}\left(\rho_2-3\right).
\end{equation}
Since $\beta_1$ and $\beta_2$ are positive, $\beta_1$ accepts solutions for every 
$\beta_3>0$ and $\beta_2$ accepts solutions only if
\[
\rho_2>3.
\]
Letting
\[
\beta_1''=\dfrac{a}{\rho_1-1},\quad \beta_1'=\dfrac{\rho_1a}{\rho_1-1},\quad 
\beta_2''=\dfrac{\lambda-8a}{\rho_2-1},\quad 
\beta_2'=\dfrac{\rho_2(\lambda-8a)}{\rho_2-1},
\]
we find that the third and the fourth terms of \eqref{eq 3.22} can be written as 
\[
-\left\Vert c_1 D\widetilde{U}^{k+1}-c_2\widetilde{M}^{k+1} \right\Vert_F^2,\qquad
-\left\Vert c_3 \widetilde{U}^{k+1}-c_4 \widetilde{Z}^{k+1} \right\Vert_F^2
\]
with
\begin{eqnarray}\label{eq c}
	c_1=\sqrt{\dfrac{\beta_1-\beta_3}{2}+\dfrac{\rho_1 a}{(\rho_1-1)}},~~
	c_2=\sqrt{\dfrac{\beta_1-\beta_3}{2}-\dfrac{a}{(\rho_1-1)}},\\
	c_3=\sqrt{\dfrac{\beta_2-\beta_4}{2}+
		\dfrac{\rho_2(\lambda -8a)}{(\rho_2-1)}},~~ c_4=\sqrt{\dfrac{\beta_2-\beta_4}{2}-
		\dfrac{(\lambda -8a)}{(\rho_2-1)}}.
\end{eqnarray}
It follows that $c_1>c_2$ and $c_3>c_4$. We will use these inequalities below.

Inequality \eqref{eq 3.22} can be written as
\begin{eqnarray*}
	&-&\langle \widetilde{Q}^{k},D\widetilde{U}^{k+1}-\widetilde{M}^{k+1} \rangle + 
	\beta_1\langle D\widetilde{U}^{k+1},\widetilde{M}^{k}-\widetilde{M}^{k+1} \rangle - 
	\dfrac{\beta_1+\beta_3}{2}\left\Vert D\widetilde{U}^{k+1}-
	\widetilde{M}^{k+1}\right\Vert_F^2\\
	&-&\langle \widetilde{O}^{k}, \widetilde{U}^{k+1}-\widetilde{Z}^{k+1} 
	\rangle + \beta_2 \langle \widetilde{U}^{k+1},\widetilde{Z}^{k}-\widetilde{Z}^{k+1} 
	\rangle - \dfrac{\beta_2+\beta_4}{2}\left\Vert \widetilde{U}^{k+1}-\widetilde{Z}^{k+1}
	\right\Vert_F^2\\
	&-&\left\Vert c_1 D\widetilde{U}^{k+1}-c_2\widetilde{M}^{k+1} \right\Vert_F^2-\left\Vert 
	c_3 \widetilde{U}^{k+1}-c_4 \widetilde{Z}^{k+1} \right\Vert_F^2\geq 0,
\end{eqnarray*}
which is equivalent to
\begin{eqnarray*}
	&-&\langle \widetilde{Q}^{k},D\widetilde{U}^{k+1}-\widetilde{M}^{k+1} \rangle  - 
	\dfrac{\beta_1+\beta_3}{2}\left\Vert D\widetilde{U}^{k+1}-\widetilde{M}^{k+1}
	\right\Vert_F^2 -\langle \widetilde{O}^{k}, \widetilde{U}^{k+1}-\widetilde{Z}^{k+1} 
	\rangle\\
	&-& \dfrac{\beta_2+\beta_4}{2}\left\Vert \widetilde{U}^{k+1}-\widetilde{Z}^{k+1}
	\right\Vert_F^2\\
	&\geq&\left\Vert c_1 D\widetilde{U}^{k+1}-c_2\widetilde{M}^{k+1} \right\Vert_F^2+
	\left\Vert c_3 \widetilde{U}^{k+1}-c_4 \widetilde{Z}^{k+1} \right\Vert_F^2 -\beta_1
	\langle D\widetilde{U}^{k+1},\widetilde{M}^{k}-\widetilde{M}^{k+1} \rangle - 
	\beta_2\langle \widetilde{U}^{k+1},\widetilde{Z}^{k}-\widetilde{Z}^{k+1}\rangle.
\end{eqnarray*}
This implies 
\begin{eqnarray*}
	&-&\langle \widetilde{Q}^{k},D\widetilde{U}^{k+1}-\widetilde{M}^{k+1} \rangle  - 
	\dfrac{\beta_1}{2}\left\Vert D\widetilde{U}^{k+1}-\widetilde{M}^{k+1}\right\Vert_F^2
	-\langle \widetilde{O}^{k}, \widetilde{U}^{k+1}-\widetilde{Z}^{k+1} \rangle  - 
	\dfrac{\beta_2}{2}\left\Vert \widetilde{U}^{k+1}-\widetilde{Z}^{k+1}\right\Vert_F^2\\
	&\geq&\left\Vert c_1 D\widetilde{U}^{k+1}-c_2\widetilde{M}^{k+1} \right\Vert_F^2+
	\left\Vert c_3 \widetilde{U}^{k+1}-c_4 \widetilde{Z}^{k+1} \right\Vert_F^2 -
	\beta_1\langle D\widetilde{U}^{k+1},\widetilde{M}^{k}-\widetilde{M}^{k+1} \rangle - 
	\beta_2 \langle \widetilde{U}^{k+1},\widetilde{Z}^{k}-\widetilde{Z}^{k+1} \rangle \\
	&+& \dfrac{\beta_3}{2}\left\Vert D\widetilde{U}^{k+1}-\widetilde{M}^{k+1}\right\Vert_F^2 +
	\dfrac{\beta_4}{2}\left\Vert \widetilde{U}^{k+1}-\widetilde{Z}^{k+1}\right\Vert_F^2,
\end{eqnarray*}
which gives
\begin{eqnarray*}
	& & \dfrac{1}{2\beta_1} \left(\left\Vert \widetilde{Q}^{k} \right\Vert_F^2 - 
	\left\Vert \widetilde{Q}^{k+1} \right\Vert_F^2\right)+\dfrac{1}{2\beta_2} 
	\left(\left\Vert \widetilde{O}^{k} \right\Vert_F^2 - \left\Vert \widetilde{O}^{k+1} 
	\right\Vert_F^2\right)\\
	&\geq&\left\Vert c_1 D\widetilde{U}^{k+1}-c_2\widetilde{M}^{k+1} \right\Vert_F^2+
	\left\Vert c_3 \widetilde{U}^{k+1}-c_4 \widetilde{Z}^{k+1} \right\Vert_F^2 -
	\beta_1\langle D\widetilde{U}^{k+1},\widetilde{M}^{k}-\widetilde{M}^{k+1} \rangle - 
	\beta_2 \langle \widetilde{U}^{k+1},\widetilde{Z}^{k}-\widetilde{Z}^{k+1} \rangle\\
	&+& \dfrac{\beta_3}{2}\left\Vert D\widetilde{U}^{k+1}-\widetilde{M}^{k+1}\right\Vert_F^2 +
	\dfrac{\beta_4}{2}\left\Vert \widetilde{U}^{k+1}-\widetilde{Z}^{k+1}\right\Vert_F^2.
\end{eqnarray*}
This concludes the proof.
\vskip5pt
%\end{proof}

%\section*{Appendix}

\section{Proof of Proposition \ref{lastprop}}.
\label{app B}
According Proposition \ref{penultprop}, we have 
\begin{eqnarray*}
	& & \dfrac{1}{2\beta_1} \left(\left\Vert \widetilde{Q}^{k} \right\Vert_F^2 - 
	\left\Vert \widetilde{Q}^{k+1} \right\Vert_F^2\right)+\dfrac{1}{2\beta_2} 
	\left(\left\Vert \widetilde{O}^{k} \right\Vert_F^2 - 
	\left\Vert \widetilde{O}^{k+1} \right\Vert_F^2\right)\\
	&\geq&\left\Vert c_1 D\widetilde{U}^{k+1}-c_2\widetilde{M}^{k+1} \right\Vert_F^2+
	\left\Vert c_3 \widetilde{U}^{k+1}-c_4 \widetilde{Z}^{k+1} \right\Vert_F^2 -
	\beta_1\langle D\widetilde{U}^{k+1},\widetilde{M}^{k}-\widetilde{M}^{k+1} \rangle - 
	\beta_2 \langle \widetilde{U}^{k+1},\widetilde{Z}^{k}-\widetilde{Z}^{k+1} \rangle,
	\nonumber\\
	&+& \dfrac{\beta_3}{2}\left\Vert D\widetilde{U}^{k+1}-\widetilde{M}^{k+1}\right\Vert_F^2 
	+ \dfrac{\beta_4}{2}\left\Vert \widetilde{U}^{k+1}-\widetilde{Z}^{k+1}\right\Vert_F^2.
	\nonumber
\end{eqnarray*}
We now seek to determine lower bounds for 
$-\beta_1\langle D\widetilde{U}^{k+1},\widetilde{M}^{k}-\widetilde{M}^{k+1} \rangle$ and
$ - \beta_2 \langle \widetilde{U}^{k+1},\widetilde{Z}^{k}-\widetilde{Z}^{k+1}\rangle$. We
have 
\begin{eqnarray*}
	\langle \widetilde{M}^{k}-\widetilde{M}^{k+1} ,D\widetilde{U}^{k+1}\rangle&=& 
	\langle \widetilde{M}^{k}-\widetilde{M}^{k+1}, D\widetilde{U}^{k+1} -
	D\widetilde{U}^{k}\rangle +  \langle \widetilde{M}^{k}-\widetilde{M}^{k+1}, 
	D\widetilde{U}^{k} - \widetilde{M}^{k}\rangle\\
	&+&\langle \widetilde{M}^{k}-\widetilde{M}^{k+1}, \widetilde{M}^{k}\rangle
\end{eqnarray*}
and 
\begin{eqnarray*}
	\langle \widetilde{Z}^{k}-\widetilde{Z}^{k+1},\widetilde{U}^{k+1}\rangle &=& 
	\langle \widetilde{Z}^{k}-\widetilde{Z}^{k+1}, \widetilde{U}^{k+1}- \widetilde{U}^{k} 
	\rangle +  \langle \widetilde{Z}^{k}-\widetilde{Z}^{k+1}, \widetilde{U}^{k}- 
	\widetilde{Z}^{k} \rangle\\
	&+&  \langle \widetilde{Z}^{k}-\widetilde{Z}^{k+1}, \widetilde{Z}^{k} \rangle.
\end{eqnarray*}
Moreover,
\begin{equation*}
	\begin{cases}
		\langle \widetilde{M}^{k}-\widetilde{M}^{k+1}, \widetilde{M}^{k}\rangle=
		\dfrac{1}{2}\left( \left\Vert\widetilde{M}^{k} \right\Vert_F^2 - 
		\left\Vert \widetilde{M}^{k+1}\right\Vert_F^2 - \left\Vert \widetilde{M}^{k+1}-
		\widetilde{M}^{k} \right\Vert_F^2 \right),\\
		\langle \widetilde{Z}^{k}-\widetilde{Z}^{k+1}, \widetilde{Z}^{k}\rangle=
		\dfrac{1}{2}\left( \left\Vert\widetilde{Z}^{k} \right\Vert_F^2 - \left\Vert 
		\widetilde{Z}^{k+1}\right\Vert_F^2 - \left\Vert \widetilde{Z}^{k+1}-\widetilde{Z}^{k} 
		\right\Vert_F^2 \right).
	\end{cases}
\end{equation*}
From the construction of $M^{k}$, we get
\begin{eqnarray} \label{eq 3.28}
	&&R\left( M \right) - R\left( M^{k} \right) + 
	\dfrac{\beta_1''}{2}\left\Vert DU^{k}-M \right\Vert_F^2 -
	\dfrac{\beta_1''}{2}\left\Vert DU^{k}-M^{k} \right\Vert_F^2 \\
	&+& \langle -Q^{k-1}+(\beta_1-\beta_1'')\left(M^{k}-DU^{k}\right),M-M^{k}\rangle\geq 0,\; 
	\forall M\in \mathbb{R}^{2n_1 \times n_2},\nonumber
\end{eqnarray}
and from the construction of $Z^k$, we obtain 
\begin{eqnarray}\label{eq 3.29}
	&&\left\Vert Z \right\Vert_* - \left\Vert Z^{k}\right\Vert_* + 
	\dfrac{\beta_2''}{2}\left\Vert Z-U^{k} \right\Vert_F^2 - 
	\dfrac{\beta_2''}{2}\left\Vert Z^{k}-U^{k} \right\Vert_F^2 \\
	&+& \langle -O^{k-1} +(\beta_2 -\beta_2'')\left(Z^{k}-U^{k}\right),Z-Z^{k}\rangle 
	\geq 0,\; \forall Z\in \mathbb{R}^{n_1 \times n_2}.\nonumber
\end{eqnarray}
Replacing $M$ with $M^{k+1}$ in \eqref{eq 3.28}, $M$ with $M^{k}$ in 
\eqref{condition optimality Mk}, summing the two inequalities, and proceeding similarly
when replacing $Z$ with $Z^{k+1}$ in \eqref{eq 3.29} and $Z$ with $Z^{k}$ in 
\eqref{condition optimality Zk}, yields
\begin{equation*}
	\langle \widetilde{Q}^{k}-\widetilde{Q}^{k-1},\widetilde{M}^{k+1}-\widetilde{M}^{k}\rangle
	- (\beta_1-\beta_1'')\left\Vert \widetilde{M}^{k+1}-\widetilde{M}^k\right\Vert_F^2 + 
	\beta_1 \langle D\widetilde{U}^{k+1}-D\widetilde{U}^{k},\widetilde{M}^{k+1}-
	\widetilde{M}^{k} \rangle\geq 0
\end{equation*}
and 
\begin{equation*}
	\langle \widetilde{O}^{k}-\widetilde{O}^{k-1},\widetilde{Z}^{k+1}-\widetilde{Z}^{k}\rangle 
	- (\beta_2-\beta_2'')\left\Vert \widetilde{Z}^{k+1}-\widetilde{Z}^k\right\Vert_F^2 + 
	\beta_2 \langle \widetilde{U}^{k+1}-\widetilde{U}^{k},\widetilde{Z}^{k+1}-
	\widetilde{Z}^{k} \rangle\geq 0.
\end{equation*}
Since 
\[
\begin{cases}
	\widetilde{Q}^{k}-\widetilde{Q}^{k-1}=Q^{k}-Q^{k-1}=\beta_1(DU^k-M^k),\\
	\widetilde{O}^{k}-\widetilde{O}^{k-1}=O^{k}-O^{k-1}=\beta_2(U^k-Z^k),
\end{cases}
\]
it follows that
\begin{equation*}
	\begin{cases}
		\langle D\widetilde{U}^k-\widetilde{M}^k,\widetilde{M}^{k+1}-\widetilde{M}^{k} \rangle  +
		\langle D\widetilde{U}^{k+1}-D\widetilde{U}^{k},\widetilde{M}^{k+1}-\widetilde{M}^{k} 
		\rangle\geq \dfrac{(\beta_1-\beta_1'')}{\beta_1}\left\Vert \widetilde{M}^{k+1}-
		\widetilde{M}^k\right\Vert_F^2,\\
		\langle \widetilde{U}^k-\widetilde{Z}^k,\widetilde{Z}^{k+1}-\widetilde{Z}^{k} \rangle + 
		\langle \widetilde{U}^{k+1}-\widetilde{U}^{k},\widetilde{Z}^{k+1}-\widetilde{Z}^{k} 
		\rangle\geq \dfrac{(\beta_2-\beta_2'')}{\beta_2 }\left\Vert \widetilde{Z}^{k+1}-
		\widetilde{Z}^k\right\Vert_F^2.
	\end{cases}
\end{equation*}
Therefore,
\[
-\langle D\widetilde{U}^{k+1},\widetilde{M}^{k}-\widetilde{M}^{k+1} \rangle\geq 
\dfrac{1}{2}\left( -\left\Vert\widetilde{M}^{k} \right\Vert_F^2 + 
\left\Vert \widetilde{M}^{k+1}\right\Vert_F^2 + 
\left\Vert \widetilde{M}^{k+1}-\widetilde{M}^{k} \right\Vert_F^2 \right) 
+\dfrac{(\beta_1-\beta_1'')}{\beta_1}\left\Vert \widetilde{M}^{k+1}-
\widetilde{M}^k\right\Vert_F^2 
\]
and 
\[
-\langle \widetilde{U}^{k+1},\widetilde{Z}^{k}-\widetilde{Z}^{k+1}\rangle\geq  
\dfrac{1}{2}\left( -\left\Vert\widetilde{Z}^{k} \right\Vert_F^2 + 
\left\Vert \widetilde{Z}^{k+1}\right\Vert_F^2 + \left\Vert \widetilde{Z}^{k+1}-
\widetilde{Z}^{k} \right\Vert_F^2 \right) + \dfrac{(\beta_2-\beta_2'')}{\beta_2 }
\left\Vert \widetilde{Z}^{k+1}-\widetilde{Z}^k\right\Vert_F^2.
\]
Consequently,
\begin{eqnarray*}
	& & \dfrac{1}{2\beta_1} \left(\left\Vert \widetilde{Q}^{k} \right\Vert_F - 
	\left\Vert \widetilde{Q}^{k+1} \right\Vert_F\right)+\dfrac{1}{2\beta_2} 
	\left(\left\Vert \widetilde{O}^{k} \right\Vert_F - \left\Vert \widetilde{O}^{k+1} 
	\right\Vert_F\right)\\
	&\geq&\left\Vert c_1 D\widetilde{U}^{k+1}-c_2\widetilde{M}^{k+1} \right\Vert_F^2+
	\left\Vert c_3 \widetilde{U}^{k+1}-c_4 \widetilde{Z}^{k+1} \right\Vert_F^2 +
	\dfrac{\beta_3}{2}\left\Vert D\widetilde{U}^{k+1}-\widetilde{M}^{k+1}\right\Vert_F^2 + 
	\dfrac{\beta_4}{2}\left\Vert \widetilde{U}^{k+1}-\widetilde{Z}^{k+1}\right\Vert_F^2,\\
	&+& \dfrac{\beta_1}{2}\left( -\left\Vert\widetilde{M}^{k} \right\Vert_F^2 + 
	\left\Vert \widetilde{M}^{k+1}\right\Vert_F^2\right) +(\dfrac{3\beta_1}{2}-\beta_1'')
	\left\Vert \widetilde{M}^{k+1}-\widetilde{M}^k\right\Vert_F^2 \\
	&+& \dfrac{\beta_2}{2}\left( -\left\Vert\widetilde{Z}^{k} \right\Vert_F^2 + 
	\left\Vert \widetilde{Z}^{k+1}\right\Vert_F^2\right) + (\dfrac{3\beta_2}{2}-\beta_2'')
	\left\Vert \widetilde{Z}^{k+1}-\widetilde{Z}^{k} \right\Vert_F^2.  
\end{eqnarray*}
This completes the proof.
\vskip5pt

\section{Proof of Theorem \ref{thmcnv}} 
\label{app C}
It follows from Proposition \ref{lastprop} that
\begin{eqnarray}
	& & \dfrac{1}{2\beta_1} \left(\left\Vert \widetilde{Q}^{k} \right\Vert_F - \left\Vert \widetilde{Q}^{k+1} \right\Vert_F\right)+\dfrac{1}{2\beta_2} \left(\left\Vert \widetilde{O}^{k} \right\Vert_F - \left\Vert \widetilde{O}^{k+1} \right\Vert_F\right)\\
	&\geq&\left\Vert c_1 D\widetilde{U}^{k+1}-c_2\widetilde{M}^{k+1} \right\Vert_F^2+\left\Vert c_3 \widetilde{U}^{k+1}-c_4 \widetilde{Z}^{k+1} \right\Vert_F^2 + \dfrac{\beta_3}{2}\left\Vert D\widetilde{U}^{k+1}-\widetilde{M}^{k+1}\right\Vert_F^2 + \dfrac{\beta_4}{2}\left\Vert \widetilde{U}^{k+1}-\widetilde{Z}^{k+1}\right\Vert_F^2 ,\nonumber\\
	&+& \dfrac{\beta_1}{2}\left( -\left\Vert\widetilde{M}^{k} \right\Vert_F^2 + \left\Vert \widetilde{M}^{k+1}\right\Vert_F^2\right)  +(\dfrac{3\beta_1}{2}-\beta_1'')\left\Vert \widetilde{M}^{k+1}-\widetilde{M}^k\right\Vert_F^2 \nonumber\\
	&+& \dfrac{\beta_2}{2}\left( -\left\Vert\widetilde{Z}^{k} \right\Vert_F^2 + \left\Vert \widetilde{Z}^{k+1}\right\Vert_F^2\right) + (\dfrac{3\beta_2}{2}-\beta_2'')\left\Vert \widetilde{Z}^{k+1}-\widetilde{Z}^{k} \right\Vert_F^2,  \nonumber
\end{eqnarray}
with $\left(\beta_1'',\beta_2''\right)$, $\left(\beta_3,\beta_4\right)$, and 
$(c_1,c_2,c_3,c_4)$ satisfying. respectively, \eqref{eq beta''}, \eqref{eq beta3beta4}, 
and \eqref{eq c}. Thus, we have
\begin{eqnarray}
	& &\left( \dfrac{1}{2\beta_1}\left\Vert \widetilde{Q}^{k} \right\Vert_F^2 + 
	\dfrac{1}{2\beta_2}\left\Vert \widetilde{O}^{k} \right\Vert_F^2 +
	\dfrac{\beta_1}{2}\left\Vert \widetilde{M}^{k} \right\Vert_F^2 +
	\dfrac{\beta_2}{2}\left\Vert \widetilde{Z}^{k} \right\Vert_F^2 \right)\nonumber \\
	&-& \left( \dfrac{1}{2\beta_1}\left\Vert \widetilde{Q}^{k+1} \right\Vert_F^2 + \dfrac{1}{2\beta_2}\left\Vert \widetilde{O}^{k+1} \right\Vert_F^2 +\dfrac{\beta_1}{2}\left\Vert \widetilde{M}^{k+1} \right\Vert_F^2 + \dfrac{\beta_2}{2}\left\Vert \widetilde{Z}^{k+1} \right\Vert_F^2 \right)\\
	&\geq& \left\Vert c_1 D\widetilde{U}^{k+1}-c_2\widetilde{M}^{k+1} \right\Vert_F^2+\left\Vert c_3 \widetilde{U}^{k+1}-c_4 \widetilde{Z}^{k+1} \right\Vert_F^2 + \dfrac{\beta_3}{2}\left\Vert D\widetilde{U}^{k+1}-\widetilde{M}^{k+1}\right\Vert_F^2 + \dfrac{\beta_4}{2}\left\Vert \widetilde{U}^{k+1}-\widetilde{Z}^{k+1}\right\Vert_F^2 \nonumber\\
	&+&  (\dfrac{3\beta_1}{2}-\beta_1'')\left\Vert \widetilde{M}^{k+1}-\widetilde{M}^k\right\Vert_F^2 \nonumber\\
	&+&    (\dfrac{3\beta_2}{2}-\beta_2'')\left\Vert \widetilde{Z}^{k+1}-\widetilde{Z}^{k} \right\Vert_F^2\geq 0. 
\end{eqnarray}
Introduce the sequence 
\[
x_k=\left( \dfrac{1}{2\beta_1}\left\Vert \widetilde{Q}^{k} \right\Vert_F^2 + 
\dfrac{1}{2\beta_2}\left\Vert \widetilde{O}^{k} \right\Vert_F^2 +
\dfrac{\beta_1}{2}\left\Vert \widetilde{M}^{k} \right\Vert_F^2 +
\dfrac{\beta_2}{2}\left\Vert \widetilde{Z}^{k} \right\Vert_F^2 \right),\quad k=1,2,\ldots~.
\]
This sequence is bounded and decreasing, and therefore it converges. This implies that \begin{eqnarray*}
	& &\left\Vert c_1 D\widetilde{U}^{k+1}-c_2\widetilde{M}^{k+1} \right\Vert_F^2+\left\Vert c_3 \widetilde{U}^{k+1}-c_4 \widetilde{Z}^{k+1} \right\Vert_F^2 + \dfrac{\beta_3}{2}\left\Vert D\widetilde{U}^{k+1}-\widetilde{M}^{k+1}\right\Vert_F^2 + \dfrac{\beta_4}{2}\left\Vert \widetilde{U}^{k+1}-\widetilde{Z}^{k+1}\right\Vert_F^2 \\
	& &+  (\dfrac{3\beta_1}{2}-\beta_1'')\left\Vert \widetilde{M}^{k+1}-\widetilde{M}^k\right\Vert_F^2
	+    (\dfrac{3\beta_2}{2}-\beta_2'')\left\Vert \widetilde{Z}^{k+1}-\widetilde{Z}^{k} \right\Vert_F^2
\end{eqnarray*}
converges to $0$. This implies that the sequences
\[
\{\widetilde{Q}^{k}\}_{k\geq 0},~~ \{\widetilde{O}^{k}\}_{k\geq 0},~~
\{\widetilde{M}^{k}\}_{k\geq 0},~~ \{\widetilde{Z}\}_{k\geq 0},
\]
are bounded and, therefore, the sequences
\[
\{Q^{k}\}_{k\geq 0},~~\{O^{k}\}_{k\geq 0},~~\{M^{k}\}_{k\geq 0},~~\{Z\}_{k\geq 0}
\]
also are bounded. Moreover, the sequences
\[
\{D\widetilde{U}^{k}\}_{k\geq 1},~~ \{\widetilde{U}^{k}\}_{k\geq 1},~~ 
(\{DU^{k}\}_{k\geq 1},~~ \{U^{k}\}_{k\geq 1} )
\]
are bounded. We conclude that
\begin{eqnarray}
	&\displaystyle{\lim_{k\to\infty}}& \left\Vert \widetilde{Z}^{k+1}-\widetilde{Z}^{k} \right\Vert_F^2=
	\lim_{k\to\infty} \left\Vert Z^{k+1}-Z^{k} \right\Vert_F^2=0, \nonumber\\
	&\displaystyle{\lim_{k\to\infty}}& \left\Vert \widetilde{M}^{k+1}-\widetilde{M}^{k} \right\Vert_F^2=
	\lim_{k\to\infty} \left\Vert M^{k+1}-M^{k} \right\Vert_F^2=0, \nonumber\\
	&\displaystyle{\lim_{k\to\infty}}& \left\Vert c_1 D\widetilde{U}^{k+1}-c_2\widetilde{M}^{k+1} 
	\right\Vert_F^2=0,\label{end 3}\\
	&\displaystyle{\lim_{k\to\infty}}& \left\Vert c_3 \widetilde{U}^{k+1}-c_4\widetilde{Z}^{k+1} 
	\right\Vert_F^2=0, \label{end 4}\\
	&\displaystyle{\lim_{k\to\infty}}& \left\Vert D\widetilde{U}^{k+1}-\widetilde{M}^{k+1} 
	\right\Vert_F^2=0, \label{end 5}\\
	&\displaystyle{\lim_{k\to\infty}}& \left\Vert \widetilde{U}^{k+1}-\widetilde{Z}^{k+1} \right\Vert_F^2=0. 
	\label{end 6}
\end{eqnarray}

As we have discussed, $c_1,c_2,c_3,c_4>0$, with $c_1\neq c_2$ and $c_3\neq c_4$. Furthermore,
$c_1>c_2$ and $c_3>c_4$. We can see that
\begin{equation}
	(c_1^2-c_2^2)\| D\widetilde{U}^{k+1} \|_F^2 \leq \| c_1 D\widetilde{U}^{k+1}-
	c_2 \widetilde{M}^{k+1}  \|_F^2 + \| c_2 \widetilde{M}^{k+1} - c_2 D\widetilde{U}^{k+1} 
	\|_F^2 \label{last 1}
\end{equation}
and 
\begin{equation}
	(c_3^2-c_4^2)\| \widetilde{U}^{k+1} \|_F^2 \leq \| c_3 \widetilde{U}^{k+1}-c_4 
	\widetilde{Z}^{k+1}  \|_F^2 + \| c_4 \widetilde{Z}^{k+1} - c_4 \widetilde{U}^{k+1} \|_F^2.
	\label{last 2}
\end{equation}
Moreover, we have 
\begin{equation}
	\| \widetilde{Z}^{k+1} \|_F\leq \| \widetilde{Z}^{k+1}-\widetilde{U}^{k+1} \|_F +
	\| \widetilde{U}^{k+1} \|_F, \;\;  \| \widetilde{M}^{k+1} \|_F\leq \| \widetilde{M}^{k+1}-
	D\widetilde{U}^{k+1} \|_F + \| D\widetilde{U}^{k+1} \|_F. \label{last 3}
\end{equation}
Consequently, from the inequalities in \eqref{last 1}, \eqref{last 2}, and \eqref{last 3},
and by using \eqref{end 3} and \eqref{end 5}, as well as from \eqref{end 4} and 
\eqref{end 6}, we obtain 
\begin{eqnarray*}
	\lim_{k\to \infty}M^{k}=M^*,~~\lim_{k\to \infty}Z^{k}=Z^*=U^*,~~
	\lim_{k\to \infty}U^{k}=U^*,~~\lim_{k\to \infty}DU^{k}=DU^*=M^*.
\end{eqnarray*}
This concludes the proof.
\end{appendices}
\end{document}